\documentclass[reqno,12pt,twosided,a4paper]{amsart}
\usepackage{amssymb,amsfonts,latexsym,mathrsfs,amsmath}
\usepackage[all]{xy}
\usepackage{array}

\usepackage{amstext}
\usepackage{amssymb}
\usepackage{amsfonts}
\usepackage{amsthm}
\usepackage{amsopn}
\usepackage{amsbsy}
\usepackage{layout}
\usepackage{color}


\newtheorem{theorem}{Theorem}[section]
\newtheorem{lemma}[theorem]{Lemma}
\newtheorem{proposition}[theorem]{Proposition}

\theoremstyle{definition}
\newtheorem{definition}[theorem]{Definition}

\numberwithin{equation}{section}

\newtheorem{question}[theorem]{Question}
\newtheorem{conjecture}[theorem]{Conjecture}
\newtheorem{remark}[theorem]{Remark}


\newcommand{\Z}{\mathbb{Z}}
\newcommand{\Q}{\mathbb{Q}}

\newcommand{\N}{\mathbb{N}}

\newcommand{\Ow}{\mathcal{O}}

\newcommand{\ot}{\otimes}

\newcommand{\ti}{\widetilde}
\newcommand{\ga}{\gamma}
\newcommand{\Ga}{\Gamma}

\newcommand{\A}{{\mathcal{A}}}
\newcommand{\ra}{\rightarrow}
\newcommand{\wb}{\overline}

\newcommand{\alH}{^{\alpha}H}
\newcommand{\alHal}{^{\alpha}H^{\alpha^{-1}}}
\newcommand{\tiS}{\ti{S}}
\newcommand{\YD}{\mathcal{YD}}
\newcommand{\cdota}{\cdot_{\alpha}}
\newcommand{\tb}{\textbf}
\newcommand{\ep}{\epsilon}
\newcommand{\la}{\lambda}
\newcommand{\allH}{\!^{\alpha}H}
\newcommand{\Aa}{\mathbb{A}}
\newcommand{\Hom}{\text{Hom}}
\newcommand{\End}{\text{End}}
\newcommand{\T}{\text{T}}
\newcommand{\B}{\text{B}}

\author{Ehud Meir}
\address{Institute of Mathematics, University of Aberdeen, Aberdeen, UK}
\email{meirehud@gmail.com}
\title[cocycles and invariant theory]{Hopf cocycle deformations and invariant theory}
\begin{document}
\maketitle
\begin{abstract}
For a given finite dimensional Hopf algebra $H$
we describe the set of all equivalence classes of cocycle deformations of $H$ as an affine variety, using methods of geometric invariant theory. 
We show how our results specialize to the Universal Coefficients Theorem in the case of a group algebra, 
and we also give examples from other families of Hopf algebras, including dual group algebras and Bosonizations of Nichols algebras. 
In particular, we use the methods developed here to classify the cocycle deformations of a dual pointed Hopf algebra associated to the symmetric group on three letters.
We also give an example of a cocycle deformation over a dual group algebra, which has only rational invariants, but which is not definable over the rational field.
This differs from the case of group algebras, in which every 2-cocycle is equivalent to one which is definable by its invariants.
\end{abstract}
\begin{section}{Introduction}
Let $H$ be a finite dimensional Hopf algebra defined over a field $K$. 
A \textit{Hopf 2-cocycle} (or simply 2-cocycle) on $H$ is a convolution invertible map $\alpha:H\ot H\to K$ which satisfy a certain associativity condition.
A \textit{cocycle deformation} of $H$ is then an associative $H$-comodule algebra of the form $^{\alpha}H$, where $\alpha$ is some 2-cocycle.
This algebra has the underlying vector space of $H$, and the multiplication in $\alH$ is given by the formula
$$x\cdota y = \alpha(x_1,y_1)x_2y_2.$$ The associativity of this algebra is equivalent to the associativity equations $\alpha$ satisfies. 
The coaction of $H$, $\rho:\allH\to \allH\ot H$ is given by the coproduct of $H$.
We will identify henceforth between 2-cocycles and the cocycle deformations they define. Thus, two 2-cocycles $\alpha$ and $\alpha'$ will be considered equivalent if and only if $^{\alpha'}H\cong \allH$ as $H$-comodule algebras.

2-cocycles appear abundantly in the theory of Hopf algebras. The algebra $\alH$, for example, can be seen as Hopf-Galois extension of the ground field $K$ (see Theorem 3.8. in \cite{Montgomery2}).
From the non-commutative geometry point of view, they can be thought of as principal bundles over a point 
(which, in the non-commutative case have some very nontrivial examples, see \cite{Schneider}).
From the categorical perspective such a structure is equivalent to a fiber functor on the category of $H$-comodules.

Hopf 2-cocycles can also be used to deform the multiplication of $H$ to form a new Hopf algebra, $^{\alpha}H^{\alpha^{-1}}$. The Hopf algebras of the form $^{\alpha}H^{\alpha^{-1}}$ are exactly the Hopf algebras whose category of comodules is equivalent to that of $H$. 
Such deformations appear abundantly in the classification of both semisimple and non semisimple Hopf algebras.

On the semisimple side, classification results are achieved by comparing the categories of modules or the categories of comodules of different Hopf algebras, and using the theory of Deligne of symmetric monoidal categories (see \cite{EG3}). 
On the non-semisimple side, classification results are achieved mostly by using the lifting method (see for example \cite{AndSch}). This method deals with the classification of all Hopf algebras $H$ such that $grH \cong \B(V)\# H_0$, where $H_0$ is a semisimple Hopf algebra, usually a group algebra or a dual group algebra, $\B(V)$ is a Nichols algebra, and $grH$ is the graded Hopf algebra associated to the coradical filtration of $H$. It turns out that in many cases all such Hopf algebras arise from $\B(V)\# H_0$ via a cocycle deformation. In \cite{AndSch} Andruskiewitsch and Schneider classified all Hopf algebras whose cordical is an abelian group algebra, under some restrictions on the order of this group. Masuoka showed later in \cite{Masuoka2} that all such Hopf algebras are cocycle deformations of their associated graded Hopf algebras. This classification was completed by Angiono and Garcia Iglesias in \cite{AnG}, where it is also shown that all Hopf algebras whose coradical is an abelian group algebra are cocycle deformations of their associated graded Hopf algebras. For more classification results of non-semisimple Hopf algebras in which cocycle deformations appear, see \cite{AAIMV}, \cite{FGM}, \cite{GarMa}, \cite{GruMa} and \cite{AndVay1}.

The importance of cocycle deformations raises the natural question of classification of such structures, up to an appropriate isomorphism.
In case the Hopf algebra $H$ is semisimple, Ocneanu rigidity tells us that there are only finitely many 2-cocycles up to equivalence (see \cite{ENO}).
For a large class of Hopf algebras, the group theoretical Hopf algebras, such cocycles can be classified explicitly by group theoretical data (see \cite{ENO} and \cite{GalPla}).
The general classification of equivalence classes of 2-cocycles is in general open.

In the specific case where $H=KG$ is a group algebra of a finite group $G$, 
an equivalence class of a 2-cocycle on $H$ is the same thing as an element in the second cohomology group
$H^2(G,K^{\times})$. If $K$ is algebraically closed and of characteristic zero, the Universal Coefficients Theorem gives us an isomorphism $$\Phi:H^2(G,K^{\times})\cong \Hom(H_2(G,\Z),K^{\times}).$$
This means that for every $c\in H_2(G,\Z)$ and every $\alpha\in H^2(G,K^{\times})$ we can view $\Phi([\alpha])(c)\in K^{\times}$ as a \textit{scalar invariant }
of the cohomology class of $\alpha$. The above isomorphism tells us that this set of scalar invariants is a \textit{complete set} of scalar invariants for the cocycle $\alpha$.

Another place in which scalar invariants of 2-cocycles appear is in the classification results of non-semisimple Hopf algebras. Unlike the case of group algebras, the resulting invariants now vary continuously. See for example \cite{AndSch} and \cite{AnG}.

The goal of the present paper is to study this classification problem, for a general finite dimensional Hopf algebra over an algebraically closed field of characteristic zero, from a geometric point of view.
This will continue the study done in \cite{Meir2} and \cite{DKS} where geometric invariant theory was applied to study finite dimensional semisimple Hopf algebras.
We will show that for a given Hopf algebra $H$ over an algebraically closed field $K$ of characteristic $0$ the set of all equivalence classes of cocycle deformations $X_H$ has a natural structure of an affine algebraic variety. 
We will thus think of $X_H$ as the moduli space of all the equivalence classes of 2-cocycles on $H$. 
We will use methods of geometric invariant theory to construct this variety as a quotient of an affine variety by the action of some reductive group. 
The invariants and the variety that we will get here will generalize the invariants one receives for cocycles on group algebras from the Universal Coefficients Theorem on the one hand, and the continuous invariants which appear in the classification of non-semisimple Hopf algebras on the other hand. We will give examples for both.

To state the result, recall first that the algebra $\alH$ is an $H$-comodule algebra which is a Hopf-Galois extension of the ground field $K$ (see \cite{Montgomery2}). 
Such $H$-comodule algebras are characterized by the fact that the map $M:\allH\ot\allH\to\allH\ot H$ $x\ot y\mapsto x\cdota y_1\ot y_2$ is invertible (see Proposition \ref{prop:FormofW}).
Let us write $T:\allH\ot H\to \allH\ot\allH$ for the linear inverse of this map. For $h\in H$ we then write $T_h:\allH\to \allH\ot \allH$ $y\mapsto T(y\ot h)$.
For elements $h(1),\ldots, h(l)$ we write $T(h(1),\ldots, h(l)): W\to W^{\ot (l+1)}$ for the composition \begin{equation} T(h(1),\ldots, h(l)) = (Id_W^{\ot l-1}\ot T_{h(l)})\cdots(Id_W\ot T_{h(2)})T_{h(1)}.\end{equation}
For $f\in H^*$ we write $A_f:\allH\to \allH$ $x\mapsto x_1f(x_2)$ for the action of $H^*$ induced from the coaction of $H$.
For a permutation $\sigma\in S_{l+1}$ we write $L_{\sigma}:(\allH)^{\ot l+1}\to (\allH)^{\ot l+1}$ for the permutation of the tensor factors induced by $\sigma$.

The main result of this paper is the following theorem, which will be proven in Section \ref{sec:Invariants}:
\begin{theorem}\label{thm:main}
For every finite dimensional Hopf algebra $H$ the set $X_H$ of equivalence classes of cocycle deformations of $H$ has a natural structure of an affine variety, and we can therefore think of the elements in the coordinate ring $K[X_H]$ 
as a complete set of scalar invariants for 2-cocycles on $H$. The commutative algebra $K[X_H]$ has a set of generators of the following form:
$$c(l,\sigma,f,h(1),\ldots h(l)) := Tr_{W^{\ot l+1}}(A_fm^lL_{\sigma}T(h(1),\ldots h(l))),$$
where $l\in\mathbb{N}$, $f\in H^*$, $\sigma\in S_{l+1}$ and $h(i)\in H$. We call these elements the basic invariants of $\alH$. 
The relations these invariants satisfy are given explicitly in Section \ref{sec:Invariants}.
\end{theorem}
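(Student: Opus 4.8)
The plan is to realise $X_H$ as a quotient of an affine variety by a \emph{reductive} group, in the spirit of \cite{Meir2}, and then to read off generators and relations from the fundamental theorems of classical invariant theory. Fix a vector space $W$ with $\dim W=\dim H$ and consider the affine space parametrising a coaction $\rho\in\Hom(W,W\ot H)$, a multiplication $m\in\Hom(W\ot W,W)$ and a unit $u\in W$. Rather than fixing the comodule structure $\rho=\Delta$ — which would force us to work with the group of $H$-comodule automorphisms of the regular comodule, namely the convolution units of $H^*$, a group that is not reductive unless $H$ is cosemisimple — I would let $\rho$ vary and act by the full group $\mathrm{GL}(W)$, which is reductive. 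Two points then lie in the same orbit exactly when the associated comodule algebras are isomorphic, since any linear isomorphism intertwining the structure maps is, by definition, an isomorphism of $H$-comodule algebras.

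Let $Z$ be the closed subvariety cut out by the comodule-algebra axioms together with the Hopf–Galois condition, which I would encode as follows: record the inverse $T$ of the map $M$ of Proposition \ref{prop:FormofW} as extra coordinates and impose the closed equations $MT=\mathrm{Id}$ and $TM=\mathrm{Id}$. This realises the a priori open invertibility locus as a closed subvariety and makes the entries of $T$ regular functions on $Z$. The points corresponding to cocycle deformations are the cleft Galois objects, i.e. those for which $(W,\rho)\cong(H,\Delta)$ as comodules; this is a $\mathrm{GL}(W)$-stable locus containing the orbit of every $\alH$. Since $\mathrm{GL}(W)$ is reductive and $Z$ is affine, the categorical quotient $Z/\!\!/\mathrm{GL}(W)=\mathrm{Spec}\,K[Z]^{\mathrm{GL}(W)}$ is an affine variety, and $X_H$ is the image of the cleft locus inside it.

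The key point, and the step I expect to be the main obstacle, is that $K[Z]^{\mathrm{GL}(W)}$ must \emph{separate} the equivalence classes, i.e. the $\mathrm{GL}(W)$-orbit of each cleft point should be closed in $Z$, so that $X_H$ is a genuine orbit space and $K[X_H]$ a complete set of invariants. This is delicate precisely because when $H$ is not cosemisimple the regular comodule is not a semisimple $H^*$-module, so its orbit is not closed in the space of bare comodule structures; one must show that the additional cleft algebra structure rigidifies the situation and prevents any cocycle deformation from degenerating inside $Z$. I would establish this either by proving that the comodule-algebra automorphism group of $\alH$ is reductive and invoking the Matsushima–Luna criterion, or by a direct reconstruction argument recovering the class of $\alpha$ from the values of the invariants below.

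Granting closed orbits, the description of $K[X_H]$ follows from the first and second fundamental theorems for $\mathrm{GL}(W)$. Since $K[Z]$ is generated by the matrix entries of $m$, $\rho$, $u$ and $T$, the First Fundamental Theorem gives that $K[Z]^{\mathrm{GL}(W)}$ is generated by complete contractions of these tensors, that is, by traces of closed loops built from them. Writing $A_f=(\mathrm{id}_W\ot f)\rho$ for the copy of $\rho$ whose $H$-output is evaluated against $f$, feeding one element $h(i)$ into each of the $l$ copies of $T$ that assemble $T(h(1),\ldots,h(l))$, recombining the $l+1$ factors by the iterated multiplication $m^l$ and permuting them by $L_{\sigma}$, every such contraction takes exactly the form $c(l,\sigma,f,h(1),\ldots,h(l))$; this yields the claimed generating set, with $l$ counting the copies of $T$ and $\sigma\in S_{l+1}$ recording the pairing of tensor factors. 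Finally, the relations among these invariants are the images of the defining relations of $Z$ together with the relations predicted by the Second Fundamental Theorem — the antisymmetrisation identities in $\dim W$ indices — which is how the explicit relations of Section \ref{sec:Invariants} arise.
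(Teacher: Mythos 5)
Your construction is essentially the paper's own (Sections \ref{sec:Variety} and \ref{sec:Invariants}): parametrize the structure maps, record the inverse $T$ of the Galois map as extra coordinates so that invertibility becomes a closed condition, act by the reductive group $GL(W)$, and read off generators of the invariant ring from Schur--Weyl duality. The genuine gap sits exactly at the step you yourself flag as the main obstacle: you do not prove that the orbits are closed, and the first route you propose for doing so would fail. Matsushima's criterion relates reductivity of the stabilizer to \emph{affineness} of the orbit, not to closedness: the orbit of $1$ under the scaling action of $\mathbb{G}_m$ on $\mathbb{A}^1$ has trivial (hence reductive) stabilizer and is affine, yet is not closed. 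So proving that the automorphism group of a single $\alH$ is finite or reductive gives nothing by itself, and your second route (reconstructing the class of $\alpha$ from the invariants) is precisely the statement at stake, not an argument for it. What the paper actually does is global rather than pointwise: by Lemma \ref{lemma:FiniteAutomorphisms}, the stabilizer of \emph{every} point $(m,T,A)$ of $Y_H$ is the automorphism group of the corresponding comodule algebra, which is the character group of the finite dimensional Hopf algebra $\alHal$ and hence finite. Therefore all orbits have the same dimension $n^2=\dim GL(W)$, and since the closure of an orbit can only add orbits of strictly smaller dimension, every orbit is closed. This is the one idea your proposal is missing, and it is what makes $Y_H/GL(W)$ a genuine orbit space with $K[X_H]=K[Y_H]^{GL(W)}$ a complete set of invariants.

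Two further points. First, your claim that every complete contraction of the tensors $m$, $T$, $A$ ``takes exactly the form'' $c(l,\sigma,f,h(1),\ldots,h(l))$ is false as stated: the first fundamental theorem produces traces $Tr(L_{\sigma}(x_{i_1}\ot\cdots\ot x_{i_m}))$ with the structure tensors interleaved arbitrarily, and bringing these to the basic form requires the identities of Lemma \ref{lemma:Identities} (such as $A_fm=m(A_{f_1}\ot A_{f_2})$ and $(T_h\ot 1)T_g=(1\ot T_{g_2})T_{hg_1}$) together with a nontrivial rewriting argument, conjugating by the Galois map and pushing all the $A$'s to the right; the conclusion is that each fundamental invariant is a \emph{sum} of basic invariants, not literally one of them. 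Second, your caution about restricting to a ``cleft locus'' is unnecessary: by Proposition \ref{prop:FormofW}, for finite dimensional $H$ every nonzero comodule algebra of dimension $\dim H$ with invertible Galois map is already of the form $\alH$, so the whole variety consists of cocycle deformations and no image of a sublocus needs to be taken.
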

Notice that for the above theorem we do not require the Hopf algebra $H$ to be semisimple, even though the semisimplicity was a necessary condition to apply geometric invariant theory to Hopf algebras in \cite{DKS} and \cite{Meir2}.

The fact that we have a complete set of invariants gives us an indication about the fields of definition of $\alH$. 
We will prove in Section \ref{sec:Invariants} the following proposition (see Proposition \ref{prop:Galois}):
\begin{proposition}\label{prop:Galois-main}
Let $W=\allH$ be a cocycle deformation of $H$, defined over $K$.
Assume that $K/k$ is a Galois extension with Galois group $G$, and that $H$ has a $k$-form $H_k$ (i.e. $H\cong H_k\ot_k K$).
Let $L=k(c(l,\sigma,f,h(1),\ldots h(l)))\subseteq K$ be the subfield of $K$ generated by the basic invariants of $W$ (where we take here $h(i)\in H_k$ and $f\in H_k^*$). 
  Then for $\gamma\in G$ it holds that $^{\gamma}W\cong W$ if and only if $\gamma$ fixes the subfield $L$ of $K$ pointwise.
  In particular, a necessary condition for $W$ to be defined over a subfield $F\subseteq K$ is $L\subseteq F$.
 \end{proposition}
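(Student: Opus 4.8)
The plan is to derive the whole statement from the completeness of the basic invariants proved in Theorem~\ref{thm:main}: since the family $c(l,\sigma,f,h(1),\ldots,h(l))$ separates the equivalence classes of cocycle deformations, the isomorphism class of $W$ is completely recorded by the values these invariants take on $W$, so deciding whether ${}^{\gamma}W\cong W$ amounts to deciding whether $\gamma$ leaves all of these values unchanged. Writing $\underline h=(h(1),\ldots,h(l))$, the heart of the argument is the \emph{Galois equivariance}
\begin{equation}
c(l,\sigma,f,\underline h)({}^{\gamma}W)=\gamma\bigl(c(l,\sigma,f,\underline h)(W)\bigr)
\end{equation}
for all $f\in H_k^*$ and $h(i)\in H_k$, which links the Galois action on $K$ to the twisting $W\mapsto{}^{\gamma}W$, and which I would establish first.

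To prove the equivariance I would fix a $k$-basis of $H_k$, which is simultaneously a $K$-basis of $H=H_k\ot_k K$, let $\gamma$ act $K$-semilinearly on $W$ fixing this basis, and use that ${}^{\gamma}W$ is by definition the $H$-comodule algebra obtained by applying $\gamma$ to the structure constants of $W$ in this basis. I would then inspect the ingredients of $c(l,\sigma,f,\underline h)=Tr_{W^{\ot l+1}}(A_f m^l L_{\sigma}T(h(1),\ldots,h(l)))$ one at a time. Since the coaction of $W$ is the coproduct of $H$, which is defined over $k$, the operator $A_f$ for $f\in H_k^*$ has a matrix with entries in $k$; likewise $L_{\sigma}$ is defined over the prime field; and because the $h(i)$ lie in $H_k$, slotting them into $T$ via $T_{h(i)}(y)=T(y\ot h(i))$ introduces no new scalars. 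The only genuinely $K$-valued data are the matrices of the multiplication $m$ and of $T$, the linear inverse of the map $M$ from Proposition~\ref{prop:FormofW}; both $m$ and $M$ depend on $\alpha$. Passing from $W$ to ${}^{\gamma}W$ replaces the matrix of $m$ (resp.\ of $M$) by its entrywise $\gamma$-image, and since the entrywise action of a field automorphism commutes with matrix inversion, the matrix of $T$ is replaced by its $\gamma$-image as well; as it also commutes with composition and trace, the scalar $c(l,\sigma,f,\underline h)$ is replaced by its $\gamma$-image. This bookkeeping — verifying that $T$, obtained by inversion, transforms by $\gamma$ exactly like $m$, and that every remaining factor is truly $k$-rational once the arguments are taken in $H_k$ and $H_k^*$ — is the main obstacle, since it is precisely the point at which one must check that no hidden $K$-scalars slip in.

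With the equivariance in hand the rest is formal. By Theorem~\ref{thm:main} the invariants form a complete set, so ${}^{\gamma}W\cong W$ if and only if $c(l,\sigma,f,\underline h)({}^{\gamma}W)=c(l,\sigma,f,\underline h)(W)$ for all $l,\sigma$ and all $f\in H^*$, $h(i)\in H$. Each such invariant is $K$-multilinear in $(f,h(1),\ldots,h(l))$ and $H=H_k\ot_k K$, so it suffices to test this equality on arguments drawn from a $k$-basis, i.e.\ for $f\in H_k^*$ and $h(i)\in H_k$. For those arguments the equivariance rewrites the condition as $\gamma\bigl(c(l,\sigma,f,\underline h)(W)\bigr)=c(l,\sigma,f,\underline h)(W)$; that is, $\gamma$ fixes every generator of $L=k(c(l,\sigma,f,\underline h))$. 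Hence ${}^{\gamma}W\cong W$ if and only if $\gamma$ fixes $L$ pointwise.

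For the last assertion, suppose $W$ is defined over an intermediate field $k\subseteq F\subseteq K$, say $W\cong W_F\ot_F K$. Then $K/F$ is Galois with $\mathrm{Gal}(K/F)\le G$, and for every $\gamma\in\mathrm{Gal}(K/F)$ base change along $\gamma$ preserves the $F$-form, so ${}^{\gamma}W\cong W$. By the first part each such $\gamma$ fixes $L$, whence $L\subseteq\mathrm{Fix}(\mathrm{Gal}(K/F))=F$ by the fundamental theorem of Galois theory, giving the required necessary condition $L\subseteq F$.
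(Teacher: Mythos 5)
Your proposal is correct and follows essentially the same route as the paper: establish the Galois equivariance $c({}^{\gamma}W)=\gamma(c(W))$ for $k$-rational invariants by observing that the invariant is a polynomial with $k$-rational coefficients in the structure constants (the paper phrases this via "polynomials with rational coefficients", you phrase it by isolating the genuinely $K$-valued matrices $m$ and $T$ and noting $\gamma$ commutes with inversion, composition and trace), then conclude via completeness of the basic invariants (Theorem \ref{thm:main}) together with multilinearity in $(f,h(1),\ldots,h(l))$. The only differences are cosmetic: you spell out explicitly why $T$ transforms correctly under $\gamma$ and you supply the short Galois-theoretic argument for the final "in particular" clause, both of which the paper leaves implicit.
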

 
Theorem \ref{thm:main} describes the ring $K[X_H]$ in terms of generators and relations. 
In order to prove this theorem we will first describe, in Section \ref{sec:Variety}, the variety $X_H$ as a quotient of the form $Y_H/GL_n(K)$ where $Y_H$ is an affine variety and $n=dim(H)$.
We will then have an isomorphism $K[X_H]= K[Y_H]^{GL(W)}$, and we will calculate this ring of invariants explicitly in Section \ref{sec:Invariants}. 

Carrying out the analysis of the algebra of invariants for a given Hopf algebra $H$ can be quite difficult (the presentation which we will give here has in general infinitely many generators and infinitely many relations).
In order to overcome this problem we will combine certain results from \cite{Meir1}, where certain classification results for 2-cocycles were also obtained by using a categorical construction, 
and we will show some alternative ways to describe the variety $X_H$ and the ring $K[X_H]$, which will help us to simplify the calculations.
We shall explain how, in many examples, we can find a subvariety $Y'\subseteq Y_H$ and a subgroup $N\subseteq GL(W)$ which acts on $Y'$ such that the natural map $Y'/N\to Y/GL(W)$ is an isomorphism of varieties, or at least a bijection. 

In Section \ref{sec:Examples} we will give examples.
For group algebras we will show how the isomorphism arising from the Universal Coefficients Theorem can be seen in the framework of the variety $X_H$ presented here.
We will also describe the invariants one receives for 2-cocycles on the dual group algebra $K[G]$, and their relation to invariants of cocycles on group algebras. 
As an application, we will give an example of a group $G$ and a cocycle deformation on $K[G]$, 
for which all the basic invariants (with respect to the canonical basis of $K[G]$) are contained in $\Q$, but for which the cocycle itself is not definable over $\Q$. This shows that the necessary condition from Proposition \ref{prop:Galois-main} is not sufficient.
This is in contrary to the case of group algebras, where every cocycle is definable over the extension of $\Q$ generated by its scalar invariants (see \cite{AHN}). 
See also \cite{AK} for another approach to cocycle deformations and the possible ways to define them over subfields, using their graded identities.

In the non-semisimple realm we will study algebras of the form $H=H_0\# \B(V)$, the Radford biproduct, or Bosonizations, of a semisimple Hopf algebra $H_0$ and the Nichols algebra of a
vector space $V\in \!_{H_{0}}^{H_0}\YD$. We will concentrate on the case where $H_0$ is either a group algebra or a dual group algebra. 
We will consider the Taft Hopf algebras, and the Hopf algebras in which $H_0=KS_3$ or $K[S_3]$ and $V = (\Ow^3_2,-1)$ in the terminology of \cite{IM}.
We will use the methods developed here to show that $X_H$ is $\Aa^2$ in case $H_0=KS_3$ and $\Aa^3$ (up to a bijective correspondence) in case $H_0=K[S_3]$. 
The first case appears in \cite{IM}. For the second case, the resulting double-twisted Hopf algebras $\alHal$ appear in \cite{AndVay1}.

In all the examples checked so far the resulting space $X_H$ is a disjoint union of affine spaces. In Section \ref{sec:Questions} we will formulate a conjecture saying that this is always the case.
We will also raise the question about the possibility to reconstruct $\alH$ from its invariants. 
\end{section} 

\begin{section}{Preliminaries}\label{sec:Prelim}
\subsection{Hopf algebras and cocycle deformations}
Throughout this paper $H$ will be a finite dimensional Hopf algebra of dimension $n$ defined over an algebraically closed field $K$ of characteristic zero. 
We make these assumptions about the field to simplify the application of geometric invariant theory to our situation. 
We revise here some known facts about Hopf algebras and their cocycle deformations, and refer the reader to \cite{Montgomery} and the introduction of \cite{AndSch} for further reading. 
We will use here the Sweedler notation for the comultiplication in $H$:
\begin{equation}\Delta(x) = x_1\ot x_2.\end{equation} The counit of $H$ will be denoted by $\epsilon$.

Our main object of study in this paper will be cocycle deformations of $H$. 
We recall that a \textit{Hopf 2-cocycle} (or just 2-cocycle) on $H$ is a convolution invertible map $\alpha:H\ot H\to K$ which satisfies the associativity condition:
\begin{equation}\forall x,y,z\in H:\quad \alpha(x_1,y_1)\alpha(x_2y_2,z) = \alpha(y_1,z_1)\alpha(x,y_2z_2).\end{equation}
We will also assume here that our 2-cocycles satisfy the unity condition 
\begin{equation}\alpha(1,x)=\alpha(x,1)=\epsilon(x).\end{equation}
This assumption can be made because every 2-cocycle is equivalent to a 2-cocycle which satisfies this condition.
2-cocycles enable us to define new algebras. We define an algebra $\alH$ which has the underlying vector space $H$ and in which the multiplication is given by the formula:
\begin{equation}x\cdota y = \alpha(x_1,y_1)x_2y_2.\end{equation}
The conditions on $\alpha$ assure that the multiplication in $\alH$ is associative, and that $1\in\allH$ remains a unit for the twisted multiplication.
The algebra $\alH$ has a richer structure of an $H$-comodule algebra. That is: by identifying $\alH$ with $H$ as vector spaces,
the map $\Delta$ induces an algebra map $\rho:\allH\to \allH\ot H$ which furnishes an $H$-comodule structure on $\alH$. 

 
The algebra $\alH$ is an $H$-comodule algebra, but in general it is not a Hopf algebra by itself.
Indeed, the counit of a Hopf algebra provides us with an algebra homomorphism from the Hopf algebra into the ground field $K$, and $\alH$ will admit such a homomorphism if and only if $\alH\cong H$ as $H$-comodule algebras. 
However, one can construct a double-twisted Hopf algebra $\alHal$. This Hopf algebra has the underlying vector space $H$, has the same coalgebra structure as $H$, and a two-sided twisted multiplication, given by the formula
\begin{equation}x\cdot y  = \alpha(x_1,y_1)\alpha^{-1}(x_3,y_3)x_2y_2.\end{equation}
The unit and counit in this new Hopf algebra are the same as those in $H$. 
The antipode in this new algebra is more complicated, and is given by the formula
\begin{equation}S^{\alpha}(x) = \gamma(x_1)S(x_2)\gamma^{-1}(x_3)\end{equation} where $\gamma\in H^*$ is given by $\gamma(x) = \alpha(x_1,S(x_2))$.
We will prove later that the element $\gamma$ is really invertible, so this is well defined.
The notion of a 2-cocycle on a Hopf algebra is dual to the notion of a Drinfeld twist on a Hopf algebra. In other words, a 2-cocycle on $H$ is the same as a Drinfeld twist in $H^*\ot H^*$.

It is possible that different cocycles $\alpha$ and $\alpha'$ will define isomorphic cocycle deformations.
To see when this happens notice first that an isomorphism $\alH\to\! ^{\alpha'}H$ is in particular an isomorphism of $H$-comodules, and will therefore be of the form $x\mapsto \nu(x_1)x_2$
for some invertible $\nu\in H^*$ (this follows from the fact that $H$-comodules are the same as $H^*$-modules, and that $H$ is isomorphic with $H^*$ as $H^*$-modules).
It then follows that this map will be an isomorphism of algebras if and only if the equation
\begin{equation}\nu(x_1)\nu(y_1)\alpha'(x_2,y_2)\nu^{-1}(x_3y_3) = \alpha(x,y)\end{equation} holds.
This also gives us a description of the automorphism group of $\alH$ as an $H$-comodule algebra. 
Indeed, by the above formula we see that an invertible element $\nu\in H^*$ will define 
an automorphism $\alH\to \allH$ if and only if $\nu:\!\alHal\to K$ is an algebra homomorphism.
We conclude this discussion in the following lemma:
\begin{lemma}\label{lemma:FiniteAutomorphisms}
The automorphism group of $\alH$ as an $H$-comodule algebra is canonically isomorphic with the group of characters of the Hopf algebra $\alHal$.
In particular, since this algebra is finite dimensional, this group is finite.
\end{lemma}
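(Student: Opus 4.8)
The plan is to turn the two observations made just before the statement into a clean bijection and then verify it is a group isomorphism. First I would parametrise the $H$-comodule endomorphisms of $\alH$. For $\nu\in H^*$ set $\phi_\nu(x)=\nu(x_1)x_2$; since the coaction on $\alH$ is $\Delta$, a direct check shows each $\phi_\nu$ is a right $H$-comodule map, and conversely every comodule endomorphism $\phi$ equals $\phi_{\ep\circ\phi}$, as one sees by applying $\ep\ot Id$ to the comodule identity $\Delta\phi(x)=(\phi\ot Id)\Delta(x)$. Thus $\nu\mapsto\phi_\nu$ is a bijection between $H^*$ and the comodule endomorphisms, and a short computation gives $\phi_\nu\circ\phi_\mu=\phi_{\mu*\nu}$ and $\phi_\ep=Id$. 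Consequently $\phi_\nu$ is invertible exactly when $\nu$ is convolution invertible, so the $H$-comodule automorphisms of $\alH$ are precisely the $\phi_\nu$ with $\nu\in(H^*)^\times$.

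Next I would single out those $\phi_\nu$ that also respect the product $\cdota$. Expanding $\phi_\nu(x\cdota y)$ and $\phi_\nu(x)\cdota\phi_\nu(y)$ in Sweedler notation, multiplicativity becomes the $H$-valued identity $\alpha(x_1,y_1)\nu(x_2y_2)x_3y_3=\nu(x_1)\nu(y_1)\alpha(x_2,y_2)x_3y_3$; applying $\ep$ to the final tensor leg reduces this to the scalar identity
\begin{equation}\alpha(x_1,y_1)\nu(x_2y_2)=\nu(x_1)\nu(y_1)\alpha(x_2,y_2),\end{equation}
and conversely the scalar identity recovers the $H$-valued one upon summing its two sides against $x_3y_3$. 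I would then convolve by $\alpha^{-1}$ in the appropriate tensor legs and use $\alpha*\alpha^{-1}=\ep\ot\ep$ to rewrite it as
\begin{equation}\alpha(x_1,y_1)\alpha^{-1}(x_3,y_3)\nu(x_2y_2)=\nu(x)\nu(y),\end{equation}
which is exactly the assertion that $\nu$ is multiplicative for the product of $\alHal$, i.e. a character of $\alHal$ (and $\nu(1)=1$ is automatic once $\nu$ is convolution invertible). The passage through $\alpha*\alpha^{-1}=\ep\ot\ep$ is the technical heart of the argument and the step I expect to require the most care.

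Finally I would assemble the group isomorphism. The characters of $\alHal$ form a group under convolution, since $\alHal$ is a bialgebra with the coalgebra structure of $H$; the bijection $\nu\mapsto\phi_\nu$ carries this group onto the group of comodule-algebra automorphisms of $\alH$, intertwining convolution with the opposite of composition, so composing with inversion yields an honest group isomorphism, canonical in that it is built only from the structure maps. For the finiteness I would identify the characters of $\alHal$ with the group-like elements of the finite dimensional Hopf algebra $(\alHal)^*$; since group-like elements of a coalgebra are linearly independent, there are only finitely many, and the automorphism group is therefore finite.
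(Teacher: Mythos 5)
Your proof is correct and takes essentially the same route as the paper: parametrise the $H$-comodule automorphisms of $\alH$ as $x\mapsto\nu(x_1)x_2$ with $\nu\in(H^*)^{\times}$, then identify multiplicativity for $\cdota$ with the condition that $\nu$ is a character of $\alHal$ by convolving with $\alpha^{-1}$. One small wording point: the convolution invertibility of an arbitrary character of $\alHal$ (which you need so that $\phi_\nu$ is bijective, i.e.\ for the inverse direction of the correspondence) comes from the antipode $S^{\alpha}$, so it uses that $\alHal$ is a Hopf algebra rather than merely a bialgebra as you wrote; since the paper establishes $S^{\alpha}$ just before the lemma, this is immediate.
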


We next show that the invertibility of $\alpha$ is equivalent to the invertibility of a certain element in $H^*$.
Let $\gamma\in H^*$ be defined by \begin{equation}\gamma(x) = \alpha(x_1,S(x_2)).\end{equation} Notice that it holds that $x_1\cdota S(x_2) = \alpha(x_1,S(x_4))x_2S(x_3) = \gamma(x)$. 
We claim the following:
\begin{lemma} The invertibility of $\alpha\in (H\ot H)^*$ is equivalent to the invertibility of $\gamma\in H^*$.
\end{lemma}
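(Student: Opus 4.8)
The plan is to work in the two convolution algebras involved. On $(H\otimes H)^*$ the convolution product is $(\alpha\ast\beta)(x\otimes y)=\alpha(x_1\otimes y_1)\beta(x_2\otimes y_2)$ with unit $\epsilon\otimes\epsilon$, while on $H^*$ it is $(\gamma\ast\delta)(x)=\gamma(x_1)\delta(x_2)$ with unit $\epsilon$. Both are finite dimensional algebras since $\dim H=n<\infty$, so in each of them a one-sided inverse is automatically two-sided; this reduces every invertibility claim below to the production of a single one-sided convolution inverse. I will also use freely the normalization $\alpha(1,-)=\epsilon=\alpha(-,1)$ and the identity $x_1\cdot_{\alpha}S(x_2)=\gamma(x)1$ already recorded above.

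For the implication ``$\alpha$ invertible $\Rightarrow\gamma$ invertible'' I would exhibit the inverse of $\gamma$ directly from $\alpha^{-1}$. The natural candidate is the antipode-twist $\bar\gamma\in H^*$, $\bar\gamma(x)=\alpha^{-1}(S(x_2),x_1)$ (one checks that the variants differing by where $S$ is placed lead to the same conclusion). One then computes $(\gamma\ast\bar\gamma)(x)=\alpha(x_1,S(x_2))\,\alpha^{-1}(S(x_4),x_3)$ and shows it collapses to $\epsilon(x)$. The computation uses three ingredients: the associativity condition for $\alpha$, the ``opposite'' associativity identity satisfied by the convolution inverse $\alpha^{-1}$ (obtained in the standard way by applying $\alpha^{-1}$ to both sides of the cocycle equation), and the antipode axioms $x_1S(x_2)=\epsilon(x)1=S(x_1)x_2$, which are what let adjacent Sweedler factors telescope. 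By the finite dimensional remark this one-sided identity already gives $\gamma^{-1}=\bar\gamma$.

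For the converse ``$\gamma$ invertible $\Rightarrow\alpha$ invertible'' the idea is to recover a convolution inverse of $\alpha$ from $\gamma^{-1}$. Here I would specialize the associativity condition along the substitution $(x,y,z)\mapsto (x,\,y_1,\,S(y_2))$, realized through coproduct copies of $y$; feeding an antipode-copy of the second argument into the third makes one of the two factors of $\alpha$ on the right-hand side collapse through $y_2S(y_3)=\epsilon(y)1$, while the surviving factor $\alpha(y_1,S(y_4))$ degenerates into $\gamma$. The resulting relation reads, after cleanup, as an equation expressing the convolution product of $\alpha$ with an explicit element built from $\gamma$ (and hence, once $\gamma^{-1}$ is available, from $\gamma^{-1}$ and $\alpha$ itself) as $\epsilon\otimes\epsilon$. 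Writing down that element and checking it is a one-sided convolution inverse of $\alpha$ completes the direction.

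The main obstacle is exactly this reverse direction: guessing the correctly antipode-decorated formula for $\alpha^{-1}$ and verifying it, since unlike the forward direction there is no inverse handed to us to twist, and the cocycle identity mixes the two tensor slots so one must arrange the substitutions so that precisely one $\alpha$-factor degenerates into $\gamma$ on each side. As an alternative that sidesteps some of this bookkeeping, one can invoke the general fact that a functional on a coalgebra $C$ is convolution invertible iff its restriction to the coradical $C_0$ is (the annihilator $C_0^{\perp}$ being the locally nilpotent Jacobson radical of $C^*$), applied to $C=H\otimes H$ with $(H\otimes H)_0=H_0\otimes H_0$ and to $C=H$ with coradical $H_0$; this reduces the equivalence to the cosemisimple coalgebra $H_0$, where the relation between $\alpha|_{H_0\otimes H_0}$ and $\gamma|_{H_0}$ is more transparent.
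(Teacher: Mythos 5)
Your opening reduction (convolution algebras, finite dimensionality making one-sided inverses two-sided) is sound, and one of your two directions is correct and genuinely different from the paper's; but the other direction has a real gap as written. The problem is in ``$\alpha$ invertible $\Rightarrow$ $\gamma$ invertible''. Your candidate $\bar\gamma(x)=\alpha^{-1}(S(x_2),x_1)$ is, in general, not the convolution inverse of $\gamma$, and the parenthetical claim that ``variants differing by where $S$ is placed lead to the same conclusion'' cannot be waved through: inverses in the finite-dimensional algebra $H^*$ are unique, so at most one of the variants can equal $\gamma^{-1}$, and for non-cocommutative $H$ the functionals $\alpha^{-1}(S(x_1),x_2)$ and $\alpha^{-1}(S(x_2),x_1)$ need not coincide. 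The variant that provably works is $\gamma^{-1}(x)=\alpha^{-1}(S(x_1),x_2)$; this is exactly what the paper extracts, in the form $\gamma\, S(\beta^{(1)})\beta^{(2)}=1$ with $\beta=\alpha^{-1}$, by rewriting the cocycle identity in $(H^{\ot 3})^*$, cancelling against inverses, applying $S$ to the middle leg and multiplying the legs together; the same ordering is what the paper later relies on in the proof of Lemma \ref{lemma:TwistedAntipode}. With your ordering of the Sweedler legs, $\alpha(x_1,S(x_2))\,\alpha^{-1}(S(x_4),x_3)$, the telescoping you describe does not close, so this direction is unproved as written (though it is repaired simply by switching to the other variant). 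Separately, the coradical shortcut in your last paragraph is not a substitute proof: the general fact ``invertible iff invertible on the coradical'' is correct, but $H_0$ is not a subalgebra of $H$ in general, so neither the cocycle identity nor the passage from $\alpha$ to $\gamma$ restricts to $H_0$, and the allegedly ``more transparent'' relation between $\alpha|_{H_0\ot H_0}$ and $\gamma|_{H_0}$ is precisely what would still have to be established.

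Your direction ``$\gamma$ invertible $\Rightarrow$ $\alpha$ invertible'', on the other hand, is correct and takes a different route from the paper. Substituting $(x,y_1,S(y_2))$ into the cocycle identity and using $y_2S(y_3)=\epsilon(y)1$ together with the normalization $\alpha(x,1)=\epsilon(x)$ gives $\alpha(x_1,y_1)\,\alpha(x_2y_2,S(y_3))=\epsilon(x)\gamma(y)$, that is $\alpha\ast\theta=\epsilon\ot\gamma$ with $\theta(x\ot y)=\alpha(xy_1,S(y_2))$; hence $\theta\ast(\epsilon\ot\gamma^{-1})$ is a right convolution inverse of $\alpha$, which your finite-dimensionality remark upgrades to a two-sided one. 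The paper instead computes the fourfold product $x_1\cdot_{\alpha}y_1\cdot_{\alpha}S(y_2)\cdot_{\alpha}S(x_2)$ in two ways, obtaining $\alpha\cdot\Delta(\gamma)\cdot((S\ot S)(\alpha^{op}))=\gamma\ot\gamma$ and the explicit inverse $\Delta(\gamma)((S\ot S)(\alpha^{op}))(\gamma^{-1}\ot\gamma^{-1})$. Your version needs only one application of the cocycle identity and one copy of $\gamma^{-1}$, so it is leaner; the paper's longer computation has the advantage that its byproduct, Equation \ref{eq:Inverse2}, is reused later in proving the properties of the twisted antipode $\tiS$.
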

\begin{proof}
Let $x,y\in H$.
consider the product $x_1\cdota y_1\cdota S(y_2)\cdota S(x_2)$.
Using the associativity of $\alpha$, we will write this element in two different forms. On the one hand, starting with the multiplication of $y_1$ and $S(y_2)$ we see that this element is equal to $\gamma(x)\gamma(y)$.
On the other hand, we also have 
$$x_1\cdota y_1\cdota S(y_2)\cdota S(x_2) = \alpha(x_1,y_1)\alpha(S(y_4),S(x_4))\cdot$$ \begin{equation}x_2y_2\cdota S(x_3y_3) =  \alpha(x_1,y_1)\gamma(x_2y_2)\alpha(S(y_3),S(x_3)).\end{equation}
In other words, we get the following equation in $H^*\ot H^*$:
\begin{equation}\alpha\cdot \Delta(\gamma)\cdot ((S\ot S)(\alpha^{op})) = \gamma\ot \gamma.\end{equation}
This means that if $\gamma$ is invertible then $\alpha$ is invertible, and its explicit inverse is given by 
\begin{equation}\label{eq:Inverse}\alpha^{-1}=\Delta(\gamma)((S\ot S)(\alpha^{op}))(\gamma^{-1}\ot\gamma^{-1}).
\end{equation}
 Moving $\Delta(\gamma)$ to the other side of the equation and writing everything as functionals on $H$ gives us
	\begin{equation}\label{eq:Inverse2}
		\gamma^{-1}(x_1y_1)\alpha^{-1}(x_2,y_2) = \alpha(S(y_1),S(x_1))\gamma^{-1}(x_2)\gamma^{-1}(y_2)
	\end{equation}
for every $x,y\in H$.

On the other hand, assume that $\alpha$ is invertible. We write $\beta=\alpha^{-1}$, $\alpha = \alpha^{(1)}\ot \alpha^{(2)}$ and similarly for $\beta$.
The 2-cocycle condition for $\alpha$ reads $(\alpha\ot 1)(\Delta(\alpha^{(1)})\ot \alpha^{(2)}) = (1\ot \alpha)(\alpha^{(1)}\ot\Delta(\alpha^{(2)}))$.
Multiplying by the relevant inverses from both sides, we get the equation
\begin{equation}(1\ot \beta)(\alpha\ot 1) = (\alpha^{(1)}\ot\Delta(\alpha^{(2)}))(\Delta(\beta^{(1)})\ot \beta^{(2)}).\end{equation}
By applying $S$ to the middle factor and multiplying all the three tensors together we get the equation
\begin{equation}\alpha^{(1)}S(\alpha^{(2)})S(\beta^{(1)})\beta^{(2)} = 1.\end{equation}
But this equation translates to $\gamma S(\beta^{(1)})\beta^{(2)} = 1$, so $\gamma$ is invertible as desired. 
\end{proof}
Next, we define a twisted antipode $\tiS:\allH\to\allH$ by \begin{equation}\tiS(x) = S(x_1)\gamma^{-1}(x_2).\end{equation}
We claim the following:
\begin{lemma}\label{lemma:TwistedAntipode}
In $\alH$ it holds that $x_1\cdota \tiS(x_2) = \tiS(x_1)\cdota x_2 = \epsilon(x)$ and $\tiS(x)\cdota \tiS(y) = \alpha^{-1}(y_2,x_2)\ti{S}(y_1x_1)$
\end{lemma}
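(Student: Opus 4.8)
The plan is to work inside the convolution algebras $\Hom(H,W)$ and $\Hom(H\ot H,W)$, where $W=\alH$ carries the deformed product $\cdota$ and $H$ (resp. $H\ot H$) carries its coalgebra structure. Throughout I write $\iota:H\ra W$ for the identity on the underlying space (an $H$-comodule map) and regard $\ga,\ga^{-1}\in H^*$ as the scalar-valued maps $x\mapsto\ga(x)1$. Two facts are then at hand: the identity $x_1\cdota S(x_2)=\ga(x)$ recorded before the lemma says precisely $\iota*S=\ga$, and by definition $\tiS=S*\ga^{-1}$.

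First I would prove the right relation $x_1\cdota\tiS(x_2)=\ep(x)$. Using associativity of convolution and the two facts above,
\begin{equation*}\iota*\tiS=\iota*(S*\ga^{-1})=(\iota*S)*\ga^{-1}=\ga*\ga^{-1},\end{equation*}
and since both factors are scalar-valued this product is just the map $x\mapsto\ep(x)1$, because $\ga*\ga^{-1}=\ep$ in $H^*$. This settles the right-hand identity cleanly.

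The left relation $\tiS(x_1)\cdota x_2=\ep(x)$ is the main obstacle: scalar-valued maps are not convolution-central, so the cancellation above has no left analogue, and one checks that the purely formal convolution identities (e.g. $(\tiS*\iota)*\tiS=\tiS$) give no new information. I would prove it by the honest computation. Expanding the definitions yields
\begin{equation*}\tiS(x_1)\cdota x_2=\ga^{-1}(x_3)\,\alpha(S(x_2),x_4)\,S(x_1)x_5,\end{equation*}
and the task is to collapse the right side to $\ep(x)1$ using the $2$-cocycle associativity together with the inverse-cocycle identity \eqref{eq:Inverse2} and the antipode axioms. Equivalently, and more transparently, I would first establish the left-handed gamma identity $S(x_1)\cdota x_2=\bar\ga(x)1$ with $\bar\ga(x):=\alpha(S(x_1),x_2)$, verify that $\bar\ga\in H^*$ is invertible, and conclude that $\iota$ has the left convolution inverse $\bar\ga^{-1}*S$; as $\iota$ already has the right inverse $\tiS$, the two coincide and $\tiS*\iota$ is the map $x\mapsto\ep(x)1$. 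I expect showing that $S*\iota$ is scalar-valued and invertible to be the genuinely technical point.

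Finally, for the anti-multiplicativity I would pass to $\Hom(H\ot H,W)$ and use the reversed multiplication $\mu(x\ot y)=y\cdota x$. With $P(x\ot y)=\tiS(x)\cdota\tiS(y)$ and $Q(x\ot y)=\alpha^{-1}(y_2,x_2)\tiS(y_1x_1)$, associativity of $\cdota$ gives a direct collapse showing both $\mu*P$ and $P*\mu$ equal the map $x\ot y\mapsto\ep(x)\ep(y)1$, the first from the right relation applied twice and the second from the left relation applied twice; hence $P$ is a two-sided convolution inverse of $\mu$. A second, slightly longer collapse—using the right relation once and the identity $\alpha*\alpha^{-1}=\ep$ in $(H\ot H)^*$ to absorb the factor $\alpha^{-1}(y_2,x_2)$—shows $\mu*Q$ is the same unit map, so $Q$ is a right inverse of $\mu$. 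Since a two-sided inverse is unique and absorbs any one-sided inverse, $P=Q$, which is exactly $\tiS(x)\cdota\tiS(y)=\alpha^{-1}(y_2,x_2)\tiS(y_1x_1)$. Note that this last part is logically downstream of both antipode relations, so the real work indeed sits in the left relation of the previous step.
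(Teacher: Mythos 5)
Your steps 1 and 3 are correct. Step 1 is the paper's own computation of the right relation, recast in the convolution algebra $\Hom(H,W)$; step 3 is genuinely different from the paper, which proves the anti-multiplicativity by a direct calculation using Equation \ref{eq:Inverse2}, whereas your uniqueness-of-two-sided-inverses argument for $\mu$ in $\Hom(H\ot H,W)$ avoids that identity entirely and, granting both antipode relations, is cleaner. The gap sits exactly where you place the real work: step 2.

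The left-handed gamma identity $S(x_1)\cdota x_2=\bar\ga(x)1$ with $\bar\ga(x)=\alpha(S(x_1),x_2)$, on which your preferred route (ii) rests, is false in general: $S*\iota$ need not be scalar-valued at all, so this is not a technical point to be checked but a wrong claim. Take $H$ to be Sweedler's four-dimensional Hopf algebra, generated by $g,x$ with $g^2=1$, $x^2=0$, $gx=-xg$, $\Delta(g)=g\ot g$, $\Delta(x)=x\ot 1+g\ot x$, and let $\alpha$ be the pullback along the Hopf projection $H\to K\Z/2$ (killing $x$) of the group $2$-cocycle $\beta(g^i,g^j)=c^{ij}$ with $c\neq 1$; this $\alpha$ is unital and convolution invertible. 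Then $g\cdota x=\alpha(g,g)\,gx=c\,gx$, so
\begin{equation*}
S(x_{(1)})\cdota x_{(2)}=S(x)\cdota 1+S(g)\cdota x=-gx+c\,gx=(c-1)\,gx,
\end{equation*}
which is not a scalar multiple of $1$. The obstruction is structural: if $S*\iota$ were scalar-valued, applying $\ep$ to the associativity identity $\iota*(S*\iota)=(\iota*S)*\iota=\ga*\iota$ would force the scalar to be $\ga$, and then $x_1\ga(x_2)=\ga(x_1)x_2$ for all $x$, i.e.\ $\ga\in Z(H^*)$ --- a condition that fails for general cocycles, and fails in this example. The correction $\ga^{-1}$ inside $\tiS=S*\ga^{-1}$ exists precisely to repair this failure; indeed here $\tiS(x_{(1)})\cdota x_{(2)}=-gx+\tfrac1c\,(c\,gx)=0$, as the lemma asserts.

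So the left relation must be proved by your route (i), whose one nontrivial step you deferred; moreover the tool you cite there, Equation \ref{eq:Inverse2}, is not the convenient one. The paper's input is the formula $\ga^{-1}(x)=\alpha^{-1}(S(x_1),x_2)$ from the preceding lemma: substituting it into your expansion gives
\begin{equation*}
\tiS(x_1)\cdota x_2=\alpha^{-1}(S(x_3),x_4)\,\alpha(S(x_2),x_5)\,S(x_1)x_6,
\end{equation*}
and the two cocycle factors are precisely $(\alpha^{-1}*\alpha)$ evaluated at the nested legs $S(x_{(2)})\ot x_{(3)}$, hence collapse to counits, leaving $S(x_1)x_2=\ep(x)1$. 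With that step supplied, your steps 1 and 3 complete the proof of the lemma.
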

\begin{proof}
By the last lemma we know that $\gamma^{-1}(x) = \alpha^{-1}(S(x_1),x_2)$.
We calculate: $$\tiS(x_1)\cdota x_2 = \gamma^{-1}(x_3)\alpha(S(x_2),x_4)S(x_1)x_5 = $$ \begin{equation}
\alpha^{-1}(S(x_3),x_4)\alpha(S(x_2),x_5)S(x_1)x_6 = S(x_1)x_2 = \epsilon(x)\end{equation}
where for the last equation we have used the fact that $\alpha^{-1}$ and $\alpha$ multiply to $\epsilon\ot\epsilon$.
For the second equation we get
$$x_1\cdota \tiS(x_2) = \alpha(x_1,S(x_4))x_2S(x_3)\gamma^{-1}(x_5) = $$ \begin{equation}\alpha(x_1,S(x_2))\gamma^{-1}(x_3) = \gamma(x_1)\gamma^{-1}(x_2) = \epsilon(x)\end{equation} as desired.
For the last equality, we calculate, using Equation \ref{eq:Inverse2}
$$\tiS(x)\cdota \tiS(y) = \alpha(S(x_2),S(y_2))S(x_1)S(y_1)\gamma^{-1}(x_3)\gamma^{-1}(y_3)= $$
\begin{equation} S(y_1x_1)\gamma^{-1}(y_2x_2)\alpha^{-1}(y_3,x_3) = \tiS(y_1x_1)\alpha^{-1}(y_2,x_2)\end{equation} as desired.
\end{proof}

If $W$ is a comodule algebra which is of the form $\alH$, we can think of the choice of the 2-cocycle $\alpha$ as a choice of coordinates for $W$.
Indeed, the choice of the 2-cocycle is equivalent to the choice of an isomorphism $W\cong H$ as $H$-comodules.
We would like to give here also a ``coordinate free'' version of this structure, which will help us later on in the application of geometric invariant theory.

Assume now that $W$ is an $H$-comodule algebra. We denote the multiplication in $W$ by concatenation or by $\cdot$ and the coaction of $H$ by $\rho:W\to W\ot H$, $w\mapsto w_1\ot w_2$.
The map \newpage $$M:W\ot W\to W\ot H$$ \begin{equation}x\ot y\mapsto xy_1\ot y_2\end{equation} will play a prominent role in what follows. We have the following proposition (see Theorem 3.8 in \cite{Montgomery2} and references therein. See also \cite{Schauenburg1});
\begin{proposition}\label{prop:FormofW}
A finite dimensional $H$-comodule algebra $W$ is of the form $\alH$ if and only if $W\neq 0$ and the map $M$ defined above is invertible.
\end{proposition}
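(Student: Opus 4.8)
I would prove the two implications separately; the forward one is a direct computation, while the converse reduces to the structure theory of Hopf--Galois extensions over a field. For the forward implication, assume $W=\alH$. Since the coaction is $\Delta$ and $M(x\ot y)=x\cdota y_1\ot y_2$, the natural candidate for the inverse is
\[
T\colon \allH\ot H\ra \allH\ot\allH,\qquad T(x\ot h)=x\cdota\tiS(h_1)\ot h_2,
\]
built from the twisted antipode $\tiS$ of Lemma~\ref{lemma:TwistedAntipode}. Expanding $M\circ T$ and $T\circ M$ and using coassociativity, each composite reduces to an expression whose inner factor is $\tiS(h_1)\cdota h_2$, respectively $y_1\cdota\tiS(y_2)$; the identities $\tiS(x_1)\cdota x_2=x_1\cdota\tiS(x_2)=\epsilon(x)1$ of Lemma~\ref{lemma:TwistedAntipode} collapse this inner factor and leave the identity map. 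Hence $T=M^{-1}$ and $M$ is invertible.

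For the converse, assume $W\neq 0$ and that $M$ is invertible, and first read off two consequences. Injectivity of $M$ shows that the coinvariants are trivial: if $\rho(b)=b\ot 1$ then $M(b\ot 1)=b\ot 1=M(1\ot b)$, so $b\ot 1-1\ot b\in\ker M=0$, which forces $b\in K1$. Bijectivity of $M$ gives $(\dim_K W)^2=\dim_K W\cdot n$, so that $\dim_K W=n$ because $W\neq 0$. Thus $K\subseteq W$ is an $H$-Galois extension with trivial coinvariants and of the expected dimension.

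The main step is then to produce the $2$-cocycle. Here I would invoke the structure theory of $H$-Galois extensions: since $H$ is finite dimensional and the base is a field, such an extension has the normal basis property and is cleft, i.e.\ there is a convolution-invertible right $H$-comodule map $j\colon H\ra W$ (Theorem~3.8 in \cite{Montgomery2} and \cite{Schauenburg1}; the underlying input is the structure theorem for relative Hopf modules, see \cite{Montgomery}). Given such a $j$ with convolution inverse $j^{-1}$, I would set
\[
\alpha(x,y)\,1_W=j(x_1)\,j(y_1)\,j^{-1}(x_2y_2),
\]
verify that the right-hand side is $H$-coinvariant and hence, by the previous step, a scalar multiple of $1_W$, so that $\alpha\colon H\ot H\ra K$ is well defined. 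The classical computation of cleft products then shows that $\alpha$ is a normalized $2$-cocycle and that $j\colon \alH\ra W$ is an isomorphism of $H$-comodule algebras.

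I expect the passage from the Galois condition to the cleft/normal-basis structure to be the only nonformal point: invertibility of $M$ is an abstract bijectivity statement, whereas extracting the comodule isomorphism $W\cong H$ (equivalently, that $W$ is free of rank one over $H^*$) genuinely uses finite dimensionality and the Hopf-module structure theorem rather than a formal manipulation. The explicit inverse $T$, the computation of the coinvariants, the dimension count, and the cocycle verification are all routine by comparison.
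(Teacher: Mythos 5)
Your proposal is correct and is essentially the paper's own route: the paper gives no proof of this proposition, citing instead Theorem 3.8 of \cite{Montgomery2} (and \cite{Schauenburg1}), and the single non-formal step in your argument --- that an $H$-Galois extension of the base field with $H$ finite dimensional is cleft, hence of the form $^{\alpha}H$ --- is exactly that cited result. The parts you fill in are sound and consistent with the paper: your explicit inverse $T(x\ot h)=x\cdota\tiS(h_1)\ot h_2$ is the same formula the paper records in the proof of Lemma \ref{lemma:Minv} and in the remark following Lemma \ref{lemma:Schauenburg}, and your preliminary reductions (trivial coinvariants, so that $K\subseteq W$ is genuinely Galois over its coinvariants, and $\dim_K W=\dim_K H$) are correct.
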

The following lemma gives a convenient criterion to the invertibility of $M$. To state it, notice that if $M$ is invertible with an inverse $T$, then the map $\ti{T}:H\to W\ot W$ given by $\ti{T} = T(1\ot h)$ considered as a map from $H$ to $W^{op}\ot W$ is an algebra homomorphism. It turns out that the invertibility of $M$ can be detected by considering this map. 
\begin{lemma}\label{lemma:Minv}	
The map $M$ is invertible if and only if there exists a homomorphism of algebras $$\ti{T}:H\to W^{op}\ot W$$
for which the composition $$H\stackrel{\ti{T}}{\to}W\ot W\stackrel{M}{\to}W\ot H$$ is equal to $h\mapsto 1\ot h$,
and the composition $$W\stackrel{\rho}{\to}W\ot H\stackrel{1\ot\ti{T}}{\to}W\ot W\ot W\stackrel{m_W\ot 1}{\to}W\ot W$$ is equal to $w\mapsto 1\ot w$.
Moreover, it is enough to check the equality of these compositions on some multiplicative-generating sets for $H$ and for $W$.
\end{lemma}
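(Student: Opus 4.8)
The plan is to prove the two implications by exploiting the left $W$-module structures on the source and target of $M$, and then to deduce the \emph{moreover} clause by showing that each of the two compatibility conditions cuts out a subalgebra. The basic observation to record first is that $M$ is a morphism of left $W$-modules, where $W$ acts on $W\ot W$ and on $W\ot H$ through the left tensor factor: indeed $M(wx\ot y)=wxy_1\ot y_2=w\cdot M(x\ot y)$. Consequently, whenever $M$ is invertible its inverse $T$ is automatically left $W$-linear as well, and this is the mechanism that ties together all the computations below.

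For the forward direction, assume $M$ is invertible and set $\ti{T}(h)=T(1\ot h)$, writing $\ti{T}(h)=h^{[1]}\ot h^{[2]}$. The identity $MT=Id$ gives $M\ti{T}(h)=1\ot h$ at once, which is the first required composition. For the second, left $W$-linearity of $T$ together with $\rho(w)=M(1\ot w)$ yields $w_1\cdot\ti{T}(w_2)=T(w_1\ot w_2)=T(\rho(w))=TM(1\ot w)=1\ot w$, which is exactly $(m_W\ot 1)(1\ot\ti{T})\rho(w)=1\ot w$. It then remains to check that $\ti{T}\colon H\to W^{op}\ot W$ is an algebra map. Here I would use that $M$ is injective, reducing the claim $\ti{T}(hg)=g^{[1]}h^{[1]}\ot h^{[2]}g^{[2]}$ to equality of images under $M$: the left image is $1\ot hg$ by the first composition, while the right image, expanded using that $\rho$ is an algebra map, can be collapsed to $1\ot hg$ by applying the already-established first composition once for $h$ and once for $g$. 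Unitality $\ti{T}(1)=1\ot1$ follows from $M(1\ot1)=1\ot1$.

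For the converse, given an algebra map $\ti{T}$ satisfying the two conditions, I would define $T\colon W\ot H\to W\ot W$ as the left $W$-linear map $T(w\ot h)=w\cdot\ti{T}(h)$. Then $MT(w\ot h)=w\cdot M\ti{T}(h)=w\cdot(1\ot h)=w\ot h$ by the first condition and left $W$-linearity of $M$, so $MT=Id$; and $TM(x\ot y)=xy_1\cdot\ti{T}(y_2)=x\cdot\big(y_1(y_2)^{[1]}\ot(y_2)^{[2]}\big)=x\cdot(1\ot y)=x\ot y$ by the second condition, so $TM=Id$. Hence $M$ is invertible. It is worth noting that this half does not actually use multiplicativity of $\ti{T}$; the algebra-map hypothesis is what powers the \emph{moreover} statement.

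For that final clause I would show that $U=\{h\in H:M\ti{T}(h)=1\ot h\}$ is a unital subalgebra of $H$ and, in parallel, that $U'=\{w\in W:(m_W\ot 1)(1\ot\ti{T})\rho(w)=1\ot w\}$ is a unital subalgebra of $W$; since each contains the respective generating set, each must be the whole object, so checking the conditions on generators suffices. The crux is closure of $U$ under products: using that both $\ti{T}$ and $\rho$ are algebra maps, one expands $M\ti{T}(hg)$ into an expression such as $g^{[1]}h^{[1]}h^{[2]}_1 g^{[2]}_1\ot h^{[2]}_2 g^{[2]}_2$, substitutes the relation $h^{[1]}h^{[2]}_1\ot h^{[2]}_2=1\ot h$ for $h\in U$, and then the relation for $g\in U$. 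The argument for $U'$ is entirely parallel. The only genuinely delicate point, and thus the main obstacle, is the bookkeeping in these Sweedler manipulations: because $W$ is noncommutative, the $h$-dependent portion to be replaced sits sandwiched between factors coming from $g$, and one must recognize each substitution as the application of a \emph{fixed} linear operator to the replaceable sub-tensor (with the $g$-data entering only through that operator). Once this is seen, both the multiplicativity of $\ti{T}$ in the forward direction and the subalgebra arguments in the \emph{moreover} clause go through.
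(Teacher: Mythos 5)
Your proof is correct, and for the converse implication and the \emph{moreover} clause it is essentially the paper's own argument: the extension $T(w\ot h)=(w\ot 1)\cdot\ti{T}(h)$, the verification that $MT$ and $TM$ are the identity via left $W$-linearity, and the closure-under-products computations for the two sets (with exactly the caveat you flag, that each Sweedler substitution must be read as a fixed linear operator applied to the replaceable sub-tensor) all appear in the paper's proof. Where you genuinely diverge is the forward direction. The paper does not argue abstractly there: it invokes Proposition \ref{prop:FormofW} to identify $W$ with $\allH$, writes the explicit formula $\ti{T}(h)=\tiS(h_1)\ot h_2$ in terms of the twisted antipode, and verifies multiplicativity of $\ti{T}$ using the identity $\tiS(x)\cdota\tiS(y)=\alpha^{-1}(y_2,x_2)\tiS(y_1x_1)$ from Lemma \ref{lemma:TwistedAntipode}. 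You instead stay coordinate-free: multiplicativity of $\ti{T}$ follows from injectivity of $M$, since both $M\ti{T}(hg)$ and $M(\ti{T}(h)\ti{T}(g))$ equal $1\ot hg$ --- the latter by the same substitution bookkeeping that powers the \emph{moreover} clause --- and the second composition follows from $W$-linearity of $T$ together with $\rho(w)=M(1\ot w)$, rather than from the antipode identity $x_1\cdota\tiS(x_2)=\epsilon(x)$. Your route buys self-containedness: it needs neither the classification of such $W$ as cocycle deformations (Proposition \ref{prop:FormofW}) nor Lemma \ref{lemma:TwistedAntipode}, and it recycles one computation for two purposes. The paper's route buys the explicit formula $T(x\ot y)=x\cdota\tiS(y_1)\ot y_2$, which is not a throwaway: it is quoted and reused later, in the remark following Lemma \ref{lemma:Schauenburg}, in the Yetter--Drinfeld structure on $\alH$, and in the proof of Lemma \ref{lemma:Identities}.
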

\begin{proof}
Assume first that $W\cong \allH$. For convenience, assume further that $W=\allH$. We define $\ti{T}(h) = T(1\ot h)$ where $T$ is the linear inverse of $M$. 
In this case the map $\ti{T}$ can be calculated explicitly in terms of the algebra $\alH$. Indeed, we get $\ti{T}(h) = \tiS(h_1)\ot h_2\in W\ot W$. 
This map is multiplicative when considered as a map $H\to W^{op}\ot W$ since
$$\ti{T}(x)\ti{T}(y) = \tiS(y_1)\cdota \tiS(x_1)\ot x_2\cdota y_2 =
$$
\begin{equation}\tiS(x_1y_1)\alpha^{-1}(x_2,y_2)\alpha(x_3,y_3)\ot x_4y_4 =  \tiS(x_1y_1)\ot x_2y_2 = \ti{T}(xy).\end{equation}

In the other direction, assume that such a map $\ti{T}$ exists, and extend it to a map $T:W\ot H\to W\ot W$ by $w\ot h\mapsto (w\ot 1)\cdot \ti{T}(h)$.
Then the conditions of the lemma imply that $T$ is the inverse of $M$. This follows from the fact that both maps are $W$-linear where $W$ acts on the left tensor factor,
and the compositions of $M$ and $T$ in both directions are the identity when evaluated on a generating subset (as $W$-modules) of these $W$-modules. 

To see why it is enough to check the conditions of the lemma on a generating set (in the multiplicative sense), we show that the set of elements $h\in H$ for which $M\ti{T}(h)=1\ot h$ is closed under multiplication.
In a similar way, we show that the set of elements $w\in W$ for which $(m_W\ot 1)(1\ot\ti{T})\rho(w) = 1\ot w$ is closed under multiplication.
Assume then that $M\ti{T}(x) = 1\ot x$ and that $M\ti{T}(y) = 1\ot y$.
Write $\ti{T}(x) = \sum_i a_i\ot b_i$ and $\ti{T}(y) = \sum_j c_j\ot d_j$.
This means that $$\sum_i a_i(b_i)_1\ot (b_i)_2= 1\ot x\text{ and } $$ \begin{equation}\sum_j c_j(d_j)_1\ot (d_j)_2 = 1\ot y.\end{equation}
We then have $$M\ti{T}(xy) = M(\ti{T}(x)\ti{T}(y)) = M(\sum_{i,j}c_ja_i\ot b_id_j) = $$ \begin{equation}\sum_{i,j}c_ja_i(b_i)_1(d_j)_1\ot (b_i)_2(d_j)_2 = 
\sum_j c_j(d_j)_1\ot x(d_j)_2 = 1\ot xy.\end{equation}
In the other direction, we use the fact that $(1\ot \ti{T})\rho:W\to W\ot W^{op}\ot W$ is an algebra map. 
Then if $w,w'\in W$ satisfy the condition of the lemma and we write $(1\ot \ti{T})\rho(w)= \sum_i a_i\ot b_i\ot c_i$ and $(1\ot \ti{T})\rho(w') = \sum_j r_j\ot s_j\ot t_j$, then it holds that 
$\sum_i a_ib_i\ot c_i = 1\ot w$ and $\sum_j r_js_j\ot t_j = 1\ot w'$.
We then have that $$(m_W\ot 1)(1\ot \ti{T})\rho(w\cdot w') = \sum_{i,j}a_ir_js_jb_i\ot c_it_j = $$ \begin{equation}\sum_i a_ib_i\ot c_iw' = 1\ot ww'\end{equation}
and we are done.
\end{proof}

The coordinate free perspective also enables us to construct the algebra $\alHal$ categorically in terms of the algebra $\alH$: 
\begin{lemma}[see \cite{Schauenburg1}]\label{lemma:Schauenburg} The subspace of $H$-coinvariants in $\alH\ot\allH$ is a subalgebra under the multiplication in $\alH\ot(\allH)^{op}$. 
It is isomorphic to the algebra $\alHal$, and the map $H\to \allH\ot\allH$ which sends $x$ to $x_1\ot \tiS(x_2)$ is an isomorphism of coalgebras between $H$ and the algebra of coinvariants.
\end{lemma}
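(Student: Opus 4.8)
The plan is to verify the three claims of Lemma~\ref{lemma:Schauenburg} directly, working inside the algebra $\alH\ot(\allH)^{op}$ and using the twisted antipode $\tiS$ and the inverse cocycle $\alpha^{-1}$ that were analyzed in Lemma~\ref{lemma:TwistedAntipode}. First I would identify the space of $H$-coinvariants of $\alH\ot\allH$ under the diagonal coaction coming from $\rho$; by a standard Galois-theoretic argument (the coaction map is bijective, cf.\ Proposition~\ref{prop:FormofW}), the coinvariants are exactly the image of the map $\phi:H\to\allH\ot\allH$, $\phi(x)=x_1\ot\tiS(x_2)$. To see that $\phi(x)$ is coinvariant, I would apply the diagonal coaction to $x_1\ot\tiS(x_2)$ and use that $\rho$ restricted to the first factor is $\Delta$ while on the second factor $\tiS(x)=S(x_1)\gamma^{-1}(x_2)$ is $H$-colinear up to the antipode; combining $\Delta$ with the antipode identity $x_1S(x_2)\ot\cdots$ collapses the $H$-component to $\ep(x)1$, exhibiting coinvariance. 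The injectivity and the dimension count (both sides have dimension $n$) then force $\phi$ to be a linear isomorphism onto the coinvariants.

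Next I would check that $\phi$ is a coalgebra map. This is the most mechanical step: the comultiplication on the coinvariant subalgebra is inherited from that of $\alH\ot\allH$ (which is just $\Delta\ot\Delta$, since the coalgebra structure is undeformed), and one computes $(\phi\ot\phi)\Delta(x)$ against $\Delta_{W\ot W}\phi(x)$, using coassociativity and the fact that $\tiS$ is an anti-coalgebra map (so $\Delta(\tiS(x))=\tiS(x_2)\ot\tiS(x_1)$). Matching Sweedler indices shows both sides equal $x_1\ot\tiS(x_4)\ot x_2\ot\tiS(x_3)$ after the flip, confirming that $\phi$ intertwines the coproducts; compatibility with counits is immediate since $\ep(\tiS(x))=\ep(x)$.

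The substantive step is to show that the coinvariants form a subalgebra under the multiplication of $\alH\ot(\allH)^{op}$ and that $\phi$ is an algebra isomorphism onto $\alHal$. Here I would compute $\phi(x)\cdot\phi(y)$, where the product in the second tensor factor is the \emph{opposite} of $\cdot_\alpha$. Using Lemma~\ref{lemma:TwistedAntipode}, the second-factor product $\tiS(y_2)\cdot_\alpha\tiS(x_2)$ (taken in $W^{op}$, hence computed as $\tiS(x_2)\cdot_\alpha\tiS(y_2)$ in $W$) rewrites as $\alpha^{-1}(y_3,x_3)\tiS(y_2x_2)$, while the first-factor product is $x_1\cdot_\alpha y_1=\alpha(x_1,y_1)x_2y_2$. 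Assembling these and collecting the cocycle factors $\alpha(x_1,y_1)\alpha^{-1}(x_3,y_3)$ around $x_2y_2\ot\tiS(\cdots)$ should reproduce exactly $\phi$ applied to the two-sided twisted product $x\cdot y=\alpha(x_1,y_1)\alpha^{-1}(x_3,y_3)x_2y_2$ of $\alHal$. The main obstacle I anticipate is purely bookkeeping: keeping the Sweedler indices aligned through the antipode-reversal and verifying that the surviving scalar is precisely the double-twist cocycle rather than some shifted variant. I would organize this by first establishing the clean identity $\tiS(x)\cdot_\alpha\tiS(y)=\alpha^{-1}(y_2,x_2)\tiS(y_1x_1)$ from Lemma~\ref{lemma:TwistedAntipode} and then feeding it into the product computation, so that the only real work is the final scalar comparison. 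Once the multiplicativity is established on $\phi$, the fact that $\phi$ is a linear isomorphism onto the coinvariants upgrades it automatically to an algebra isomorphism $\alHal\xrightarrow{\sim}(\allH\ot\allH)^{co H}$, completing the proof.
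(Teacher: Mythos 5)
Your first and third steps are essentially viable, but your second step rests on two false identities, and this is a genuine gap. Write $W=\allH$ and $\phi(x)=x_1\ot\tiS(x_2)$. The coalgebra structure on $(W\ot W)^{co H}$ for which the lemma is asserted is \emph{not} the restriction of the tensor-product comultiplication $u\ot v\mapsto (u_1\ot v_1)\ot(u_2\ot v_2)$ of $W\ot W$: that comultiplication does not even map the coinvariants into $(W\ot W)^{co H}\ot (W\ot W)^{co H}$. Moreover, since $\tiS(x)=S(x_1)\gamma^{-1}(x_2)$, one has $\Delta(\tiS(x))=S(x_2)\ot S(x_1)\gamma^{-1}(x_3)$, which is \emph{not} $\tiS(x_2)\ot\tiS(x_1)$, and $\ep(\tiS(x))=\gamma^{-1}(x)$, which is \emph{not} $\ep(x)$; so $\tiS$ is neither an anti-coalgebra map nor counit-preserving. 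Concretely, the tensor-product comultiplication applied to $\phi(x)$ gives $(x_1\ot S(x_4))\ot(x_2\ot S(x_3)\gamma^{-1}(x_5))$, whereas $(\phi\ot\phi)\Delta(x)=(x_1\ot\tiS(x_2))\ot(x_3\ot\tiS(x_4))$; already for Sweedler's four-dimensional Hopf algebra with trivial cocycle (so $\tiS=S$ and $\gamma=\ep$) and $x$ the skew-primitive generator these two expressions disagree, and the first one does not lie in $(W\ot W)^{co H}\ot(W\ot W)^{co H}$. What you must use instead is the comultiplication that Schauenburg's construction places on $(W\ot W)^{co H}$, defined through the translation map $\ti{T}(h)=\tiS(h_1)\ot h_2$: on a coinvariant $u\ot v$ it is $\Delta_L(u\ot v)=u_{(0)}\ot\tiS(u_{(1)})\ot u_{(2)}\ot v$ (iterated coaction on the first leg), and the counit is $\ep_L(u\ot v)=u\cdota v\in(\allH)^{co H}=K$. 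With this structure the verification is immediate: $\Delta_L(\phi(x))=x_1\ot\tiS(x_2)\ot x_3\ot\tiS(x_4)=\phi(x_1)\ot\phi(x_2)$, and by Lemma \ref{lemma:TwistedAntipode}, $\ep_L(\phi(x))=x_1\cdota\tiS(x_2)=\ep(x)$. Without this replacement your second paragraph proves nothing, because the map you check is not a morphism for any coalgebra structure actually carried by the coinvariants.

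In the multiplicativity step your final answer is right but your intermediate line has the factors in the wrong order, and with that order the computation would not close. In $\alH\ot(\allH)^{op}$ the second component of $\phi(x)\phi(y)$ is, as an element of $\allH$, the product $\tiS(y_2)\cdota\tiS(x_2)$ (the opposite multiplication reverses the order), and Lemma \ref{lemma:TwistedAntipode} applied in this order gives $\alpha^{-1}(x_3,y_3)\tiS(x_2y_2)$; assembling then yields $\phi(x)\phi(y)=\alpha(x_1,y_1)\alpha^{-1}(x_4,y_4)\,x_2y_2\ot\tiS(x_3y_3)=\phi(x\cdot y)$, as desired. Your version, $\tiS(x_2)\cdota\tiS(y_2)=\alpha^{-1}(y_3,x_3)\tiS(y_2x_2)$, would leave you with $\tiS(y_3x_3)$ and $\alpha^{-1}(y_4,x_4)$, which do not match $\phi$ of the double-twisted product; this slip is easily repaired, unlike the coalgebra issue. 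For comparison, the paper handles the algebra statement by citing \cite{Schauenburg1} and treats the coalgebra statement as a direct calculation — a calculation that tacitly uses the translation-map comultiplication described above, not the tensor-product one.
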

\begin{proof} For a proof of the first statement see \cite{Schauenburg1}. The second statement is a direct calculation. 
\end{proof}

\begin{remark} In the proof of Lemma \ref{lemma:Minv} we have seen that an explicit formula for the inverse of $M$ is given by $T(x\ot y) = x\cdota \tiS(y_1)\ot y_2$.
Notice in particular that the map $M$ is both a $W$-module map and an $H$-comodule map, where $W$ acts from the left on the left tensor factor on both sides, and $H$ coacts from the right on the right tensor factor of $W$ in $W\ot W$ and on
the tensor factor $H$ in $W\ot H$. 
\end{remark}

The map $T$ can also be used to define on $\alH$ a canonical structure of a Yetter-Drinfeld module. We define a right action of $H$ on $\alH$ in the following way:
\begin{equation}\forall x\in \allH,h\in H\quad x\cdot h = mL_{(23)}T(1\ot h)\ot x.\end{equation}
The map $m$ is the multiplication on $\alH$ and $L_{(23)}:(\allH)^{\ot 3}\to (\allH)^{\ot 3}$ is given explicitly by $a\ot b\ot c\to a\ot c\ot b$.
Using the explicit formula for $T$ we obtained we see that $T(1\ot h) = \tiS(h_1)\ot h_2$. This implies that $x\cdot h = \tiS(h_1)\cdota x\cdota h_2$. 
Explicit calculation now shows us that $\rho(x\cdot h) = x_1\cdot h_2\ot S(h_1)x_2h_3$, which means that $\alH$ has a structure of a Yetter-Drinfeld module over $H$ indeed.

\subsection{Geometric invariant theory}
Let $K$ be an algebraically closed field of characteristic zero as before, and let $Y$ be an affine variety defined over $K$. 
This means that $Y$ can be thought of as the set of zeros of a collection of polynomials $\{f_1,\ldots f_m\}\subseteq K[y_1,\ldots y_n]$. 
We write $K[Y]:=K[y_1,\ldots y_n]/(f_1,\ldots,f_m)$ and think of this ring as the ring of polynomial functions on $Y$. We do assume here that $(f_1,\ldots f_m)$ is a radical ideal. 
Even if it is not the case, we can still define $Y$ as the set of zeros of $f_1,\ldots, f_m$, but in the definition of $K[Y]$ we need to take the radical of the ideal $(f_1,\ldots, f_m)$.

Let $\Ga$ be a reductive algebraic group which acts on $Y$ algebraically. In this paper the group $\Ga$ will be a reductive subgroup of $GL_n$, 
the affine variety $Y$ will be the variety of all possible cocycle deformations of a given finite dimensional Hopf algebra $H$, and two points in $Y$ will define isomorphic cocycle deformations if and only if they lie in the same orbit of $\Ga$.
For this reason we would like to form the quotient space $Y/\Ga$. Every polynomial function $f\in K[Y]$ which is invariant under the induced action of $\Ga$, $g\cdot f (y) = f(g^{-1}y)$, can be thought of as a polynomial function on $Y/\Ga$.
The following central result from Geometric Invariant Theory (GIT) tells us when the other direction works as well (see \cite[Theorem 3.5]{Newstead}).
\begin{theorem}\label{Thm:GIT}
Let $\Ga$ and $Y$ be as above. Assume that all the orbits of $\Ga$ in $Y$ are closed. Then the orbit space $Y/\Ga$ is also an affine variety.
Moreover, we have an isomorphism $K[Y/\Ga]\cong K[Y]^{\Ga}$, and the natural map $Y\ra Y/\Ga$ corresponds to the inclusion of algebras $K[Y]^{\Ga}\ra K[Y]$.
We have a one to one correspondence between closed $\Ga$-stable subsets of $Y$ and closed subsets of $Y/\Ga$. 
Therefore, if $I\subseteq K[Y]$ is a radical $\Ga$-stable ideal of $Y$, then $I\neq 0$ if and only if $I^{\Ga}\neq 0$.
\end{theorem}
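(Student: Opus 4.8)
The plan is to prove this by reducing to the fundamental theorem of affine geometric invariant theory, whose two pillars are the finite generation of the invariant ring and the separation of disjoint closed invariant sets by invariant functions; both rest on the reductivity of $\Ga$, which in characteristic zero means that $\Ga$ is linearly reductive and hence admits a \emph{Reynolds operator} $R\colon K[Y]\ra K[Y]^{\Ga}$, a $K[Y]^{\Ga}$-linear projection onto the invariants.

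First I would establish that $A^{\Ga}$ is a finitely generated $K$-algebra, where $A=K[Y]$. This is the Hilbert--Nagata theorem: writing $A$ as a quotient of a polynomial ring on which $\Ga$ acts linearly, one combines the Reynolds operator with the Noetherian property of $A$ to deduce finite generation of the invariants. Granting this, I set $Y/\Ga:=\mathrm{Spec}\,A^{\Ga}$, which is therefore an affine variety, and the inclusion $A^{\Ga}\ra A$ induces a morphism $\pi\colon Y\ra Y/\Ga$. By construction $K[Y/\Ga]\cong K[Y]^{\Ga}$, and $\pi$ corresponds to this inclusion, giving the second sentence of the theorem.

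The heart of the argument is the separation lemma: if $Z_1,Z_2\subseteq Y$ are disjoint closed $\Ga$-stable subsets with ideals $I_1,I_2$, then $I_1+I_2=A$ by the Nullstellensatz, so I may write $1=g_1+g_2$ with $g_i\in I_i$; applying $R$ and using its $K[Y]^{\Ga}$-linearity produces an invariant $f\in A^{\Ga}$ vanishing on $Z_1$ and equal to $1$ on $Z_2$. Together with surjectivity of $\pi$ (which again follows from $R$: if a fibre were empty, then $1$ would lie in the ideal of $A$ generated by some maximal ideal $\mathfrak{m}$ of $A^{\Ga}$, and applying $R$ would force $1\in\mathfrak{m}$), this yields the order-preserving bijection between closed $\Ga$-stable subsets of $Y$ and closed subsets of $Y/\Ga$, sending a closed invariant $Z$ to $\pi(Z)$ and a closed $C$ to $\pi^{-1}(C)$.

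It remains to use the hypothesis that every orbit is closed. Two distinct orbits are then disjoint closed invariant sets, so the separation lemma distinguishes them by an invariant; hence $\pi$ is constant on orbits and injective on the set of orbits, so that, by surjectivity, $Y/\Ga$ is genuinely the orbit space. Finally, for the statement on ideals: a nonzero radical $\Ga$-stable ideal $I$ gives $V(I)\subsetneq Y$ a proper closed $\Ga$-stable subset, corresponding under the bijection to a proper closed subset of $Y/\Ga$, whose nonzero defining invariant $f\in A^{\Ga}$ vanishes on the $\Ga$-stable set $V(I)$; since $I$ is radical we get $0\neq f\in I\cap A^{\Ga}=I^{\Ga}$, while the converse is immediate from $I^{\Ga}\subseteq I$. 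I expect the finite generation step and the separation lemma to be the main obstacles, since these are exactly the points where reductivity is indispensable; everything afterwards is formal bookkeeping with the correspondence they produce.
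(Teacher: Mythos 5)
The paper does not actually prove this theorem: it is stated as a background result from the GIT literature, with a pointer to Newstead's book (Theorem 3.5 there), so there is no proof of the paper's own to compare yours against. Your argument is, in essence, the standard textbook proof that this citation stands for, and it is sound: linear reductivity in characteristic zero gives the Reynolds operator $R$; Hilbert--Nagata gives finite generation of $K[Y]^{\Gamma}$, so $\mathrm{Spec}\,K[Y]^{\Gamma}$ is an affine variety and the inclusion $K[Y]^{\Gamma}\subseteq K[Y]$ induces $\pi$; the separation lemma and the surjectivity of $\pi$ follow from $R$ exactly as you say; and the closed-orbit hypothesis is what upgrades the categorical quotient to an orbit space, since distinct orbits are then disjoint closed invariant sets, hence separated by invariants, making the fibres of $\pi$ single orbits. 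The final claim about radical $\Gamma$-stable ideals is also handled correctly, via $I^{\Gamma}=I\cap K[Y]^{\Gamma}$, reducedness of $K[Y]^{\Gamma}$, and the Nullstellensatz. One step you pass over as ``formal bookkeeping'' deserves a line: the asserted bijection requires that $\pi(Z)$ be closed in $Y/\Gamma$ for every closed $\Gamma$-stable $Z\subseteq Y$, and this is not automatic. It follows either from one more application of $R$ (the restriction map $K[Y]^{\Gamma}\to (K[Y]/I_Z)^{\Gamma}$ is surjective, whence $\pi(Z)=V\bigl(I_Z\cap K[Y]^{\Gamma}\bigr)$), or, under the closed-orbit hypothesis, from your own separation lemma applied to $Z$ and the orbit lying over any putative point of $\overline{\pi(Z)}\setminus\pi(Z)$. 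With that addition your proof is complete, and it is precisely the argument the paper outsources to Newstead.
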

The map $Y\to Y/\Ga$ satisfies a universal property with respect to morphisms of varieties $Y\to X$ which are invariant on $\Ga$-orbits (see \cite{Newstead} for more details).

In practice, a lot of the invariant rings which we shall encounter will be difficult to calculate explicitly.
The following proposition will be useful for reducing the variety and the group acting on it.
\begin{proposition}[Reduction of acting group]\label{prop:Reduction}
Let $\Ga$ be a reductive algebraic group acting on an affine variety $Y$. Assume that $N<\Ga$ is a closed reductive subgroup and that $Y'\subseteq Y$ is a closed subvariety
such that the following conditions hold:
\begin{enumerate}
\item All the orbits of $\Ga$ in $Y$ are closed. 
\item The subvariety $Y'$ is stable under the action of $N$.
\item For every $\Ga$-orbit $T$ in $Y$, the intersection $T\cap Y'$ is an $N$-orbit. in particular, every $\Ga$-orbit in $Y$ intersects $Y'$ non-trivially.
\end{enumerate}
Then the restriction of functions from $Y$ to $Y'$ induces an injective ring homomorphism $:\Phi:K[Y/\Ga]\cong K[Y]^{\Ga}\to K[Y']^N\cong K[Y'/N]$ which induces a bijection $Y'/N\to Y/{\Ga}$.
\end{proposition}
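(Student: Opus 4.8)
The plan is to realize the claimed map $\Phi$ as the comorphism of a bijective morphism between the two GIT quotients, and then to read off both the injectivity of $\Phi$ and the bijectivity of the induced map directly from hypothesis (3); essentially all the content is formal once (3) is in hand.

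First I would put the two quotients on the same footing. Since all $\Ga$-orbits in $Y$ are closed, Theorem \ref{Thm:GIT} applies to $\Ga$ acting on $Y$ and gives that $Y/\Ga$ is an affine variety with $K[Y/\Ga]\cong K[Y]^{\Ga}$, whose points are exactly the $\Ga$-orbits. To apply the same theorem to $N$ acting on $Y'$, I must check that every $N$-orbit in $Y'$ is closed. Let $O$ be an $N$-orbit in $Y'$ and let $T$ be the $\Ga$-orbit containing it. By (3) the intersection $T\cap Y'$ is a single $N$-orbit, and since $O\subseteq T\cap Y'$ this forces $O=T\cap Y'$. Now $T$ is closed in $Y$ by (1) and $Y'$ is closed in $Y$ by hypothesis, so $O=T\cap Y'$ is closed. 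As $N$ is reductive, Theorem \ref{Thm:GIT} then gives that $Y'/N$ is affine with $K[Y'/N]\cong K[Y']^N$.

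Next I would verify that $\Phi$, namely restriction of functions along the inclusion $Y'\hookrightarrow Y$, lands in $K[Y']^N$ and is injective. Restriction carries $\Ga$-invariant functions to $N$-invariant functions because $N\subseteq\Ga$ stabilizes $Y'$. For injectivity, suppose $f\in K[Y]^{\Ga}$ restricts to $0$ on $Y'$; given any $y\in Y$, hypothesis (3) provides a point $y'\in(\Ga\cdot y)\cap Y'$, and $\Ga$-invariance gives $f(y)=f(y')=0$, whence $f=0$. I would then produce the morphism on quotients: the composite $Y'\hookrightarrow Y\ra Y/\Ga$ is a morphism constant on $N$-orbits, so by the universal property of the categorical quotient $Y'\ra Y'/N$ it factors through a morphism $\bar\phi:Y'/N\ra Y/\Ga$ whose comorphism is exactly $\Phi$. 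On points $\bar\phi$ sends the $N$-orbit of $y'$ to $\Ga\cdot y'$; it is surjective by the ``in particular'' clause of (3) (every $\Ga$-orbit meets $Y'$), and injective because two points of $Y'$ lying in one $\Ga$-orbit $T$ both lie in $T\cap Y'$, which by (3) is a single $N$-orbit. Hence $\bar\phi$ is a bijection.

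The only step requiring genuine care is the closedness of $N$-orbits in $Y'$, which is precisely what licenses invoking the GIT theorem for the pair $(N,Y')$; the remainder is a formal consequence of (3). I would also emphasize a point that the statement is careful about: this argument yields a \emph{bijective morphism} $\bar\phi$, not in general an isomorphism of varieties (a bijective morphism need not be an isomorphism, even in characteristic zero), which is exactly why the proposition asserts only injectivity of $\Phi$ together with bijectivity of $\bar\phi$, rather than an isomorphism of the quotient varieties.
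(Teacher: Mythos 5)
Your proposal is correct and follows essentially the same route as the paper's own proof: restriction gives $\Phi$ landing in $K[Y']^N$ by hypothesis (2), injectivity follows because every $\Ga$-orbit meets $Y'$, closedness of $N$-orbits comes from identifying them (via hypothesis (3)) with intersections $T\cap Y'$ of two closed sets so that Theorem \ref{Thm:GIT} applies to both pairs, and the bijection $Y'/N\to Y/\Ga$ is obtained from the universal property of the quotient plus the same set-theoretic argument. Your closing remark that one only gets a bijective morphism, not an isomorphism, matches the caveat the paper records in the remark following the proposition.
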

\begin{proof}
Since a $\Ga$-invariant polynomial on $Y$ is in particular $N$-invariant, 
the restriction map \ $K[Y]\to K[Y']$ induces a ring homomorphism $\Phi:K[Y]^{\Ga}\to K[Y']^N$. The subgroup $N$ acts on $K[Y']$ since it acts on $Y'$, by the second assumption.
Since every $\Ga$-orbit in $Y$ intersects $Y'$ it follows that if the restriction of $f\in K[Y]^{\Ga}$ to $Y'$ is zero, then it is zero on all the $\Ga$-orbits in $Y$, 
and is therefore zero on $Y$. This implies that the map $\Phi$ is injective.

The orbits of $N$ in $Y'$ are the intersections of the orbits of $\Ga$ in $Y$ with $Y'$. 
Since the orbits of $\Ga$ in $Y$ are closed, the same is true for the orbits of $N$ in $Y'$ since they are an intersection of two closed subsets.
Since both groups are reductive we have affine quotient varieties $Y/\Ga$ and $Y'/N$, and isomorphisms $K[Y]^{\Ga}\cong K[Y/\Ga]$ and $K[Y']^N\cong K[Y'/N]$.
By the universal property of the variety $Y'/N$ the map $Y'\to Y\to Y/\Ga$ induces a map $Y'/N\to Y/\Ga$, for which $\Phi$ is the induced map on coordinate rings.
The conditions of the proposition imply that the induced map $Y'/N\to Y/G$ is bijective (this is a purely set-theoretical argument which does not use the additional structure of the groups and the varieties).
\end{proof}
\begin{remark}
It is possible that the map $\Phi$ from the lemma will be injective but not surjective.
Consider for example the variety $Y=\mathbb{A}^2 \backslash \{(0,y)|y\in K\} = \{(x,y)\in K^2|x\neq 0\}$ and the subvariety $Y'= \{(x,1)|x\neq 0\}\cup\{(1,0)\}$.
We define $\Ga=\mathbb{G}_m$ to be the multiplicative group of the field and we define an action of $\Ga$ on $Y$ by $t\cdot (x,y) = (tx,t^{-1}y)$. 
We define the subgroup $N$ to be the trivial group. Then it is easy to show that $\Ga,N,Y$ and $Y'$ satisfy the conditions of the lemma: 
all orbits of the action of $\Ga$ on $Y$ are of the form $O_c:=\{(x,y)|xy=c\}\cap Y$ for some $c\in K$ and are therefore closed.
Notice that the intersection with $Y$ is redundant for all $c\neq 0$, but not for $c=0$.
The subvariety $Y'$ is trivially stable under the action of $N$, and the intersection of $O_c$ with $Y'$ for $c\neq 0$ is $\{(c,1)\}$, and $O_0\cap Y' = \{(1,0)\}$.
However, $K[Y]^{\Ga} = K[xy]$ is a polynomial ring in one variable, while $K[Y']^N = K[x^{\pm 1}]\oplus K$ is strictly bigger then $K[Y]^{\Ga}$.
\end{remark}
\end{section}

\begin{section}{The variety of cocycle deformations}\label{sec:Variety}
 Let $H$ and $K$ be as before.
From Proposition \ref{prop:FormofW} we know that a cocycle deformation of $H$ is the same as an $H$-comodule algebra $W$ of dimension $n=dim(H)$, for which the map 
$$M:W\ot W\to W\ot H$$ \begin{equation}x\ot y\mapsto x\cdot y_1\ot y_2\end{equation} is invertible.
Take now $W=K^{n}$. We would like to describe the space of all possible cocycle deformation structures on $W$. 
For this, we start with the following affine space: $$\A_H = \Hom_K(W\ot W,W)\bigoplus \Hom_K(W\ot H,W\ot W)$$ \begin{equation}\bigoplus
\Hom_K(H^*\ot W,W)\end{equation}
Notice that the group $\Ga=GL(W)$ acts in a natural way on all the direct summands appearing in $\A$ by its diagonal action on $W$ and the trivial action on $H$. 
The group $\Ga$ therefore acts on $\A_H$ as well. 
We will write a point in $\A_H$ as $(m,T,A)$.
We will think of $m$ as the multiplication on $W$, on $A$ as the action of $H^*$ on $W$ (which contains the same information as a coaction $\rho:W\to W\ot H$) and on $T:W\ot H\to W\ot W$ as the inverse of the map $M$ defined above.
Of course, not every point in $\A_H$ will satisfy the necessary axioms for a cocycle deformation.
Let us denote by $Y_H\subseteq \A_H$ the subset of all points $(m,T,A)$ which do give on $W$ the structure of a cocycle deformation of $H$. 
We claim the following:
\begin{lemma}
The subset $Y_H$ is Zariski closed in $\A_H$ and is stable under the action of $\Ga$.
\end{lemma}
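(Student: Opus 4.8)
The plan is to read off from Proposition \ref{prop:FormofW} exactly which axioms a triple $(m,T,A)\in\A_H$ must satisfy in order to lie in $Y_H$, to observe that each such axiom is a closed polynomial condition, and to check that the $\Ga$-action merely transports one such structure to another. First I would fix once and for all a basis of $W=K^n$ and a basis of $H$, so that $m$, $T$ and $A$ are recorded by their structure constants, which are precisely the coordinates on $\A_H$. Writing $\rho:W\to W\ot H$ for the coaction encoded by $A$, the Galois map $M=(m\ot \mathrm{id}_H)(\mathrm{id}_W\ot\rho)$, $x\ot y\mapsto m(x\ot y_1)\ot y_2$, is then a fixed polynomial expression in the entries of $m$ and $A$. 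By Proposition \ref{prop:FormofW} the point $(m,T,A)$ lies in $Y_H$ precisely when: (i) $m$ is associative, $m(m\ot \mathrm{id}_W)=m(\mathrm{id}_W\ot m)$; (ii) $A$ makes $W$ an $H^*$-module, equivalently $\rho$ is a coassociative, counital coaction; (iii) $\rho$ is a morphism of algebras, which is the comodule-algebra compatibility relating $m$ and $A$; and (iv) $T$ is a two-sided inverse of $M$, i.e. $M\circ T=\mathrm{id}_{W\ot H}$ and $T\circ M=\mathrm{id}_{W\ot W}$. The hypothesis $W\neq 0$ of Proposition \ref{prop:FormofW} is automatic here, since $n=\dim H\geq 1$.

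Each of these conditions is an equality of linear maps assembled by composition and tensor product from $m,T,A$ and the fixed structure maps $\Delta,\epsilon,\mu_H,1_H$ of $H$; evaluating such an equality on the finitely many basis tensors turns it into finitely many polynomial equations in the coordinates of $\A_H$. For instance (i) is quadratic in the entries of $m$, while (iv) is bilinear in the entries of $T$ against the polynomial $M=M(m,A)$. Hence $Y_H$ is the common zero locus of a finite family of polynomials, and is therefore Zariski closed.

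The step I would treat most carefully — and which I expect to be the main obstacle — is that $\A_H$ carries no slot for a unit, so I must ensure that requiring $W$ to be a unital algebra does not secretly impose a non-closed existential condition. Here I would exploit the invertibility of $M$: since $(\mathrm{id}_W\ot\epsilon)M=m$ and $M\circ T=\mathrm{id}$, composing gives the identity $m\circ T=\mathrm{id}_W\ot\epsilon$, and from such relations together with $T\circ M=\mathrm{id}$ one shows that a necessarily unique unit exists and is determined algebraically by the remaining data. Thus unitality is forced on the closed locus cut out by (i)--(iv) and need not be imposed separately; making this derivation fully precise is the delicate part of the argument.

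Finally, for $\Ga$-stability I would argue by transport of structure. An element $g\in GL(W)$ sends $(m,T,A)$ to the triple with $m'=g\,m\,(g^{-1}\ot g^{-1})$, $A'=g\,A\,(\mathrm{id}_{H^*}\ot g^{-1})$ and $T'=(g\ot g)\,T\,(g^{-1}\ot \mathrm{id}_H)$, and the associated Galois map satisfies $M'=(g\ot\mathrm{id}_H)\,M\,(g^{-1}\ot g^{-1})$, so $g$ intertwines all the structure maps of the two triples. Consequently $g$ is an isomorphism of $H$-comodule algebras from the structure defined by $(m,T,A)$ to the one defined by $g\cdot(m,T,A)$, and each of the conditions (i)--(iv), together with the existence of a unit and the invertibility of $M$, is preserved. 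Therefore $g\cdot(m,T,A)\in Y_H$ whenever $(m,T,A)\in Y_H$, which shows that $Y_H$ is stable under $\Ga$.
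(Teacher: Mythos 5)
Your proof is correct and follows essentially the same route as the paper: the paper's proof likewise writes out the defining axioms (associativity of $m$, the $H^*$-module conditions, the comodule-algebra compatibility between $A$ and $m$, and $T$ being the two-sided linear inverse of $M$), observes that each is a polynomial identity in the structure constants and hence cuts out a Zariski closed set, and obtains $\Ga$-stability from the fact that the action respects composition of linear maps. The only place you go beyond the paper is the unitality discussion: the paper's proof silently takes for granted that these conditions already characterize cocycle deformation structures (i.e.\ that a unit need not be imposed separately), whereas you flag this as the delicate point and correctly indicate that the unit is forced by $T\circ M=\mathrm{id}$ together with the left $W$-linearity of $T$ and counitality, so your treatment is, if anything, more careful than the paper's on this point.
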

\begin{proof}
The conditions on the points in $Y_H$ are the following: 
\begin{enumerate}
 \item Associativity of $m$ :
 \begin{equation} m(m\ot Id_W) = m(Id_W\ot m):W\ot W\ot W\ra W.\end{equation} 
 \item Associativity of the action of $H^*$: 
 \begin{equation}A(Id_{H^*}\ot A) = A(m_{H^*}\ot Id_W):H^*\ot H^*\ot W\ra W.\end{equation}
 \item Compatibility between the action and the multiplication: 
 \begin{equation} m(A\ot A)(Id_{H^*}\ot\tau\ot Id_W)(\Delta_{H^*}\ot Id_{W\ot W})=\end{equation} \begin{equation*}A(Id_{H^*\ot W}\ot m):H^*\ot W\ot W\ra W\end{equation*}
 where $\tau:H^*\ot W\to W\ot H^*$ is the flip map.
 \item The map $T:W\ot H\to W\ot W$ is the linear inverse of the map $M$ described above. 
 \end{enumerate}
All the coefficients of the linear maps mentioned here can be written as polynomials in the coefficients of the linear maps $m,T$ and $A$.
As a result, $Y_H$ is Zariski closed.
Since the equations we have here are stable under the action of $\Ga$ (since this action respects compositions of linear maps), the subset $Y_H$ is also stable under the action of $\Ga$.
\end{proof}
The next two lemmas are crucial for the use of Theorem \ref{Thm:GIT}
\begin{lemma}
Two points in $Y_H$ determine isomorphic cocycle deformations if and only if they lie in the same $\Ga$-orbit. 
\end{lemma}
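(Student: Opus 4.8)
The plan is to prove both implications directly, treating a point $(m,T,A) \in Y_H$ as an encoding of an $H$-comodule algebra structure on the fixed vector space $W = K^n$: the multiplication is $m$, the $H$-coaction is recovered from the $H^*$-action $A$, and (by condition (4) defining $Y_H$, together with Proposition \ref{prop:FormofW}) the map $T$ is forced to be the linear inverse of $M$ and so carries no independent data. The heart of the matter is that an isomorphism of $H$-comodule algebras between two such structures is literally the same datum as an element of $\Ga = GL(W)$ carrying one point to the other.

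For the implication ($\Leftarrow$) I would start from $(m',T',A') = g\cdot(m,T,A)$ with $g \in GL(W)$. Unwinding the diagonal action gives $m' = g\circ m\circ(g^{-1}\ot g^{-1})$ and $A' = g\circ A\circ(Id_{H^*}\ot g^{-1})$, with $g$ acting trivially on the $H$- and $H^*$-tensor factors. These two identities say precisely that $g$ is simultaneously an algebra homomorphism $(W,m)\to(W,m')$ and a map intertwining the two $H^*$-actions, hence an $H$-comodule map; being invertible, $g$ is an isomorphism of $H$-comodule algebras. No separate check is needed for the $T$-component, since it is determined by $(m,A)$.

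For ($\Rightarrow$) I would take an isomorphism $\phi$ of $H$-comodule algebras between the two structures. As both live on $W = K^n$, $\phi$ is automatically a linear automorphism, so $\phi\in GL(W)$. Writing out that $\phi$ is an algebra map and an $H$-comodule map yields $\phi\circ m = m'\circ(\phi\ot\phi)$ and $\phi\circ A = A'\circ(Id_{H^*}\ot\phi)$; rearranged, these are exactly $\phi\cdot m = m'$ and $\phi\cdot A = A'$ in the sense of the $\Ga$-action. Again the $T$-components match automatically because each is determined by its own $(m,A)$, so $\phi\cdot(m,T,A) = (m',T',A')$ and the two points share a $\Ga$-orbit.

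The only genuinely substantive step, and the one I would be most careful about, is the identification of ``$H$-comodule map'' with ``$A$-equivariant map.'' This rests on the dictionary between the coaction $\rho$ and the operators $A_f: w\mapsto w_1 f(w_2)$ for $f\in H^*$ recorded in the preliminaries: a linear map commutes with the two coactions if and only if it commutes with all the $A_f$. Once this dictionary is in place, everything else is a formal matching of the defining equations of an $H$-comodule algebra isomorphism with the defining equations of the $GL(W)$-action on the summands of $\A_H$, and I do not expect any obstacle.
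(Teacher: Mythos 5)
Your proof is correct and follows essentially the same route as the paper's: both unwind the $\Ga$-action on the summands of $\A_H$ as exactly the commuting diagrams that define an isomorphism of $H$-comodule algebras, so that ``same orbit'' and ``isomorphic deformation'' become the same condition by inspection. The only difference is cosmetic --- the paper treats the $T$-component as one more intertwining condition (``similar statements hold for $T$ and for $A$''), while you dispose of it by noting that $T$ is the inverse of $M$ and hence determined by $(m,A)$; both are valid, and your observation is precisely what justifies the paper's implicit claim that a comodule algebra isomorphism automatically intertwines the $T$-maps.
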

\begin{proof} 
	Let $g\in \Ga$. We consider it as a linear isomorphism $g:W\to W$.
We claim that $g(m,T,A) = (m',T',A')$ if and only if $g$ is an isomorphism between the cocycle deformation structure defined by $(m,T,A)$ and the one defined by $(m',T',A')$. 
Indeed, $g(m)= m'$ means that the diagram 
\[ 
\xymatrix{
W\ot W\ar[r]^-m\ar[d]^{g\ot g} & W\ar[d]^g \\
W\ot W\ar[r]^-{m'} & W
}
\]
commutes, and similar statements hold for $T$ and for $A$. This shows us that two points $(m,T,A)$ and $(m',T',A')$ are in the same orbit if and only if there exists an isomorphism between $W$ considered as a cocycle deformation via $(m,T,A)$ and $W$ considered as a cocycle deformation via $(m',T',A')$. We are done.
\end{proof}
We therefore want to classify all the orbits of $\Ga$ in $Y_H$. 
To do so, we first prove the following:
\begin{lemma} All the stabilizers of the action of $\Ga$ on $Y$ are finite. 
\end{lemma}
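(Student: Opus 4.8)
The plan is to identify each stabilizer with an automorphism group that has already been shown to be finite. Fix a point $(m,T,A)\in Y_H$ and let $W=\allH$ denote the corresponding cocycle deformation. The first step is to observe that the previous lemma, read in the special case where the two chosen points coincide, says precisely that an element $g\in\Ga=GL(W)$ fixes $(m,T,A)$ if and only if $g$ is a self-isomorphism of the cocycle deformation structure determined by $(m,T,A)$. In other words, the stabilizer of $(m,T,A)$ is exactly the group of automorphisms of $W$ as an $H$-comodule algebra.

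Before invoking finiteness I would check that respecting the third coordinate $T$ imposes no condition beyond respecting $m$ and $A$. This is immediate: the map $M$ is built solely from the multiplication $m$ and the coaction $\rho$ (equivalently $A$), and $T=M^{-1}$, so any $g$ commuting with both $m$ and $A$ automatically commutes with $M$, hence with its inverse $T$. Thus the stabilizer is genuinely the full group of $H$-comodule algebra automorphisms of $\allH$, with no auxiliary constraint coming from $T$.

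Finally I would apply Lemma \ref{lemma:FiniteAutomorphisms}, which identifies this automorphism group canonically with the group of characters of the finite dimensional Hopf algebra $\alHal$. Since a finite dimensional algebra admits only finitely many algebra homomorphisms to $K$ (the characters correspond to grouplike elements of the dual coalgebra, which are linearly independent and hence at most $\dim\alHal$ in number), this group is finite, and therefore so is the stabilizer, as claimed. The argument amounts to assembling results already established, so there is no real obstacle; the only point requiring a moment's care is the reduction in the second step, namely that the stabilizer condition on the triple $(m,T,A)$ collapses to a condition on the pair $(m,A)$ alone.
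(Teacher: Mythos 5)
Your proof is correct and follows essentially the same route as the paper: identify the stabilizer of $(m,T,A)$ with the automorphism group of the corresponding cocycle deformation (via the previous lemma), then invoke Lemma \ref{lemma:FiniteAutomorphisms} to conclude finiteness. Your extra observation that fixing $T=M^{-1}$ is automatic once $g$ fixes $m$ and $A$ is a nice point of care that the paper leaves implicit, but it does not change the argument.
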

\begin{proof}
By definition $Stab_{\Ga}((m,T,A)) = \{g\in \Ga| g(m,T,A) = (m,T,A)\}$. Using the discussion in the proof of the previous lemma, this subgroup of $\Ga$ can also be understood as the automorphism group of the cocycle deformation defined by $(m,T,A)$.
But we have seen, in Lemma \ref{lemma:FiniteAutomorphisms} that all such automorphism groups are finite. This implies that the dimensions of all orbits is exactly the dimension of $\Ga$ (which is $n^2$).
This also implies that all the orbits are closed, since if $\Ow\subseteq Y$ is any orbit, then $\overline{\Ow}$ is the union of $\Ow$ with orbits of smaller dimensions. 
Since there are no orbits of smaller dimensions, we deduce that $\overline{\Ow}=\Ow$.
\end{proof}
Theorem \ref{Thm:GIT} gives us then an isomorphism $K[Y_H]^{\Ga}\cong K[Y_H/\Ga]$. We will henceforth write $X_H=Y_H/\Ga$, and consider this as an affine variety.
Our next goal is therefore to describe $K[Y_H]^{\Ga}=K[X_H]$.

\begin{remark}
Since we know that cocycle deformations are necessarily of the form $\alH$, it is also possible to describe the space $X_H$ in a different way:
We can define \begin{equation}Z = \{\alpha\in (H\ot H)^*|\alpha\text{ is a 2-cocycle}\},\end{equation} and define an action of the algebraic group $(H^*)^{\times}$ by \begin{equation}\nu\cdot\alpha(x,y) = \nu(x_1)\nu(x_2)\alpha(x_2,y_2)\nu^{-1}(x_3y_3)\end{equation} where $\nu\in (H^*)^{\times}$.
The set of orbits $Z/(H^*)^{\times}$ will then be in one to one correspondence with the points of $X_H$. However, the group $(H^*)^{\times}$ might be more complicated than the group $GL(W)$, and in case $H$ is not semisimple it
will not even be reductive. For that reason we will concentrate on forming the quotient $Y/\Ga$ and not $Z/(H^*)^{\times}$. In the case of group algebras we will reduce the group $GL(W)$ to $(H^*)^{\times}$, see Section \ref{sec:Examples}.\end{remark}
\end{section}
 
\begin{section}{The algebra of invariants $K[X_H]$}\label{sec:Invariants}
In this section we will use invariant theory in order to give a full description of the ring of invariant functions on the variety $Y_H$ described in Section \ref{sec:Variety}.
We write here $\{h_1,\ldots h_n\}$ for a basis of $H$, and denote the dual basis of $H^*$ by $\{f_1,\ldots, f_n\}$.
Writing $H=\oplus_i Kh_i$ enables us to write the affine space $\A_H$ from Section \ref{sec:Variety} as 
\begin{equation}\A_H=W^{1,2}\bigoplus \oplus_{i=1}^n W^{2,1}\bigoplus \oplus_{i=1}^n W^{1,1}\end{equation}
where $W^{p,q}= W^{\ot p}\ot (W^*)^{\ot q}$ for $p,q\in\N$. In the sequel we will use freely the identification $W^{p,q}\cong \Hom_K(W^{\ot q},W^{\ot p})$. In particular $W^{p,p}\cong \End(W^{\ot p})$.
The first direct summand in the decomposition of $\A_H$ corresponds to the multiplication $m$ on $W$, the $n$ middle factors correspond to the operators $T_{h_i}$ given by $T_{h_i}(w) = T(w\ot h_i)$, 
and the last $n$ direct summands correspond to the operators $A_{f_i}$, given by $w\mapsto w_1f_i(w_2)$. 

We next rewrite the space $\A_H$ as $\A_H=\oplus_{i=1}^{2n+1} W^{p_i,q_i}$, where $(p_i,q_i)\in \{(1,1),(1,2),(2,1)\}$.
We then write an element $v=(m,T,A)\in \A_H$ as $v=(x_1,\ldots x_{2n+1})$ where we understand the elements $x_i\in W^{p_i,q_i}$ to be in one of the different direct summands of $\A_H$.
We describe the algebras $T(\A_H^*)$, $K[\A_H]$, and $K[Y_H]$, and the corresponding algebras of invariants.

\subsection{The algebras $T(\A^*_H)$ and $T(\A^*_H)^{\Ga}$}
The algebra $T(\A_H^*)$ has a direct sum decomposition \begin{equation}T(\A_H^*)=\oplus_{m=0}^{\infty} (\A_H^*)^{\ot m}.\end{equation}
The direct sum decomposition of $\A_H$ enables us to write a direct sum decomposition of $(\A_H^*)^{\ot m}$ as 
$$\bigoplus_{i_1,\ldots i_m} (W^{p_{i_1},q_{i_1}})^*\ot\cdots\ot (W^{p_{i_m},q_{i_m}})^*\cong$$
\begin{equation} \bigoplus_{i_1,\ldots i_m} (W^{p_{i_1}+\ldots +p_{i_m},q_{i_1}+\ldots + q_{i_m}})^*.\end{equation} 
This direct sum decomposition is stable under the action of $\Ga$. 
Thus, in order to calculate the subalgebra of invariants in $T(\A_H^*)$ we need to calculate the
invariant subspace of $(W^{p,q})^*$ for $p,q\in\N$. 
To describe these subspaces, we need some notations. 
For $p\in \N$ we define \begin{equation}\phi_p:KS_p\to \End(W^{\ot p})\cong W^{p,p},\end{equation}
to be the ring homomorphism which sends a permutation $\sigma\in S_p$ to the linear operator $L_{\sigma}:W^{\ot p}\to W^{\ot p}$ given by 
\begin{equation}L_{\sigma}(w_1,\ldots w_p) = w_{\sigma^{-1}(1)}\ot\cdots\ot w_{\sigma^{-1}(p)}.\end{equation}
In case $p>n$ we write \begin{equation}A_{n+1} = \frac{1}{(n+1)!}\sum_{\sigma\in S_{n+1}}(-1)^{\sigma}\sigma\end{equation} for the anti-symmetrizier in the group algebra of $S_{n+1}<S_p$.
We claim the following:
\begin{proposition}
1. The space of $\Ga$-invariants $((W^{p,q})^*)^{\Ga}$ is zero when $p\neq q$. \\
2. If $p=q$ the $\Ga$-invariants in $(W^{p,p})^*$ are spanned by the elements $T\mapsto Tr_{W^{\ot p}}(L_{\sigma}T)$ for $\sigma\in S_p$.\\
3.The kernel of the homomorphism $\phi_p$ is generated (as a two-sided ideal) by the element $A_{n+1}$ in case $p>n$, and is zero otherwise. 

\end{proposition}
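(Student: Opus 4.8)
The plan is to prove the three statements separately, since they concern rather different phenomena: parts 1 and 2 are about the first fundamental theorem of invariant theory for $GL(W)$, while part 3 is about the structure of the image of the group algebra $KS_p$ in $\End(W^{\ot p})$, i.e. Schur--Weyl type considerations.

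\textbf{Parts 1 and 2.} The key observation is that $(W^{p,q})^* \cong (W^*)^{\ot p}\ot W^{\ot q}$, and since $GL(W)$ acts diagonally on $W$ with the contragredient action on $W^*$, these are precisely the mixed tensor spaces to which classical invariant theory applies. First I would invoke the First Fundamental Theorem for $GL(W)$: the space of $GL(W)$-invariants in $(W^*)^{\ot p}\ot W^{\ot q}$ is zero unless $p=q$, and when $p=q$ it is spanned by the ``complete contraction'' tensors indexed by bijections between the $W$-factors and the $W^*$-factors, i.e. by permutations $\sigma\in S_p$. Concretely, identifying $(W^{p,p})^* \cong \End(W^{\ot p})^*$ via the trace pairing $\langle \phi, T\rangle = Tr_{W^{\ot p}}(\phi T)$, the invariant functionals are exactly $T\mapsto Tr_{W^{\ot p}}(L_\sigma T)$. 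This is a standard consequence of the FFT (see Weyl, or the tensor-invariant formulation in terms of the commutant), and I would cite it rather than reprove it. The statement that invariants vanish for $p\neq q$ follows because $GL(W)$ contains the scalar matrices $\lambda\cdot Id$, which act on $(W^*)^{\ot p}\ot W^{\ot q}$ by $\lambda^{q-p}$; a nonzero invariant forces $q-p=0$.

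\textbf{Part 3.} Here the claim is that $\ker\phi_p$ is the two-sided ideal generated by the antisymmetrizer $A_{n+1}$ when $p>n=\dim W$, and is zero when $p\le n$. The injectivity for $p\le n$ is classical: the operators $L_\sigma$ for distinct $\sigma$ are linearly independent on $W^{\ot p}$ as long as $\dim W \ge p$, which one sees by evaluating on tensors $e_{i_1}\ot\cdots\ot e_{i_p}$ with distinct basis indices and reading off the permutation. For $p>n$, I would argue as follows. The element $A_{n+1}$ acts as the projection onto the image of the full antisymmetrization $\Lambda^{n+1}W$ inside $W^{\ot(n+1)}$, which is zero because $\Lambda^{n+1}W=0$ when $n+1>\dim W=n$; hence $A_{n+1}\in\ker\phi_p$, and so is the two-sided ideal $(A_{n+1})$ it generates. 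For the reverse inclusion I would use the structure theory of $KS_p$: since $\mathrm{char}\,K=0$, the group algebra $KS_p$ is semisimple and decomposes as a product of matrix blocks indexed by partitions $\lambda\vdash p$, with the two-sided ideals corresponding to unions of blocks. The classical Schur--Weyl description of $\ker\phi_p$ says that $\phi_p$ kills exactly those blocks whose Young diagram has more than $n=\dim W$ rows. The remaining task is to show that the ideal generated by $A_{n+1}$ is precisely the sum of those blocks, i.e.\ that a Young symmetrizer for $\lambda$ lies in $(A_{n+1})$ if and only if $\lambda$ has a column of length $>n$, equivalently more than $n$ rows. One direction uses that a diagram with more than $n$ rows contains an antisymmetrization over $n+1$ indices, so its Young symmetrizer is already in $(A_{n+1})$; the other uses that for $\lambda$ with at most $n$ rows the corresponding block survives under $\phi_p$, so it cannot lie in $\ker\phi_p\supseteq(A_{n+1})$.

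\textbf{Main obstacle.} The genuinely substantive point is the second inclusion $\ker\phi_p\subseteq(A_{n+1})$ in part 3, i.e.\ identifying the kernel precisely with the ideal generated by a single antisymmetrizer. The inclusion $(A_{n+1})\subseteq\ker\phi_p$ is immediate, but pinning down that nothing more than $(A_{n+1})$ is killed requires the representation-theoretic fact that the $GL(W)$-modules $\mathbb{S}_\lambda(W)$ are nonzero exactly when $\lambda$ has at most $\dim W$ rows, together with the combinatorial identification of $(A_{n+1})$ as the sum of the bad blocks. I would either cite this Schur--Weyl input directly or, if a self-contained argument is wanted, prove it by showing any $\lambda$ with more than $n$ rows has its Young symmetrizer factor through an antisymmetrization of $n+1$ entries (placing these in a single column and using that $A_{n+1}$ generates all such column antisymmetrizers up to the $S_p$-action built into the two-sided ideal).
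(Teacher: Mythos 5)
Your proposal is correct and follows essentially the same route as the paper: the scalar-matrix argument for part 1, the appeal to classical Schur--Weyl duality/FFT (Weyl, Procesi) for part 2, and for part 3 the same three ingredients — linear independence of the $L_\sigma$ evaluated on distinct basis vectors when $p\le n$, vanishing of $A_{n+1}$ via $\Lambda^{n+1}W=0$, and the identification of $\ker\phi_p$ with the ideal spanned by the blocks/Young symmetrizers of partitions with more than $n$ rows, each of which lies in the ideal generated by $A_{n+1}$. The only cosmetic difference is that you phrase the reverse inclusion in the language of matrix blocks of the semisimple algebra $KS_p$, while the paper cites the corresponding statements from Procesi and Fulton--Harris directly.
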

\begin{proof}
The first claim is clear, since in case $p\neq q$ we can just consider the action of the nonzero scalar matrices in $\Ga$ on $W^{p,q}$ to deduce that the space of invariants is zero.
The second claim is Schur-Weyl Duality (see the discussion preceeding Theorem 1.1 in \cite{Procesi}, which is in turn based on Chapter IV of the book \cite{Weyl}), 
where we use the identification of $W^{p,p}$ with its dual, using the trace form. 
 For the third claim we proceed as follows: if $p\leq n$ then if $\{w_1,\ldots w_2,\ldots w_n\}$ is a basis of $W$ then the elements $\{\phi_p(\sigma)(w_1\ot w_2\ot\cdots\ot w_p)\}_{\sigma\in S_p}$ are linearly independent. This implies that the kernel of $\phi_p$ must be trivial.
If on the other hand $p>n$ then it holds that $A_{n+1}$ is in the kernel of $\phi_p$. To show this, we use again the basis $\{w_1\ldots w_n\}$ for $W$. It then holds that $$A_{n+1}(w_{i_1}\ot\cdots\ot w_{i_{n+1}}) =0$$ because the set $w_{i_1},\ldots, w_{i_{n+1}}$ contains a repetition for every multi-index $\{i_1,\ldots i_{n+1}\}$. This implies that $\phi_p(A_{n+1})=0$. On the other hand, it is known that the kernel of $\phi_p$ is spanned by the Young symmetrizers relative to diagrams $\lambda$ with at least $n+1$ rows (see Theorem 4.3 of \cite{Procesi}). The fact that $\lambda$ contains at least $n+1$ rows implies that the Young symmetrizer which corresponds to $\lambda$ is contained in the two-sided ideal generated by $A_{n+1}$ (see the elements $b_{\lambda}$ in Lecture 4 of \cite{Fulton-Harris}).
\end{proof}

Let us conclude this discussion by describing the algebra $T(\A_H^*)$ in terms of generators and relations:
For every multi-index $\tb{i}=(i_1,\ldots i_m)$ such that $p_{i_1}+\cdots +p_{i_m} = q_{i_1}+\cdots + q_{i_m}=r$ and a permutation $\sigma\in S_r$ we have an invariant element \begin{equation}t(\sigma,\tb{i})\in T(\A_H^*)\end{equation} (the $t$ stands here for tensor).
For an element  $\tb{x}^1\ot\cdots\ot \tb{x}^m\in (\A_H)^{\ot m}$ where $\tb{x}^i = (x^i_1,\ldots x^i_{2n+1})\in \A_H$ this invariant 
is given explicitly by \begin{equation}t(\sigma,\tb{i})(\tb{x}^1\ot\cdots\ot\tb{x}^m)=Tr_{W^{\ot r}}(L_{\sigma}(x^1_{i_1}\ot\cdots\ot x^m_{i_m})).\end{equation}
For a fixed multi-index $\tb{i}$, if $r>n$ these invariants will satisfy linear relations among them arising from the image of $A_{n+1}$ under $\phi_r$.
In other words, for every two permutations $\sigma,\tau\in S_r$ we will get the linear relation 
\begin{equation}\label{eq:Alternation}\frac{1}{(n+1)!}\sum_{\nu\in S_{n+1}}(-1)^{\nu}t(\sigma\nu\tau,\tb{i})=0.\end{equation}
The multiplication of two such $t$-invariants will again be a $t$-invariant and we have the formula
\begin{equation}\label{eq:Multiplication}t(\sigma,\tb{i})\cdot t(\tau,\tb{j}) = t((\sigma,\tau),\tb{i}\cdot\tb{j}).\end{equation}
Here $\sigma \in S_r$ where $r=p_{i_1}+\cdots p_{i_m}= q_{i_1}+\cdots q_{i_m}$ and $\tau\in S_l$ where $l=p_{j_1}+\cdots p_{j_{m'}} = q_{j_1}+\cdots q_{j_{m'}}$,
$(\sigma,\tau)\in S_{r+l}$ is the permutation which corresponds to the canonical embedding of $S_r\times S_l$ in $S_{r+l}$, and $\tb{i}\cdot\tb{j}$ is the concatenation of $\tb{i}$ and $\tb{j}$.
\begin{remark}
Notice that we have used here the fact that the canonical isomorphism $(W^{p,q})^*\cong W^{q,p}$ given by the pairing $(v_1,v_2)\mapsto Tr(v_1v_2)$ is also a $\Ga$-isomorphism.
\end{remark}
Let us conclude the above discussoin in the following Lemma
\begin{lemma}
The algebra $T(\A^*_H)^{\Ga}$ is generated by the elements $t(\sigma,\tb{i})$ subject to the relations \ref{eq:Alternation} and \ref{eq:Multiplication}.
\end{lemma}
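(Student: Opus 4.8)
The plan is to establish the presentation in the usual three steps: that the $t(\sigma,\tb{i})$ span $T(\A_H^*)^{\Ga}$, that the relations \ref{eq:Alternation} and \ref{eq:Multiplication} genuinely hold among them, and --- the substantive part --- that every linear relation among the $t(\sigma,\tb{i})$ is a consequence of these two families. Here I read ``subject to the relations'' as the statement that the tautological surjection from the algebra presented by the generators $t(\sigma,\tb{i})$ and the relations \ref{eq:Alternation}, \ref{eq:Multiplication} onto $T(\A_H^*)^{\Ga}$ is an isomorphism.

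Spanning and validity of the relations are the routine parts. Because the $\Ga$-action preserves the decomposition $(\A_H^*)^{\ot m} = \bigoplus_{\tb{i}}(W^{p_{i_1},q_{i_1}})^*\ot\cdots\ot(W^{p_{i_m},q_{i_m}})^*$ and, by part 1 of the preceding Proposition, only the summands with $\sum_j p_{i_j} = \sum_j q_{i_j} =: r$ carry invariants, part 2 of that Proposition identifies the invariants in each such summand with the span of the $t(\sigma,\tb{i})$; summing over $m$ and $\tb{i}$ gives the spanning statement. Relation \ref{eq:Multiplication} holds because the tensor-algebra product is the tensor product of the corresponding endomorphisms and $Tr_{W^{\ot r}}(L_\sigma X)\,Tr_{W^{\ot l}}(L_\tau Y) = Tr_{W^{\ot(r+l)}}(L_{(\sigma,\tau)}(X\ot Y))$, where $X\ot Y$ is exactly the endomorphism attached to the concatenation $\tb{i}\cdot\tb{j}$. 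Relation \ref{eq:Alternation} holds because $\frac{1}{(n+1)!}\sum_{\nu\in S_{n+1}}(-1)^\nu L_{\sigma\nu\tau} = L_\sigma\,\phi_r(A_{n+1})\,L_\tau$, which vanishes for $r>n$ by part 3 of the Proposition, and an operator equal to $0$ contributes $0$ inside every trace.

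For completeness I would proceed block by block. First, relation \ref{eq:Multiplication} lets one collapse any product of generators into a single $t((\sigma_1,\ldots,\sigma_k),\tb{i}_1\cdots\tb{i}_k)$, so modulo \ref{eq:Multiplication} every element is a linear combination $\sum_{\sigma,\tb{i}} c_{\sigma,\tb{i}}\,t(\sigma,\tb{i})$ of single generators. Since the summands indexed by distinct multi-indices $\tb{i}$ are linearly independent inside $(\A_H^*)^{\ot m}$, any relation splits into a separate relation $\sum_\sigma c_{\sigma,\tb{i}}\,t(\sigma,\tb{i}) = 0$ for each fixed $\tb{i}$. Now fix $\tb{i}$ with total $r$: as the chosen components $x^j_{i_j}$ range over their factors, the endomorphisms $x^1_{i_1}\ot\cdots\ot x^m_{i_m}$ exhaust $\End(W^{\ot r})$, so the vanishing says that $Tr_{W^{\ot r}}\big((\sum_\sigma c_{\sigma,\tb{i}} L_\sigma) X\big) = 0$ for all $X\in\End(W^{\ot r})$. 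Nondegeneracy of the trace form then forces $\sum_\sigma c_{\sigma,\tb{i}} L_\sigma = 0$, i.e. $\sum_\sigma c_{\sigma,\tb{i}}\,\sigma \in \ker\phi_r$. By part 3 of the Proposition $\ker\phi_r$ is the two-sided ideal generated by $A_{n+1}$, hence a $K$-linear combination of elements $\sigma A_{n+1}\tau$; writing the relation in this form exhibits it as a consequence of \ref{eq:Alternation}. This shows every relation follows from \ref{eq:Alternation} and \ref{eq:Multiplication}, completing the presentation.

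The genuinely load-bearing step --- and the one I would write out most carefully --- is the identification, for each fixed $\tb{i}$, of the space of linear relations among the functionals $t(\sigma,\tb{i})$ with $\ker\phi_r$: this is where the second fundamental theorem of invariant theory (part 3 of the Proposition, $\ker\phi_r = \langle A_{n+1}\rangle$) does the real work, translating an abstract ``vanishes as a functional'' condition into the explicit ideal generated by $A_{n+1}$, via nondegeneracy of the trace pairing together with the fact that the inputs sweep out all of $\End(W^{\ot r})$. A secondary point that must be checked, but is straightforward, is that relations never mix distinct multi-indices, which is exactly what licenses the block-by-block reduction.
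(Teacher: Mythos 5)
Your proof is correct and follows essentially the same route as the paper, which states this lemma as a summary of the preceding discussion: spanning via parts 1--2 of the Proposition (Schur--Weyl duality), validity of the relations via $\phi_r(A_{n+1})=0$ and the trace identity for tensor products, and completeness via the second fundamental theorem ($\ker\phi_r=\langle A_{n+1}\rangle$). The only difference is that you spell out the completeness step (block-by-block splitting, trace nondegeneracy) that the paper leaves implicit, which is a welcome clarification rather than a deviation.
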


\subsection{The algebras $K[\A_H]$ and $K[\A_H]^{\Ga}$}
The algebra $K[\A_H]$ is just the symmetric algebra $S[\A_H^*]$ since $\A_H$ is an affine space. This algebra can be described as a quotient $\pi_1:T(\A_H^*)\to S[\A_H^*]$. 	
Since the group $\Ga$ is reductive we get a surjective algebra homomorphism which we denote by the same letter $\pi_1:T(\A_H^*)^{\Ga}\to S[\A_H^*]^{\Ga}$.
The image of the element $t(\sigma,\tb{i})\in T(\A_H^*)^{\Ga}$ under $\pi_1$ is a polynomial function $p(\sigma,\tb{i})$ on $\A_H$. Since this polynomial is the image of $t(\sigma,\tb{i})$ it is given by the formula
$$p(\sigma,\tb{i})(x_1,\ldots, x_{2n+1})= t(\sigma,\tb{i})(\tb{x}\ot\cdots\ot\tb{x})=$$\begin{equation} Tr(L_{\sigma}(x_{i_1}\ot\cdots\ot x_{i_m}))\end{equation}
where $\tb{x} = (x_1,\ldots, x_{2n+1})$. 
The algebras $T(\A_H^*)$ and $S[\A_H^*]$ are both graded by $\N$. Since $S[\A_H^*]$ is the quotient of $T(\A_H^*)$ by the ideal generated by all the elements of the form $xy-yx$ where $x,y\in \A_H^*$, it holds that $$S[\A_H^*]_m = (T(\A_H^*)_m)_{S_m},$$ the coinvariants with respect to the natural action of the symmetric group $S_m$ on $(\A_H^*)^{\ot m}$. Since the actions of $\Ga$ and of $S_m$ on $(\A_H^*)^{\ot m}$ commute, it follows that 
$$S[\A_H^*]_m^{\Ga} =  ((T(\A_H^*)_m)_{S_m})^{\Ga} = (T(\A_H^*)_m^{\Ga})_{S_m}.$$ 
This means that in order to describe the algebra $S[\A_H^*]^{\Ga}$ by generators and relations we need to understand the action of the symmetric group $S_m$ on the generators $t(\sigma,\tb{i})$ of $T(\A_H^*)^{\Ga}$.

In order to describe this action, we begin with the following combinatorial definition.
\begin{definition}
Let $\tau\in S_m$, and let $c_1,\ldots ,c_m$ be a sequence of positive integers such that $\sum c_i=r$.
Write $$\{1,\ldots,r\} = I_1\sqcup I_2\sqcup\cdots\sqcup I_m = J_1\sqcup J_2\sqcup \cdots \sqcup J_m$$ where $I_1=\{1,\ldots c_1\}, I_2 =\{c_1+1,\ldots c_1+c_2\}$ and so on, and $J_1 = \{1,2,\ldots c_{\tau(1)}\}, J_2=\{c_{\tau(1)}+1,\ldots c_{\tau(1)}+c_{\tau(2)}\}$ and so on.
Then we define a permutation $\tau_{(c_i)}\in S_r$ to be the unique permutation which maps $J_i$ onto $I_{\tau(i)}$ in a monotonuous way.\end{definition}
We next prove the following lemma:
\begin{lemma}
Let $\tau\in S_m$. Assume that $a_1,\ldots a_m,b_1,\ldots b_m$ are integers such that $\sum_i a_i = \sum_i b_i = r$.
Write $\tau_1 = (\tau_{(a_i)})^{-1}$ and $\tau_2 = \tau_{(b_i)}$. Then it holds that 
$$L_{\tau_1}(y_1\ot y_2\ot\cdots\ot y_m)L_{\tau_2} = y_{\tau(1)}\ot\cdots\ot y_{\tau(m)}$$
For every $y_1\in W^{a_1,b_1},\ldots,y_m\in W^{a_m,b_m}$.
\end{lemma}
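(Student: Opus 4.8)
The plan is to read this as a purely combinatorial identity about how block-permutation operators act on a tensor product of rectangular operators. Under the identification $W^{a_i,b_i}\cong\Hom_K(W^{\ot b_i},W^{\ot a_i})$, the operator $y_1\ot\cdots\ot y_m\in\End(W^{\ot r})$ applies $y_i$ to the $i$-th block of its input (the block $I_i$ of size $b_i$ in the $b$-partition) and produces the $i$-th block of its output (the block $I_i$ of size $a_i$ in the $a$-partition). The assertion is that pre-composing with $L_{\tau_2}$ and post-composing with $L_{\tau_1}$ simply relabels these blocks by $\tau$. First I would reduce to decomposable $y_i$, i.e. $y_i=v_i\ot\eta_i$ with $v_i\in W^{\ot a_i}$ and $\eta_i\in(W^*)^{\ot b_i}$: this is legitimate because both sides of the claimed identity are $K$-linear in each $y_i$ and such tensors span $W^{a_i,b_i}$. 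For decomposable arguments $y_1\ot\cdots\ot y_m$ is the operator $v\mapsto\big(\prod_i\eta_i(v|_{I_i})\big)\,(v_1\ot\cdots\ot v_m)$, where $v|_{I_i}$ is the restriction of the input to the $i$-th block of the $b$-partition; this makes the block structure fully explicit.

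The heart of the argument is to trace an input tensor $w_1\ot\cdots\ot w_r$ through the three maps, using the convention $L_\sigma(w_1\ot\cdots\ot w_r)=w_{\sigma^{-1}(1)}\ot\cdots\ot w_{\sigma^{-1}(r)}$, so that $L_\sigma$ sends the factor in position $x$ to position $\sigma(x)$. By the definition of $\tau_2=\tau_{(b_i)}$, which carries the block $J_i$ (of size $b_{\tau(i)}$) monotonically onto the block $I_{\tau(i)}$ of the $b$-partition, applying $L_{\tau_2}$ moves the $i$-th input block into precisely the slot on which $y_{\tau(i)}$ acts; monotonicity guarantees the internal order of the block is preserved, so $y_{\tau(i)}$ is applied with no internal scrambling. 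After $y_1\ot\cdots\ot y_m$, the image of $y_{\tau(i)}$ occupies the block $I_{\tau(i)}$ of the $a$-partition. Finally, since $\tau_1=(\tau_{(a_i)})^{-1}$ and $\tau_{(a_i)}$ carries $J_i$ onto $I_{\tau(i)}$ of the $a$-partition, the operator $L_{\tau_1}$ carries $I_{\tau(i)}$ back onto $J_i$, collecting the image of $y_{\tau(i)}$ into the $i$-th output block. Reading off the three steps, the composite applies $y_{\tau(i)}$ to the $i$-th input block and places its image in the $i$-th output block, which is exactly $y_{\tau(1)}\ot\cdots\ot y_{\tau(m)}$.

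The only real obstacle is keeping the conventions consistent, since two inverses interact: the $\sigma^{-1}$ built into $L_\sigma$ and the explicit inverse in $\tau_1=(\tau_{(a_i)})^{-1}$. The reason the inverse appears on the covariant (output, $a$) side but not on the contravariant (input, $b$) side is that $L_{\tau_2}$ rearranges the source of $y_1\ot\cdots\ot y_m$ while $L_{\tau_1}$ rearranges its target, and these enter the composition on opposite sides. To pin down the conventions without error I would verify the computation once on the smallest nontrivial case $m=2$, $\tau=(12)$, checking directly that $L_{(\tau_{(a_i)})^{-1}}(y_1\ot y_2)L_{\tau_{(b_i)}}=y_2\ot y_1$, and then note that the general case is the identical block-by-block bookkeeping. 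No deeper ingredient is required beyond the multilinear reduction and the defining monotonicity of $\tau_{(c_i)}$.
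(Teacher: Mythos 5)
Your proof is correct and is essentially the paper's own argument: the paper proves the lemma by the same block-tracking bookkeeping, viewing each $y_i$ as a ``blackbox'' with $b_i$ input bits and $a_i$ output bits, with $L_{\tau_2}$ routing the $i$-th input block to the box $y_{\tau(i)}$ and $L_{\tau_1}$ routing its output back to the $i$-th output block. Your reduction to decomposable tensors is merely a slightly more formal way of making the same wire-tracing rigorous.
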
 
\begin{proof}
The easiest way to see this is to think of the tensors as ``bits'', and of $y_{i}$ as a blackbox with $b_i$ ``input bits'' and $a_i$ ``output bits''. 
In this terminology, we see that the right hand side takes the first $b_{\tau(1)}$ bits via $y_{\tau(1)}$ and outputs $a_{\tau(1)}$ bits, while the left hand side takes the first $b_{\tau(1)}$ bits first to the input bits of $y_{\tau(1)}$, then applies $y_{\tau(1)}$ and apply another permutation to bring the output back to the first $a_{\tau(1)}$ bits, and so the action on the first $b_{\tau(1)}$ bits is the same for both sides of the equation. A similar phenomenon happens to the next $b_{\tau(2)}$, then to the next $b_{\tau(3)}$ bits and so on.
\end{proof}
Using the last lemma and its terminology, we can now calculate explicitly the action of $\tau\in S_m$ on $t(\sigma,\tb{i})$.
We have:
$$(\tau\cdot t(\sigma,\tb{i}))(\tb{x}^1\ot\cdots\ot\tb{x}^m) = t(\sigma,\tb{i})(\tau^{-1}\cdot(\tb{x}^1\ot\cdots\ot\tb{x}^m)) = $$
$$t(\sigma,\tb{i})(\tb{x}^{\tau(1)}\ot\cdots\ot\tb{x}^{\tau(m)}) = Tr_{W^{\ot r}}(L_{\sigma}x^{\tau(1)}_{i_1}\ot\cdots\ot x^{\tau(m)}_{i_m}) = $$
$$Tr_{W^{\ot r}}(L_{\sigma}L_{\tau_1}x^1_{i_{\tau^{-1}(1)}}\ot\cdots\ot x^m_{i_{\tau^{-1}(m)}}L_{\tau_2}) = $$
$$Tr_{W^{\ot r}}(L_{\tau_2}L_{\sigma}L_{\tau_1}x^1_{i_{\tau^{-1}(1)}}\ot\cdots\ot x^m_{i_{\tau^{-1}(m)}}) = $$
$$Tr_{W^{\ot r}}(L_{\tau_2\sigma\tau_1}x^1_{i_{\tau^{-1}(1)}}\ot\cdots\ot x^m_{i_{\tau^{-1}(m)}})=$$
$$t(\tau_2\sigma\tau_1,\tau\cdot\tb{i})(\tb{x}^1\ot\cdots\ot \tb{x}^m)$$
From which we conclude that 
$$\tau\cdot t(\sigma,\tb{i}) = t(\tau_2\sigma\tau_1,\tau\cdot\tb{i})$$ 
This translates to the relation
\begin{equation}\label{eq:Commutativity}p(\tau_2^{-1}\sigma\tau_1,\tb{i}) = p(\sigma,\tau(\tb{i})).\end{equation}
We conclude this discussion in the following lemma:
\begin{lemma}\label{lemma:palgebra}
The algebra $K[\A_H]^{\Ga}$ is generated by the elements $p(\sigma,\tb{i})$ subject to the relations \ref{eq:Alternation}, \ref{eq:Multiplication}, and \ref{eq:Commutativity}
\end{lemma}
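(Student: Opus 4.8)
The plan is to realize $K[\A_H]^{\Ga}$ as an explicit quotient of the algebra $T(\A_H^*)^{\Ga}$, whose presentation by the generators $t(\sigma,\tb{i})$ and the relations \ref{eq:Alternation}, \ref{eq:Multiplication} is already established above. The pivot of the argument is the graded identity $S[\A_H^*]_m^{\Ga}=(T(\A_H^*)_m^{\Ga})_{S_m}$ proved just before the statement, which lets me compute the kernel of $\pi_1$ one degree at a time. First I would dispose of generation: since $\Ga$ is reductive, the Reynolds (averaging) operator shows that $\pi_1\colon T(\A_H^*)^{\Ga}\to S[\A_H^*]^{\Ga}=K[\A_H]^{\Ga}$ is surjective, so the images $p(\sigma,\tb{i})=\pi_1(t(\sigma,\tb{i}))$ of the generators of the source generate the target.

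Next I would check that the three families of relations hold. The relations \ref{eq:Alternation} and \ref{eq:Multiplication} are the $\pi_1$-images of the corresponding relations holding in $T(\A_H^*)^{\Ga}$, hence are satisfied among the $p(\sigma,\tb{i})$; relation \ref{eq:Commutativity} is exactly the identity derived in the computation preceding the statement. Consequently there is a surjection from the abstract algebra presented by $p(\sigma,\tb{i})$ subject to \ref{eq:Alternation}, \ref{eq:Multiplication}, \ref{eq:Commutativity} onto $K[\A_H]^{\Ga}$, and what remains is to show it is injective.

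The substance is therefore completeness. Let $J\subseteq T(\A_H^*)^{\Ga}$ be the two-sided ideal generated by the homogeneous elements $t(\sigma,\tb{i})-t(\tau_2\sigma\tau_1,\tau\cdot\tb{i})$ underlying \ref{eq:Commutativity}; since these identities hold in $K[\A_H]^{\Ga}$ we have $J\subseteq K:=\ker(\pi_1|_{T(\A_H^*)^{\Ga}})$, so it suffices to prove $J=K$. Working in a fixed degree $m$, the identity $S[\A_H^*]_m^{\Ga}=(T(\A_H^*)_m^{\Ga})_{S_m}$ identifies $K_m$ with the kernel of the coinvariants projection, namely the span of $\{x-\tau\cdot x:x\in T(\A_H^*)_m^{\Ga},\ \tau\in S_m\}$. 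Because the generators $t(\sigma,\tb{i})$ with $\tb{i}$ of length $m$ already span $T(\A_H^*)_m^{\Ga}$ as a vector space (this is the Schur--Weyl computation of $((W^{p,p})^*)^{\Ga}$ together with the vanishing for $p\neq q$), and because $x\mapsto x-\tau\cdot x$ is linear in $x$, this span equals the span of the degree-$m$ generators of $J$. Hence $K_m\subseteq J_m$; combined with $J_m\subseteq K_m$ this gives $J_m=K_m$ for every $m$, so $J=K$ and the quotient presentation is the asserted one.

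I expect the main obstacle to be the bookkeeping in this final step: one must argue that passing from the vector-space span of the coinvariant relations $x-\tau\cdot x$ to the two-sided ideal $J$ introduces nothing new, which is exactly what the degree-wise argument resolves, since the generators span each graded piece and therefore the ideal contributes nothing in degree $m$ beyond the span already equal to $K_m$. A secondary point to verify carefully is the legitimacy of the exchange $((\,\cdot\,)_{S_m})^{\Ga}=((\,\cdot\,)^{\Ga})_{S_m}$ underlying the pivot identity; this holds because the $\Ga$- and $S_m$-actions on $(\A_H^*)^{\ot m}$ commute, so in characteristic zero the two exact averaging functors may be applied in either order.
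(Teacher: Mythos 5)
Your proposal is correct and follows essentially the same route as the paper: the paper also realizes $K[\A_H]^{\Ga}$ as the image of $T(\A_H^*)^{\Ga}$ under $\pi_1$ (surjective by reductivity), invokes the exchange identity $S[\A_H^*]_m^{\Ga}=(T(\A_H^*)_m^{\Ga})_{S_m}$ degree by degree, and derives relation \ref{eq:Commutativity} from the computed $S_m$-action $\tau\cdot t(\sigma,\tb{i})=t(\tau_2\sigma\tau_1,\tau\cdot\tb{i})$ on generators. Your final step, showing that the two-sided ideal generated by the commutativity elements exhausts the kernel of $\pi_1$ via the degree-wise span argument, is exactly the point the paper leaves implicit in its concluding discussion, and you verify it correctly.
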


\subsection{The algebras $K[Y_H]$ and $K[X_H] = K[Y_H]^{\Ga}$}
The algebra $K[Y_H]$ is isomorphic with $K[\A_H]/I$, where $I$ is the ideal of relations arising from the axioms of a cocycle deformation:
\begin{enumerate} 
\item $A_fA_g = A_{fg}$ for $f,g\in H^*$.
\item $m(1\ot m) = m(m\ot 1)$.
\item $A_fm = m(A_{f_1}\ot A_{f_2})$ for $f\in H^*$ and 
\item $m(1\ot A_f)T_h = f(h)Id_W$ for every $f\in H^*$ and $h\in H$.
\end{enumerate}
The reductivity of $\Ga$ implies that 
\begin{equation}\label{eq:quotient}
K[X_H]^{\Ga} = (K[Y_H]/I)^{\Ga} \cong K[Y_H]^{\Ga}/I^{\Ga}.
\end{equation}
All the above equations are equivalent to the vanishing of some polynomials in $K[\A_H]$, which in turn generate the ideal $I$. We will write down these polynomials and write a spanning set for $I^{\Ga}$ explicitly.

We start with the first axiom. 
We fix a basis $\{w_i\}$ for $W$, and we write the entries of $A_f$ by $a^f_{i,j}$ with respect to this basis. 
Thus \begin{equation}A_f(w_j)=\sum a^f_{i,j}w_i.\end{equation} Notice that $a^{f+\mu g}_{i,j} = a^f_{i,j} +  \mu a^g_{i,j}$ holds for every $f,g\in H^*$ and $\mu\in K$, and it is therefore enough to consider the elements $a^{f_k}_{i,j}$
where $\{f_k\}$ is the basis of $H^*$ described before.
The first axiom translates to the set of polynomial equations
\begin{equation}\label{eq:axiom1}\sum_k a^f_{i,k}a^g_{k,j} = a^{fg}_{i,j}\text{ for every } i,j=1,\ldots n.\end{equation}
Using the non-degeneracy of the trace form, we can write these equations as 
\begin{equation}Tr(LA_{fg})- Tr(LA_fA_g)=0\text{ for every }L\in \End_K(W).\end{equation}
The elements of the ideal generated by these polynomials will then be of the form 
\begin{equation}\label{eq:axiom2}Tr_{W^{\ot q}}(L(x_{i_1}\ot\cdots\ot x_{i_m}\ot A_fA_g))-Tr_{W^{\ot q}}(L(x_{i_1}\ot\cdots\ot x_{i_m}\ot A_{fg}))\end{equation}
for $L\in \Hom(W^p,W^q)$, where $p=\sum_j p_{i_j} + 1$ and $q=\sum_j q_{i_j} + 1$. 
It is clear how to describe the second part of \ref{eq:axiom2} (that is- the one with $A_{fg}$) in terms of the generators of $K[\A_H]$ we have described in the previous subsection. 
Notice that in case $fg$ is not itself a basis element of $H^*$ we will need to expand it as a linear combination of basis elements.
For the first part we proceed as follows: We notice that \begin{equation}A_fA_g = ev^{2,2}L_{(12)}(A_g\ot A_f),\end{equation} where $ev^{2,2}(w_1\ot w_2\ot f_1\ot f_2) = f_2(w_2)w_1\ot f_1 $ 
(that is: we evaluate the second tensor copies of $W$ and of $W^*$). This implies that 
$$Tr_{W^{\ot q+1}}(L(x_{i_1}\ot\cdots\ot x_{i_m}\ot A_fA_g)) = $$ 
\begin{equation}Tr_{W^{\ot q}}(((L\ot Id_W)L_{(q,q+1)})(x_{i_1}\ot\cdots\ot x_{i_m}\ot A_g\ot A_f))\end{equation} and the polynomial in Equation \ref{eq:axiom2} can thus be written as 
$$Tr(((L\ot Id_W)L_{(q,q+1)})(x_{i_1}\ot\cdots\ot x_{i_m}\ot A_g\ot A_f))-$$ \begin{equation}Tr(L(x_{i_1}\ot\cdots\ot x_{i_m}\ot A_{fg})).\end{equation}

We write the last polynomial as $p_{L,A_fA_g-A_{fg},\tb{i}}$. We write $I^{A_fA_g-A_{fg},\tb{i}}$ for the subspace of $I$ spanned by all the polynomials $p_{L,A_fA_g-A_{fg},\tb{i}}$. 

The rest of the axioms can also be written as the vanishing of some linear map, whose entries are polynomials in the structure constants (i.e. the entries of $A_f$, $m$ and $T$ with respect to some basis of $W$). 
For each such linear map $Q$ we define similarly the polynomials $p_{L,Q,\tb{i}}$ and the subspace $I^{Q,\tb{i}}$.
It follows that $I=\sum_{Q,\tb{i}}I^{Q,\tb{i}}$. We thus have a surjective map $\oplus_{Q,\tb{i}}I^{Q,\tb{i}}\to I$ which gives us a surjective map on the invariant subspaces since $\Ga$ is reductive:
$$\oplus_{Q,\tb{i}}(I^{Q,\tb{i}})^{\Ga}\to I^{\Ga}.$$ In order to describe the ideal $I^{\Ga}$ of $K[\A_H]^{\Ga}$ it will thus be enough to describe the subspaces $(I^{Q,\tb{i}})^{\Ga}$.
As can easily be seen, for every $Q$ the action of ${\Ga}$ on $I^{Q,\tb{i}}$ is given by $g\cdot p_{L,Q,\tb{i}} = p_{g(L),Q,\tb{i}}$, and so the invariants will again arise from invariants of the action of $\Ga$ on the spaces $W^{p,q}$,
which were already discussed before.
Using the description of the polynomials $p(\sigma,\tb{i})$ we get the following description of $I^{\Ga}$:
\begin{proposition}
For a multi-index $\emph{\tb{i}}=(i_1,\ldots, i_m)$ write $p=\sum_j p_{i_j}$ and $q=\sum_j q_{i_j}$. 
The ideal $I^{\Ga}$ is spanned by the following relations: \emph{
\begin{equation}\label{eq:Axioms}p(\sigma,\tb{i}\circ A_{fg}) -p(\sigma(p,p+1),\tb{i}\circ A_g\circ A_f)\text{ for } p=q, \sigma\in S_{p+1},\end{equation}
$$p((p+1,p+2)\sigma(p+1,p+2),\tb{i}\circ m\circ m) - $$ $$p((p,p+2)\sigma, \tb{i}\circ m\circ m) \text{ for } p=q+2, \sigma\in S_{p+1}$$
$$p(\sigma(p+1,p+2,p+3),\tb{i}\circ A_{f_1}\circ A_{f_2}\circ m) -$$ $$ p(\sigma(p+1,p+2),\tb{i}\circ m\circ A_f) \text{ for }p=q+1, \sigma\in S_{p+1},$$
$$p(\sigma(p+1,p+3,p+4),\tb{i}\circ T_h\circ A_f\circ m) -$$ $$ f(h)p(\sigma,\tb{i}\circ Id_W)\text{ for } p=q, \sigma\in S_{p+1}$$}

where \emph{$$p(\sigma,\tb{i}\circ Id_W) = 
\begin{cases}
dim(W) p(\sigma,\tb{i}) \text{ if } \sigma(p+1)=p+1 \\
p(\sigma (r,p+1),\tb{i}) \text{ if } \sigma(r)=p+1
\end{cases}
$$}
\end{proposition}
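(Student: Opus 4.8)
The plan is to build on the reduction already assembled in the paragraphs preceding the statement. By reductivity of $\Ga$ we have $K[X_H]\cong K[\A_H]^{\Ga}/I^{\Ga}$ (Equation \ref{eq:quotient}), and $I=\sum_{Q,\tb{i}}I^{Q,\tb{i}}$, where each of the four cocycle-deformation axioms is encoded as the vanishing of a linear map $Q$, and $I^{Q,\tb{i}}$ is spanned by the polynomials $p_{L,Q,\tb{i}}$ obtained by tracing an arbitrary $L$ against $x_{i_1}\ot\cdots\ot x_{i_m}\ot Q$. Since the $\Ga$-action satisfies $g\cdot p_{L,Q,\tb{i}}=p_{g(L),Q,\tb{i}}$ and $\Ga$ is reductive, the surjection $\oplus_{Q,\tb{i}}(I^{Q,\tb{i}})^{\Ga}\to I^{\Ga}$ reduces the problem to describing each $(I^{Q,\tb{i}})^{\Ga}$, which is the image of the space of $\Ga$-invariant operators $L$. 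First I would invoke Schur--Weyl duality (the earlier proposition computing $((W^{p,q})^*)^{\Ga}$) to conclude that these invariant $L$ are spanned by the permutation operators $L_\sigma$, so that $(I^{Q,\tb{i}})^{\Ga}$ is spanned by the traces $Tr(L_\sigma(x_{i_1}\ot\cdots\ot x_{i_m}\ot Q))$ with $\sigma$ ranging over the relevant symmetric group.

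The main step is then to rewrite each such trace in the normal form $p(\sigma',\tb{i}')$, in which every basic operator ($A_f$, $m$, $T_h$, or $Id_W$) occupies its own tensor block. The only genuine content is the wiring: a composite such as $A_fA_g$, $A_fm$, or $m(1\ot A_f)T_h$ is a composition of basic blocks, and realizing it as a single trace against those blocks placed side by side forces us to insert permutations that contract the output legs of one block with the input legs of the next and then reorder so that what is left is $L_{\sigma'}$ paired with the concatenated tensor. This is exactly the manipulation already illustrated for $A_fA_g=ev^{2,2}L_{(12)}(A_g\ot A_f)$, which produced the transposition $(p,p+1)$; applying the same bookkeeping to the remaining three axioms, and keeping track of the differing leg counts of $A_f\in W^{1,1}$, $m\in W^{1,2}$ and $T_h\in W^{2,1}$, produces respectively the cycles $(p,p+2)$, $(p+1,p+2,p+3)$ and $(p+1,p+3,p+4)$ displayed in the statement.

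For the fourth axiom the free $Id_W$ slot must be handled separately. Writing $Id_W$ as a tensor and pairing it with $L_\sigma$ either leaves its two legs in a cycle of their own---a closed loop contributing the factor $dim(W)$, which occurs precisely when $\sigma$ fixes the last point---or splices those two legs into an existing cycle, which amounts to composing $\sigma$ with the transposition $(r,p+1)$ where $\sigma(r)=p+1$. This is the source of the case split in the final display. Assembling the four families over all multi-indices $\tb{i}$ and all $\sigma$ then gives a spanning set of $I^{\Ga}$, as required.

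I expect the only real obstacle to be combinatorial rather than conceptual: one must fix once and for all a convention for indexing the input and output legs of the appended operators relative to the $p$ pre-existing legs, and then check that the wiring permutations produced under that convention are exactly the transpositions and $3$-cycles written in the statement. Once these conventions are pinned down the identities are forced, but matching the precise indices $(p,p+1)$, $(p,p+2)$, $(p+1,p+2,p+3)$ and $(p+1,p+3,p+4)$ to the displayed formulas is where all the care is needed.
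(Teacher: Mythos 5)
Your proposal is correct and follows essentially the same route as the paper: decompose $I=\sum_{Q,\tb{i}}I^{Q,\tb{i}}$, use reductivity of $\Ga$ to pass the surjection to invariants, apply the Schur--Weyl description of invariant operators so that only the $L_\sigma$ survive, and then convert each trace to the normal form $p(\sigma',\tb{i}')$ via the $ev^{2,2}L_{(12)}$-type rewiring, which is exactly how the paper produces the displayed permutations (and your loop/splice treatment of the $Id_W$ slot matches the paper's case split). The level of detail you defer on the index bookkeeping for the last three axioms is the same level of detail the paper itself leaves implicit.
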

Intuitively, we can understand the relations appearing in the proposition in the following way:
All the invariants $p(\sigma,\tb{i})$ can be understood as traces of maps formed from the maps $T,A,m$ by composition and by permuting the tensor factors. 
The relations appearing in the proposition say that if two linear maps are equal, then switching between them inside the relevant invariants will not change the invariants.
We conclude with the following description of $K[X_H]$.
\begin{theorem}\label{thm:main2}
 The algebra $K[X_H]$ is generated by the elements \emph{$p(\sigma,\tb{i})$}. The ideal of relations among these elements is generated by the polynomials which appear in Equations \ref{eq:Alternation}, \ref{eq:Multiplication} , \ref{eq:Commutativity}, and \ref{eq:Axioms}. 
\end{theorem}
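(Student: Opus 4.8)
The plan is to assemble Theorem \ref{thm:main2} from the three structural results already established, namely Lemma \ref{lemma:palgebra} for the coordinate ring $K[\A_H]^{\Ga}$ and the preceding Proposition describing the invariant ideal $I^{\Ga}$. First I would invoke the identification $K[Y_H]^{\Ga}\cong K[\A_H]^{\Ga}/I^{\Ga}$, which is exactly the content of Equation \ref{eq:quotient}; this is where the reductivity of $\Ga=GL(W)$ is essential, since it guarantees that taking $\Ga$-invariants is exact and commutes with passing to the quotient by $I$. Since $K[X_H]=K[Y_H]^{\Ga}$ by the discussion at the end of Section \ref{sec:Variety}, this already gives that $K[X_H]$ is the quotient of $K[\A_H]^{\Ga}$ by $I^{\Ga}$.

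Next I would feed in the two presentations. By Lemma \ref{lemma:palgebra}, the ambient algebra $K[\A_H]^{\Ga}$ is generated by the $p(\sigma,\tb{i})$ modulo the relations \ref{eq:Alternation}, \ref{eq:Multiplication}, and \ref{eq:Commutativity}; the first records the Schur--Weyl kernel of $\phi_r$ (the alternation relation when $r>n$), the second the multiplicativity of traces under concatenation, and the third the $S_m$-coinvariance coming from commutativity of the symmetric algebra. Since the $p(\sigma,\tb{i})$ generate $K[\A_H]^{\Ga}$ and the quotient map $K[\A_H]^{\Ga}\to K[X_H]$ is surjective, their images generate $K[X_H]$. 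This establishes the generation claim.

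For the relations, the point is that the defining ideal of $K[X_H]$ inside the free presentation is generated by the relations already cutting out $K[\A_H]^{\Ga}$ together with a set of generators for $I^{\Ga}$. The preceding Proposition provides exactly such a spanning set for $I^{\Ga}$, listed in Equations \ref{eq:Axioms}, arising one-to-one from the four cocycle-deformation axioms (associativity of $m$, associativity and compatibility of the $H^*$-action, and the inverse condition $m(1\ot A_f)T_h=f(h)Id_W$). Here the crucial earlier reduction is that $I=\sum_{Q,\tb{i}}I^{Q,\tb{i}}$ surjects $\Ga$-equivariantly onto $I$, so by reductivity $\oplus_{Q,\tb{i}}(I^{Q,\tb{i}})^{\Ga}\to I^{\Ga}$ is surjective, and each $(I^{Q,\tb{i}})^{\Ga}$ is spanned by the explicit trace expressions computed via Schur--Weyl on the spaces $W^{p,q}$. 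Combining the generators and relations of $K[\A_H]^{\Ga}$ with these generators of $I^{\Ga}$ yields the presentation asserted in the theorem.

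The main obstacle, and the step I would be most careful about, is verifying that the listed relations \ref{eq:Axioms} genuinely \emph{span} $I^{\Ga}$ rather than merely lie in it. This requires checking that the surjection $\oplus_{Q,\tb{i}}(I^{Q,\tb{i}})^{\Ga}\to I^{\Ga}$ indeed hits everything, which in turn rests on two facts: that reductivity makes the functor of $\Ga$-invariants exact (so surjectivity of $\oplus I^{Q,\tb{i}}\to I$ is preserved), and that each invariant subspace $(I^{Q,\tb{i}})^{\Ga}$ is correctly described through the second statement of the Proposition preceding Lemma \ref{lemma:palgebra}, namely that $\Ga$-invariants in $(W^{p,p})^*$ are spanned by the $\sigma$-trace functionals. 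The bookkeeping translating each axiom---written as the vanishing of a composite linear map---into the corresponding shifted permutation inside a trace (the insertions like $(p+1,p+2,p+3)$ and the $Id_W$ case-split) is routine but error-prone, and that is where I would spend most of the verification effort.
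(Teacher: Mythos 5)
Your proposal is correct and follows essentially the same route as the paper: the paper's proof of Theorem \ref{thm:main2} is precisely the combination of Equation \ref{eq:quotient} (reductivity giving $K[X_H]\cong K[\A_H]^{\Ga}/I^{\Ga}$), Lemma \ref{lemma:palgebra} for the presentation of $K[\A_H]^{\Ga}$, and the preceding Proposition's spanning set for $I^{\Ga}$. Your added care about the surjectivity of $\oplus_{Q,\tb{i}}(I^{Q,\tb{i}})^{\Ga}\to I^{\Ga}$ is exactly the ``discussion above'' the paper invokes, so nothing is missing.
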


\begin{proof}
This follows from Equation \ref{eq:quotient} and Lemma \ref{lemma:palgebra}. together with the discussion above about the set of generators for $I^{\Ga}$.
\end{proof}

\begin{remark}
Notice that it is possible that the ideal $I$ defined above might not be a radical ideal. This makes little difference for us, since in general if $I$ is an ideal in a commutative ring $R$ upon which a group $\Ga$ acts, 
then the $R^{\Ga}$ ideals $rad(I)^{\Ga}$ and $rad(I^{\Ga})$ can easily be shown to be equal.
\end{remark}

We next describe a simplified form for the invariants $p(\sigma,\tb{i})$. 
For this, we define, for $h(1),\ldots h(l)\in H$, 
$$T(h(1),\ldots, h(l)): W\to W^{\ot (l+1)}\text{ as the composition}$$ \begin{equation} (Id_W^{\ot l-1}\ot T_{h(l)})\cdots(Id_W\ot T_{h(2)})T_{h(1)}.\end{equation}
In a similar way we define \begin{equation}m^l:W^{\ot l+1}\to W, \quad m^l=m\cdot (m\ot Id_W)\cdots(m\ot Id_W^{\ot l-1}).\end{equation}
We consider some identities for the maps $T_h$, $m$, and $A_f$. Notice that the first one is one of the axioms for a cocycle deformation which we repeat here since we will use it directly in the next proposition. 
All the rest can be proven using the isomorphism $W\cong \allH$ for some 2-cocycle $\alpha$.
\begin{lemma}\label{lemma:Identities} The following identities hold:\\
1. $A_f m = m(A_{f_1}\ot A_{f_2})$ \\
2. $(1\ot A_f)T_h = T_{h_1f(h_2)}$ and \\
3. $(A_f\ot 1)T_h = T_{f_2(S(h_1))h_2}A_{f_1}$\\ 
4. $(T_h\ot 1)T_g= (1\ot T_{g_2})T_{hg_1}$.
\end{lemma}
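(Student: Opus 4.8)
The plan is to verify each of the four identities by realizing $W$ as $\alH$ for a fixed 2-cocycle $\alpha$ (which is legitimate by Proposition \ref{prop:FormofW}, since every cocycle deformation has this form) and then computing both sides on an arbitrary element using the explicit formula $T(x\ot y)=x\cdota\tiS(y_1)\ot y_2$ recorded in the remark after Lemma \ref{lemma:Schauenburg}, together with $A_f(x)=x_1f(x_2)$. Identity (1) is simply axiom (3) for a cocycle deformation restated in the current notation, so I would dispatch it immediately by pointing to that axiom rather than recomputing. For identity (2), I would evaluate $(1\ot A_f)T_h$ on $x\in W$: applying $T_h$ gives $x\cdota\tiS(h_1)\ot h_2$, and then $A_f$ on the right factor turns $h_2$ into $(h_2)_1 f((h_2)_2)=h_2 f(h_3)$, so the result is $x\cdota\tiS(h_1)\ot h_2 f(h_3)$; recognizing this as $T_{h_1 f(h_2)}(x)$ after a coassociativity reindexing finishes the claim.

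For identity (3), I would again start from $T_h(x)=x\cdota\tiS(h_1)\ot h_2$ and now apply $A_f$ to the \emph{left} tensor factor. The key subtlety is that $A_f$ acts via the coaction $\rho$ on $W=\alH$, and since $\rho$ is the comultiplication of $H$ one has $\rho(x\cdota\tiS(h_1))=(x\cdota\tiS(h_1))_1\ot(x\cdota\tiS(h_1))_2$, which does not split simply because $\cdota$ is the twisted product. The natural route is to use that $A_f$ is the $H^*$-action induced from the coaction, that $T$ is an $H$-comodule map (as noted in the remark after Lemma \ref{lemma:Schauenburg}), and the interaction of $\tiS$ with the coaction, namely $\rho(\tiS(h))$, which by Lemma \ref{lemma:TwistedAntipode} and the definition $\tiS(x)=S(x_1)\gamma^{-1}(x_2)$ produces the antipode $S(h_1)$ that accounts for the $f_2(S(h_1))$ factor on the right-hand side. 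I expect the bookkeeping of which copy of $H$ the functional $f$ lands on, and the appearance of $S$, to be the main obstacle; the cleanest presentation is probably to track the total coaction $\rho$ on $T_h(x)$ and match Sweedler indices, rather than to expand $\tiS$ fully.

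For identity (4), which is the associativity-type relation $(T_h\ot 1)T_g=(1\ot T_{g_2})T_{hg_1}$, I would exploit the characterization of $T$ as the two-sided inverse of $M$ (Lemma \ref{lemma:Minv}) rather than grinding through Sweedler notation. Since $\ti T\colon H\to W^{op}\ot W$, $\ti T(h)=T(1\ot h)$, is an algebra homomorphism, the coassociativity of $\rho$ together with the compatibility conditions of Lemma \ref{lemma:Minv} forces $T$ to satisfy exactly this cocycle-style relation; concretely, applying the explicit formula gives $T_g(x)=x\cdota\tiS(g_1)\ot g_2$ and then $T_h$ on the first factor yields $x\cdota\tiS(g_1)\cdota\tiS(h_1)\ot h_2\ot g_2$, while the right-hand side produces $x\cdota\tiS((hg_1)_1)\ot (hg_1)_2\ot\cdots$, and the multiplicativity $\tiS(y)\cdota\tiS(x)$ established in Lemma \ref{lemma:TwistedAntipode} together with coassociativity reconciles the two. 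Throughout I would state explicitly that all identities are verified on $\alH$ and hence hold on any $W\cong\alH$, since the maps $m,T,A_f$ are intrinsic and the identities are basis-free.
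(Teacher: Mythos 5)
Your proposal is correct and follows essentially the same route as the paper: identity (1) is dispatched by citing the comodule-algebra axiom, and identities (2)--(4) are verified by realizing $W=\allH$ and computing with the explicit formula $T_h(x)=x\cdota\tiS(h_1)\ot h_2$ together with Lemma \ref{lemma:TwistedAntipode}. The paper only writes out identity (2) and declares (3) and (4) ``similar,'' so your sketches of those two computations (tracking $\Delta(\tiS(h))=\tiS(h_2)\ot S(h_1)$ for (3), and using the multiplicativity of $\ti{T}$ plus $g_1\cdota\tiS(g_2)=\epsilon(g)$ for (4)) simply fill in the details the paper omits.
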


\begin{proof}
	The first identity is a reformulation of the fact that $W$ is a comodule algebra. In other words, that the multiplication map commutes with the action of $H^*$ given by $f\mapsto A_f$. For the rest of the identities, we use the formula for $T$ from the proof of Lemma \ref{lemma:Minv}. We have $$(1\ot A_f)T_h(x) = (1\ot A_f)(x\cdota \tiS(h_1)\ot h_2) = x\cdota\tiS(h_2)\ot h_2f(h_3) = T_{h_1f(h_2)}(x)$$ as desired. The proof of identities 3 and 4 are similar.
	\end{proof}
	 
We now claim the following:
\begin{proposition}
 Each invariant \emph{$p(\sigma,\tb{i})$} can be written as a sum of invariants of the form $Tr_W(m^lL_{\sigma'}T(h(1),\ldots,h(l))A_f)$. 
\end{proposition}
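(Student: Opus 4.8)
The plan is to show that every invariant $p(\sigma,\tb{i})$ can be rewritten in the stated normal form by reorganizing the tensor factors and the maps $m$, $T_h$, $A_f$ inside a single trace. Recall that $p(\sigma,\tb{i}) = Tr_{W^{\ot r}}(L_{\sigma}(x_{i_1}\ot\cdots\ot x_{i_m}))$, where each $x_{i_j}$ is one of the building blocks $m\in W^{1,2}$, $T_h\in W^{2,1}$, or $A_f\in W^{1,1}$. The first step is to observe that the permutation $L_{\sigma}$, together with the trace, allows us to read the whole expression as the trace of a single endomorphism of $W^{\ot s}$ (for the appropriate $s$) built by composing the blocks in some order determined by $\sigma$ and by inserting the necessary flip maps $L_{\tau}$ between them. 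Since $p=q=r$ is forced for any nonzero invariant, the total number of $W$-inputs equals the number of $W$-outputs, so the blocks can always be arranged into a genuine endomorphism whose trace is taken.

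The key idea is to count the blocks by type: suppose $A_f$ appears $a$ times, $m$ appears $b$ times, and $T_h$ appears $c$ times. Each $m$ contributes $+1$ to inputs minus outputs and each $T_h$ contributes $-1$, while each $A_f$ is balanced; the constraint $p=q$ forces $b=c$, so the numbers of multiplications and of $T_h$'s coincide, which is exactly what is needed to form the operators $m^l$ and $T(h(1),\ldots,h(l))$ with a common $l$. First I would use the graphical/``blackbox'' description from the lemma preceding Equation \ref{eq:Commutativity}: each block is a box with a prescribed number of input and output strands, and $L_{\sigma}$ prescribes how the strands are wired together, so the whole invariant is the closed diagram (trace) of this wiring. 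The goal is to show this diagram can always be redrawn as: first apply all the $T_h$'s to build up from $W$ to $W^{\ot l+1}$ via $T(h(1),\ldots,h(l))$, possibly permute by some $L_{\sigma'}$, then collapse back to $W$ via $m^l$, and finally apply $A_f$ before closing the trace.

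The mechanism for this rearrangement is the set of identities in Lemma \ref{lemma:Identities}, which let me push maps past one another. Using identity (1), $A_f m = m(A_{f_1}\ot A_{f_2})$, I can move every $A$-block through the multiplications, collecting them; using identities (2) and (3) I can move $A$-blocks past the $T$-blocks (possibly splitting $f$ via its coproduct and changing the $h$'s accordingly); and using identity (4), the associativity-type relation $(T_h\ot 1)T_g = (1\ot T_{g_2})T_{hg_1}$, I can reassociate the $T$-blocks into a single iterated $T(h(1),\ldots,h(l))$. Each application of these identities may split an $A_f$ or a $T_h$ into a sum over Sweedler components, which is why the statement allows $p(\sigma,\tb{i})$ to be written as a \emph{sum} of invariants of the claimed shape rather than a single one. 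After all $A$-blocks have been pushed to one end and merged into a single $A_f$, and all $T$-blocks have been reassociated into $T(h(1),\ldots,h(l))$ and all $m$-blocks into $m^l$, the residual wiring between the output of $T(h(1),\ldots,h(l))$ and the input of $m^l$ is exactly a permutation $L_{\sigma'}$, yielding the normal form.

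The main obstacle will be the bookkeeping of the permutations: when I move a block past another using Lemma \ref{lemma:Identities}, or when I reassociate the $T$'s, the ambient permutation $\sigma$ transforms in a controlled but intricate way, and I must verify that after all blocks of a given type are gathered the leftover wiring is genuinely a single permutation $\sigma'$ applied between $T(h(1),\ldots,h(l))$ and $m^l$, rather than an interleaving that cannot be separated. The cleanest way to handle this is to argue at the diagrammatic level, showing that any wiring diagram built from these boxes is isotopic (after applying the relations) to the canonical layered form, and only at the end translating back into the permutation $\sigma'$ and the trace over $W$ rather than $W^{\ot r}$; the identity $Tr_{W^{\ot r}} = Tr_W \circ (\text{partial closures})$ absorbing the extra tensor factors is what reduces the outer trace $Tr_{W^{\ot r}}$ to $Tr_W$ in the final expression.
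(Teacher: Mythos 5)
There is a genuine gap, and it is not the permutation bookkeeping you flag at the end: it is that your only rewriting tools, the identities of Lemma \ref{lemma:Identities}, cannot change the topology of the closed diagram, while the statement requires exactly such a change. Those identities reorganize compositions that already live on a common thread; applied inside a closed diagram they never merge distinct connected components, never eliminate internal loops, and consequently never reduce a trace over $W^{\ot r}$ to a trace over a single copy of $W$. The simplest failure is $\tb{i}=(A_f,A_g)$ with $\sigma=id$: then $p(\sigma,\tb{i})=Tr_{W^{\ot 2}}(A_f\ot A_g)=Tr_W(A_f)\,Tr_W(A_g)$ is a product of two traces, the diagram is two disjoint circles, and since it contains no $m$ and no $T_h$ none of your identities even applies; no isotopy or ``partial closure'' turns two circles into one, yet the proposition asserts this product is a \emph{sum} of single-trace invariants. (Your partial-closure identity only produces a scalar times a smaller closed diagram, i.e.\ again a product, which is not of the required form.)

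The missing idea --- and the actual heart of the paper's proof --- is to exploit the invertibility of the Galois map $M:W\ot W\to W\ot H$, $x\ot y\mapsto xy_1\ot y_2$ (Proposition \ref{prop:FormofW}), not merely the identities of Lemma \ref{lemma:Identities}. One conjugates the operator inside the trace by the iterated map $\ti{M}:W^{\ot p}\to W\ot H^{\ot(p-1)}$, using invariance of the trace under conjugation, and then expands $Tr_{W\ot H^{\ot(p-1)}}$ as a sum, over dual bases $\{h_j\}$ of $H$ and $\{f_j\}$ of $H^*$, of traces of operators $W\to W$. This is the step that threads every tensor factor onto a single $W$-strand: the auxiliary $H$-legs can be cut into scalars (since $\sum_j h_jf_j(-)=Id_H$), whereas $W$-legs cannot, and in particular disconnected components become connected. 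It produces the outer frame $m^{p-1}(A_{f'_1}\ot\cdots\ot A_{f'_p})(\cdots)T(h_{j_1},\ldots,h_{j_{p-1}})$ into which the original blocks $x_{i_1},\ldots,x_{i_m}$ are sandwiched; only then do the moves you describe --- pushing the $A$'s to the right with identities (1)--(3) and merging the $T$'s with identity (4), at the price of Sweedler sums --- apply and finish the argument. Note also that this corrects your count: the final $l$ is not the number of $m$'s ($=$ number of $T_h$'s) occurring in $\tb{i}$, but that number plus $p-1$, because the conjugation inserts $p-1$ new multiplications and $p-1$ new $T$'s.
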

\begin{proof}
We have $p(\sigma,\tb{i})= Tr_{W^{\ot p}}(L_{\sigma}x_{i_1}\ot\cdots\ot x_{i_m})$. We use the fact that $M:W\ot W\to W\ot H$ is invertible with inverse $T:W\ot H\to W\ot W$.
We conjugate the map which appears in the definition of $p(\sigma,\tb{i})$ with the map \begin{equation}\ti{M}:=(M\ot Id_H^{\ot p-2})\ot\cdots\ot (Id_W^{\ot p-3}\ot M\ot Id_H)(Id_W^{\ot p-2}\ot M)\end{equation} and get 
\begin{equation}p(\sigma,\tb{i}) = Tr_{W\ot (H^{\ot p-1})}(\ti{M} L_{\sigma} x_{i_1}\ot\cdots\ot x_{i_m} \ti{T})\end{equation}
where $\ti{T} = \ti{M}^{-1} = (Id_W^{\ot p-2}\ot T)\cdots (T\ot Id_H^{\ot p-2})$.
By taking the basis $\{h_i\}$ of $H$ and the dual basis $\{f_i\}$ of $H^*$, we can rewrite the last trace as the sum of $n^{p-1}$ traces of maps from $W$ to $W$.
Indeed, if $R:W\ot H^{\ot p-1}\to W\ot H^{\ot p-1}$ is any linear map then $$Tr_{W\ot H^{\ot p-1}}(R) = $$ 
\begin{equation} \sum_{j_1,\ldots j_{p-1}}Tr_W(Id_W\ot f_{j_1}\ot\cdots\ot f_{j_{p-1}})R(-\ot (h_{j_1}\ot\cdots\ot  h_{j_{p-1}})).\end{equation}
Using the first identity in the previous lemma and the definition of $M$ we see that the invariant $p(\sigma,\tb{i})$ is the sum of traces of maps of the form \begin{equation}m^{p-1}(A_{f'_1}\ot\cdots\ot A_{f'_p}) L_{\sigma}(x_{i_1}\ot\cdots\ot x_{i_m})T(h_{j_1},\ldots h_{j_{p-1}}).\end{equation}
It will thus be enough to prove the statement of the proposition for maps of the above form. We first use the fact that 
$A_{f'_1}\ot\cdots\ot A_{f'_p}L_{\sigma} = L_{\sigma}A_{f'_{\sigma(1)}}\ot\cdots\ot A_{f'_{\sigma(p)}}$ to re-write this map as 
\begin{equation}m^{p-1}L_{\sigma}(A_{f''_1}\ot\cdots\ot A_{f''_p})(x_{i_1}\ot\cdots\ot x_{i_m})T(h_{j_1},\ldots h_{j_{p-1}}).\end{equation}
The tensors $x_{i_j}$ are of the form $A_f$ or $T_h$ or $m$. 
We can then use Identities 1-3 from Lemma \ref{lemma:Identities} to ``push to the right'' all the appearances of $A_f$ (including those in $x_{i_j}$) 
to the beginning of the linear map, to get a map of the form 
\begin{equation}m^{p-1}L_{\sigma}x_{i'_1}\ot\cdots\ot x_{i'_{m'}}T(h_{j_1},\ldots, h_{j_{p-1}})A_f\end{equation}
where $x_{i'_j}$ is either $T_h$ or $m$.
This map can be written as \begin{equation}m^{p-1}L_{\sigma}S_1S_2 T(h_{j_1},\ldots h_{j_{p-1}})A_f\end{equation}
where $S_1$ is a tensor product of copies of $m$ and of $Id_W$, and $S_2$ is a tensor product of copies of $T_{h_i}$ and $Id_W$. 
We can write $L_{\sigma}S_1 = S_3L_{\sigma'}$ where $S_3$ is again a tensor product of copies of $m$ and of $Id_W$ and $\sigma'\in S_{l+1}$ is a permutation defined from $\sigma$.
We can also use Identity 4 from Lemma \ref{lemma:Identities} repeatedly to write the composition $S_2T(h_{j_1},\ldots, h_{j_{p-1}})$ as a sum of maps of the form $T(h'_1,\ldots h'_{l})$. 
Using the associativity of $m$, we arrive at a map of the form $m^{l}L_{\sigma'}T(h'_1,\ldots, h'_{l})A_f$, which is what we wanted to prove.
\end{proof}
We write $$c(l,\sigma,f,h(1),\ldots h(l)) = Tr_W(m^lL_{\sigma}T(h(1),\ldots,h(l))A_f).$$
The above proposition implies immediately the following theorem, which together with Theorem \ref{thm:main2} above finishes the proof of Theorem \ref{thm:main}.
\begin{theorem}
The scalar invariants $c(l,\sigma,f,h(1),\ldots h(l))$ form a complete set of invariants for $W$. We call these invariants the basic invariants of $W$.
\end{theorem}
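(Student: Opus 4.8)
The plan is to deduce the final theorem directly from the preceding proposition, which already does the substantive work. The proposition asserts that every generator $p(\sigma,\tb{i})$ of $K[X_H]$ can be rewritten as a sum of invariants of the form $Tr_W(m^lL_{\sigma'}T(h(1),\ldots,h(l))A_f) = c(l,\sigma',f,h(1),\ldots,h(l))$. Since Theorem \ref{thm:main2} tells us that the $p(\sigma,\tb{i})$ already generate the algebra $K[X_H]$, the only thing left to verify is that replacing this generating set by the (a priori larger, but by the proposition actually smaller or equal) collection of $c$-invariants still yields a \emph{generating} set, and that the $c$-invariants are genuinely invariant functions on $Y_H$.

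First I would record that each $c(l,\sigma,f,h(1),\ldots,h(l))$ is $\Ga$-invariant. This is immediate from the same Schur--Weyl reasoning used throughout Section \ref{sec:Invariants}: the expression is a trace of a composition of $\Ga$-equivariant maps built from $m$, $T$, and $A_f$ together with the permutation operators $L_{\sigma}$, and the trace of such a composition is unchanged under the diagonal conjugation action of $\Ga=GL(W)$. Hence each $c$ lies in $K[Y_H]^{\Ga}=K[X_H]$.

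Next I would invoke the proposition verbatim: it exhibits every $p(\sigma,\tb{i})$ as a $K$-linear combination of $c$-invariants. Combined with Theorem \ref{thm:main2}, which says the $p(\sigma,\tb{i})$ generate $K[X_H]$ as a $K$-algebra, this shows that the subalgebra of $K[X_H]$ generated by all $c(l,\sigma,f,h(1),\ldots,h(l))$ contains every algebra generator of $K[X_H]$, and therefore equals $K[X_H]$. Thus the $c$-invariants form a generating set for $K[X_H]$, which is precisely the assertion that they are a complete set of invariants for $W$: two cocycle deformations have the same values of all basic invariants if and only if they correspond to the same point of the affine variety $X_H=Y_H/\Ga$, i.e. if and only if they are isomorphic as $H$-comodule algebras.

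There is essentially no obstacle remaining at the level of the theorem itself, since the proposition is the engine and Theorem \ref{thm:main2} supplies the generation statement; the genuine content has already been discharged in the proof of the proposition, where the repeated application of the identities of Lemma \ref{lemma:Identities} (especially the ``pushing $A_f$ to the right'' step and the use of Identity~4 to collapse compositions $S_2 T(h_{j_1},\ldots,h_{j_{p-1}})$ into single $T(h'_1,\ldots,h'_l)$ terms) is what reduces the general $p(\sigma,\tb{i})$ to the normalized $c$-form. The one point I would be careful to state explicitly is that ``complete set of invariants'' here means a set of algebra generators of the coordinate ring $K[X_H]$, so that completeness is equivalent to the separation of the closed $\Ga$-orbits guaranteed by the GIT quotient in Theorem \ref{Thm:GIT}; this is what licenses the passage from ``generate the coordinate ring'' to ``distinguish non-isomorphic deformations.''
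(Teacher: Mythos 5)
Your proposal is correct and follows the paper's own route exactly: the paper likewise treats this theorem as an immediate consequence of the preceding proposition (rewriting each generator $p(\sigma,\tb{i})$ as a sum of $c$-invariants) together with Theorem \ref{thm:main2}, which establishes that the $p(\sigma,\tb{i})$ generate $K[X_H]$. Your added remarks on the $\Ga$-invariance of the $c$-invariants and on ``complete set'' meaning a generating set of the coordinate ring (hence separating closed orbits via the GIT quotient) only make explicit what the paper leaves implicit.
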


We can, in principal, translate all the relations which the invariants $p(\sigma,\tb{i})$ satisfy to relations for the basic invariants.
In both cases the set of invariants and the set of relations is quite big. We will nevertheless be able to calculate some invariants explicitly, using Proposition \ref{prop:Reduction}.

As a first application of the basic invariants we study the relation of the invariants with Galois theory. 
For the next proposition, assume that $k\subseteq K$ is a subfield of $K$ such that $K/k$ is a Galois extension with Galois group $G$. 
Assume also that $H$ is already defined over $k$. In other words, assume that there is a Hopf algebra $H_k$ over $k$ such that $H\cong H_k\ot_k K$. 
This is true for example in case $K=\wb{\Q}$, $H=KZ$ or $K[Z]$ for some finite group $Z$, and $k=\Q$. In this case we will call a basic invariant $c(l,\sigma,f,h(1),\ldots,h(l))$ \textit{$k$-rational} if $f\in H_k^*$ and $h(1),\ldots, h(l)\in H_k$
(we can think of $H_k$ and $H_k^*$ as subsets of $H$ and $H^*$ respectively). 
 We claim the following:
 \begin{proposition}\label{prop:Galois}
Let $W$ be a cocycle deformation of $H$, defined over $K$.
  Let $L=k(c(l,\sigma,f,h(1),\ldots h(l)))\subseteq K$ be the subfield of $K$ generated by the $k$-rational basic invariants of $W$.
  Then for $\gamma\in G$ it holds that $^{\gamma}W\cong W$ if and only if $\gamma$ fixes the subfield $L$ of $K$ pointwise.
 \end{proposition}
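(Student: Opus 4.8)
The plan is to combine the completeness of the basic invariants, established in the preceding theorem, with the observation that these invariants transform equivariantly under the Galois action; the whole argument is then a formal consequence of these two facts.

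First I would make precise what $^{\gamma}W$ means. Since $H$ is defined over $k$, I fix a $k$-basis $\{h_1,\ldots,h_n\}$ of $H_k$, which is simultaneously a $K$-basis of $H$, with dual basis $\{f_1,\ldots,f_n\}\subseteq H_k^*$; fixing also a $K$-basis of $W$, the point of $Y_H$ determined by $W$ is recorded by structure constants in $K$, namely the entries of $m$, of the operators $T_{h_j}$, and of the operators $A_{f_j}$. An element $\gamma\in G=\mathrm{Gal}(K/k)$ acts on $W$ by applying $\gamma$ to all of these structure constants while leaving $H$ (which is $k$-rational, hence $\gamma$-fixed) untouched, and the resulting data defines $^{\gamma}W$. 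Because the defining equations of $Y_H$ involve only the structure constants of $H$, which lie in $k$, the variety $Y_H$ is defined over $k$, so $^{\gamma}W$ is again a genuine cocycle deformation of $H$.

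The central step is the equivariance identity
\begin{equation*}
c(l,\sigma,f,h(1),\ldots,h(l))(^{\gamma}W) = \gamma\bigl(c(l,\sigma,f,h(1),\ldots,h(l))(W)\bigr)
\end{equation*}
for every $k$-rational invariant, where $c(\ldots)(W)$ denotes the value of the invariant at the point $W$ and we take $f\in H_k^*$, $h(i)\in H_k$. To prove it I would expand $c(l,\sigma,f,h(1),\ldots,h(l))=Tr_W(m^lL_{\sigma}T(h(1),\ldots,h(l))A_f)$ in the chosen bases. Each factor contributes entries that are $k$-linear combinations of the structure constants of $W$: the coordinates of the $h(i)$ and of $f$ in the $k$-bases lie in $k$, the operator $L_{\sigma}$ has entries in $\{0,1\}$, and the trace is a sum of products of such entries. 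Hence the invariant is a polynomial with coefficients in $k$, evaluated at the structure constants of $W$, so that applying $\gamma$ to the structure constants commutes with evaluating the polynomial; this is exactly the displayed identity, since $\gamma$ is a field homomorphism fixing $k$ pointwise. I expect the bookkeeping needed to confirm that all coefficients genuinely lie in $k$ to be the one point requiring care, although it becomes routine once the $k$-rational bases are fixed.

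Finally I would assemble the biconditional. By multilinearity of $c$ in $f,h(1),\ldots,h(l)$, every basic invariant is a $K$-linear combination, with coefficients independent of $W$, of the $k$-rational ones; hence the $k$-rational basic invariants already separate the orbits and form a complete set. Now $^{\gamma}W\cong W$ holds if and only if all basic invariants of the two deformations coincide, equivalently if and only if the $k$-rational ones coincide. By the equivariance identity this means $\gamma\bigl(c(\ldots)(W)\bigr)=c(\ldots)(W)$ for every $k$-rational invariant, i.e. $\gamma$ fixes each generator of $L=k(c(l,\sigma,f,h(1),\ldots,h(l)))$. Since $\gamma$ already fixes $k$, fixing these generators is equivalent to fixing $L$ pointwise, which is precisely the assertion.
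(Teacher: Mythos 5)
Your proposal is correct and follows essentially the same route as the paper's proof: realizing $^{\gamma}W$ by Galois-twisting the structure constants (using that $H$ is defined over $k$), establishing the equivariance identity $c_{^{\gamma}W}(l,\sigma,f,h(1),\ldots,h(l)) = \gamma\bigl(c_W(l,\sigma,f,h(1),\ldots,h(l))\bigr)$ by writing each invariant as a polynomial with coefficients in $k$ evaluated at the structure constants, and then invoking completeness of the basic invariants to get the biconditional. The only (harmless) difference is that you justify via multilinearity why the $k$-rational invariants determine all basic invariants, a point the paper asserts without proof.
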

\begin{proof}
Recall first that $^{\gamma}W$ is the vector space $W$ twisted by the action of $\gamma$. Since the Hopf algebra $H$ is already defined over $k=K^{G}$, $^{\gamma}W$ is again a cocycle deformation of $H$.
The structure constants for the multiplication and the coaction are given by applying $\ga$ to the structure constants of $W$. The fact that the Hopf algebra $H$ is already defined over $k$ is crucial here, since otherwise we would get that $^{\ga}W$ is a comodule algebra over $^{\ga}H$. For more on Galois twisting of algebraic structures, see Section 8 of \cite{Meir1}.
It is easy to show that since $f\in H_k^*$ and $h(i)\in H_k$ it holds that

$$c_{^{\gamma}W}(l,\sigma,f,h(1),\ldots h(l)) = Tr_{^{\gamma}W}(m^lL_{\sigma}T(h(1),\ldots,h(l))A_f).$$
Let us fix a basis $\{w_1,\ldots w_n\}$ for $W$. We can write all the linear maps $T(h(i))$, $A_f$ and $m$ with respect to that basis, using structure constants (this is the basic idea which enables us to consider $W$ as a point in an affine space in the first place).
For the sake of simplicity, let us write $x_1,\ldots, x_a$ for the set of structure constants which appear in the map $m^lL_{\sigma}T(h(1),\ldots,h(l))A_f$. Then the trace of this map will be a polynomial $p(x_1,\ldots, x_a)$ with rational coefficients (this follows easily by considering the fact that composition of linear maps is given by polynomial with rational coefficients in the structure constants).
The structure constants for $^{\gamma}W$ will then be $\gamma(x_1),\ldots, \gamma(x_a)$.
This implies that the trace for $m^lL_{\sigma}T(h(1),\ldots,h(l))A_f$ considred as a map $^{\gamma}W\to ^{\gamma}W$ will be 
$p(\gamma(x_1),\ldots, \gamma(x_a)) = \gamma(p(x_1,\ldots x_a))$, because $p$ is a polynomial with rational coefficients.
From this discussion, we get the equation
 $$\gamma(c_W(l,\sigma,f,h(1),\ldots h(l))) = c_{^{\gamma}W}(l,\sigma,f,h(1),\ldots h(l)).$$
But the cocycle deformations $W$ and $^{\gamma}W$ are isomorphic if and only if they have the same basic invariants. 
Since the basic invariants with $f\in H_k^*$ and $h(i)\in H_k$ determine all the other basic invariants, this implies the proposition.
\end{proof}
\end{section}

\begin{section}{Examples}\label{sec:Examples}
We will give here examples for the invariants one gets for group algebras, dual group algebras, and several non-semisimple Hopf algebras.
\subsection{Group algebras}
In this case the Hopf algebra is a group algebra $H=KG$ and the cocycle deformation algebra is of the form $W=K^{\alpha}G$, where $[\alpha]\in H^2(G,K^{\times})$ is a cohomology class in the usual group cohomology sense.
The algebra $W$ has a basis given by $\{U_g\}$ for $g\in G$, and the multiplication is given by $U_gU_h = \alpha(g,h)U_{gh}$.
The map $T$ is given by $T(U_g\ot h) = U_gU_h^{-1}\ot U_h$. The map $A_{e_g}:W\to W$ sends $U_h$ to $\delta_{g,h}U_h$. Consider the invariant 
\begin{equation}\small{c(l,\sigma,f,e_g,g(1),\ldots g(l)) = Tr_W(m^{l}L_{\sigma}T(g(1),g(2),\ldots g(l))A_{e_g}).}\end{equation}
The map $m^{l}L_{\sigma}T(g(1),g(2),\ldots g(l))A_{e_g}$ will send $U_h$ to zero for $h\neq g$, and $U_g$ to $U_{h_1}^{\ep_1}U_{h_2}^{\ep_2}\cdots U_{h_{2l+1}}^{\ep_{2l+1}}$, 
where $\ep_i=\pm 1$, the list $\{h_1,\ldots h_{2l+1}\}$ contains every $g_i$ once with a positive and once with a negative power, and $g$ once with a positive power.
If $g\neq h_1^{\ep_1}\cdots h_{2l+1}^{\ep_{2l+1}}$ this map is nilpotent, and will therefore have trace zero. In the other case, where 
$g= h_1^{\ep_1}\cdots h_{2l+1}^{\ep_{2l+1}}$,
this map will have $U_g$ as an eigenvector with nonzero eigenvalue, and its trace will be $U_{h_1}^{\ep_1}U_{h_2}^{\ep_2}\cdots U_{h_{2l+1}}^{\ep_{2l+1}}U_g^{-1}\in K$. 
We have an interpretation of this invariant in terms of the Universal Coefficients Theorem: Indeed, the Universal Coefficient Theorem gives us a homomorphism 
\begin{equation}\Phi:H^2(G,K^{\times})\cong \Hom(H_2(G,\Z),K^{\times}).\end{equation}
The second integral homology group, also known as the Schur Multiplier of $G$ can be described in the following way: if 
\begin{equation}1\to R\to F\to G\to 1\end{equation} is a free resolution of $G$, then $H_2(G,\Z) \cong ([F,F]\cap R)/[F,R]$.
If we consider the free group generated by the symbols $\{x_g\}$ and mapped to $G$ in the obvious way, and if $h_1,\ldots h_{2l+1}$ is a sequence as above, then $x_{h_1}^{\ep_1}\cdots x_{h_{2l+1}}^{\ep_{2l+1}}x_g^{-1}\in [F,F]\cap R$. 
It then holds that \begin{equation}\Phi([\alpha])(x_{h_1}^{\ep_1}\cdots x_{h_{2l+1}}^{\ep_{2l+1}}x_g^{-1})\in K\end{equation} is exactly the above invariant. Also, it is easy to see that every element $a\in [F,F]\cap R$, will give rise to a basic invariant 
(by abuse of notations, we think here of homomorphisms $H_2(G,\Z)\to K^{\times}$ as homomorphisms from $[F,F]\cap R$ which vanish on $[F,R]$).
We summarize this in the following proposition:
\begin{proposition}
In case $H=KG$ all basic invariants are either zero or the invariants of the cocycle arising from the isomorphism of the Universal Coefficients Theorem.
\end{proposition}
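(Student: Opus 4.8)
The plan is to show that each basic invariant $c(l,\sigma,f,h(1),\ldots,h(l))$, when evaluated on the group-algebra cocycle deformation $W=K^{\alpha}G$, either vanishes or equals a scalar which is exactly one of the scalars produced by the Universal Coefficients isomorphism $\Phi$. First I would reduce to the case $f=e_g$ and $h(i)=g(i)$, where $e_g$ is the dual basis element to the group element $g$ and the $g(i)$ are group elements: since $\{e_g\}$ and $\{g\}$ are bases of $H^*$ and $H$ respectively, and the invariant is multilinear in $f,h(1),\ldots,h(l)$, it suffices to compute on these basis elements. The explicit formulas $T(U_g\otimes h)=U_gU_h^{-1}\otimes U_h$ and $A_{e_g}(U_h)=\delta_{g,h}U_h$ are already recorded in the excerpt, so I would simply trace through the composition $m^{l}L_{\sigma}T(g(1),\ldots,g(l))A_{e_g}$ acting on the basis $\{U_h\}$.

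The key computational step is to observe that $A_{e_g}$ kills every $U_h$ with $h\neq g$, so the operator sends $U_h\mapsto 0$ for $h\neq g$, and sends $U_g$ to a scalar multiple of a single basis vector $U_{h_1}^{\epsilon_1}\cdots U_{h_{2l+1}}^{\epsilon_{2l+1}}$, where the $\epsilon_i=\pm 1$ and each $g(i)$ appears once positively and once negatively (coming from the $U_h$ and $U_h^{-1}$ produced by $T$), and $g$ appears once positively. Here I would argue: if the product of these group elements is not $g$ itself, the operator is nilpotent (it maps the one-dimensional image into a different coordinate line and then to zero upon reapplication), hence has trace zero; if the product equals $g$, then $U_g$ is an eigenvector and the trace is precisely the accumulated cocycle scalar $\alpha(\cdots)$, which lies in $K^{\times}$.

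Next I would connect this scalar to $\Phi$. Taking a free group $F$ on the symbols $\{x_g\}$ surjecting onto $G$ with kernel $R$, the condition that the group-product equals $g$ means exactly that $w:=x_{h_1}^{\epsilon_1}\cdots x_{h_{2l+1}}^{\epsilon_{2l+1}}x_g^{-1}$ lies in $[F,F]\cap R$, using the Hopf–Schur description $H_2(G,\Z)\cong([F,F]\cap R)/[F,R]$. I would then verify that the scalar computed above is exactly $\Phi([\alpha])$ evaluated on the image of $w$ in $H_2(G,\Z)$; this is the content of the classical identification of $\Phi$ via the transgression, where lifting a relation in $R$ and reading off the cocycle values along the word reproduces precisely the alternating product of $\alpha$-values accumulated by the trace. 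Conversely, I would note that every element of $[F,F]\cap R$ arises from some such word, so every invariant of the $\Phi$-image is realized as a basic invariant.

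The main obstacle I anticipate is the bookkeeping in the middle step: making the sign pattern $(\epsilon_i)$ and the ordering induced by $\sigma$ precise, and checking that the scalar emitted by the trace matches the value of the $\Phi$-homomorphism on the correct element of $[F,F]\cap R$ rather than merely being \emph{some} value of it. In particular one must confirm that the dependence on $[F,R]$ drops out — i.e. that two words differing by an element of $[F,R]$ yield the same trace — which amounts to the $2$-cocycle condition on $\alpha$ and its normalization $\alpha(1,x)=\alpha(x,1)=\epsilon(x)$. I expect this to follow cleanly once the word is written down, since the transgression construction of $\Phi$ is exactly designed so that $[F,R]$ acts trivially; the rest is a direct identification.
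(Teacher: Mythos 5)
Your proposal is correct and follows essentially the same route as the paper: evaluate the invariant on the canonical basis elements $f=e_g$, $h(i)=g(i)$, observe that the operator $m^{l}L_{\sigma}T(g(1),\ldots,g(l))A_{e_g}$ kills $U_h$ for $h\neq g$ and sends $U_g$ to a scalar times $U_{h_1}^{\epsilon_1}\cdots U_{h_{2l+1}}^{\epsilon_{2l+1}}$, so the trace is zero (nilpotency) unless the group-element product equals $g$, in which case it is the accumulated $\alpha$-scalar, which is then identified with $\Phi([\alpha])$ evaluated on $x_{h_1}^{\epsilon_1}\cdots x_{h_{2l+1}}^{\epsilon_{2l+1}}x_g^{-1}\in[F,F]\cap R$ via Hopf's formula. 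Your extra care about well-definedness modulo $[F,R]$ is handled in the paper only in the subsequent lemma (the isomorphism $\Xi$), so your proposal matches the paper's argument in both substance and level of detail.
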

This gives us a description of the basic invariants, but it does not give us a description of the relations between them. To get a concrete description, which will in fact reconstruct for us the isomorphism $\Phi$, 
we will use reduction of the acting group (Proposition \ref{prop:Reduction}).

For this, we enumerate the group elements $\{1=g_1,\ldots, g_n\}$ of $G$. 
We fix a basis $\{w_1,\ldots, w_n\}$ of $W$. We consider the variety $Y$ of all cocycle deformation structures on $W$. 
We write $\Ga= GL(W)$. Let \begin{equation}N=\{\ga\in \Ga| \ga(w_i)=\lambda_i w_i\text{ for some } \lambda_i\in K^{\times}\}\cong GL_1(K)^n.\end{equation} 
We write $Y'\subseteq Y$ for the subvariety \begin{equation}Y'=\{((A_g),m,(T_{g}))| \forall i \, A_{g_i} = e_{ii}\}.\end{equation} 
We claim that the subgroup $N$ stabilizes the subvariety $Y'$, and every orbit of $\Ga$ in $Y$ intersects $Y'$ in exactly one $N$-orbit. 
Indeed, since for every cocycle deformation we will have a direct sum decomposition $W=\oplus_{g\in G} W_g$ where $W_g=span\{U_g\}$ is one dimensional, it holds that every orbit of $\Ga$ in $Y$ intersects $Y'$, 
and if two points in $Y'$ are conjugate under the action of $\ga\in \Ga$, then $\ga$ fixes all the maps $A_{g_i}=e_{ii}$ and is therefore contained in $N$. 
The group $N$ is also reductive.
We thus see that all the conditions of Proposition \ref{prop:Reduction} hold, and we can therefore reduce the structure group from $\Ga$ to $N$. 

Notice that we previously had $n^3$ entries for the maps $A_g$, and $n^3$ more entries for the multiplication. Since we passed to $Y'$ the maps $A_g$ are given to us now explicitly. 
The conditions saying that the multiplication respects the group grading will boil down to the polynomial equations $m_{i,j}^k=0$ if $g_ig_j\neq g_k$. 
This means that we have de-facto only $n^2$ entries for the multiplication. These entries will be exactly the values of the cocycle $\alpha$. 
The variety we are left with is exactly the variety of 2-cocycles on $G$, and the acting group acts by multiplying by one-cochains.

We re-write this variety in the following way: $$Y'=\{(\alpha(g,h))_{g,h\in G}|\forall g,h,k\in G\; \alpha(g,h)\alpha(gh,k) = \alpha(h,k)\alpha(g,hk),$$ \begin{equation}\alpha(g,h)\neq 0\}\end{equation}
(in fact, this is only a variety which is isomorphic with $Y'$ as an $N$-variety, but this does not make a big difference for us).
The group $N=GL_1^n$ acts on this variety by the action \begin{equation}(\lambda_g)_{g\in G}\cdot (\alpha(g,h))_{g,h\in G} = (\alpha(g,h)\lambda_g\lambda_h\lambda_{gh}^{-1})_{g,h\in G}.\end{equation}
We thus need to describe the $GL_1^n$ invariants in $K[Y']$.
We first notice that $K[Y']$ is in fact the group algebra of the abelian group $A$ generated by the elements $\alpha(g,h)$ modulo the relations arising from associativity, that is:
\begin{equation}A= \langle \alpha(g,h)\rangle / \langle \alpha(g,h)\alpha(gh,k)\alpha^{-1}(h,k)\alpha^{-1}(g,hk)\rangle\end{equation}and $K[Y'] = KA$.

For every $a\in A$ the subspace $Ka\subseteq K[Y']$ is a one dimensional representations of $GL_1^n$.
Since the character group of $GL_1^n$ is $\Z^n$, the action induces a homomorphism of abelian groups $\psi:A\to \Z^n$. 
In other words, for every $a\in A$ and $(\lambda_g)\in N$ it holds that 
\begin{equation}(\lambda_g)\cdot a = \prod_{i=1}^n \lambda_{g_i}^{\psi_i(a)}a\end{equation}
where $\psi=(\psi_1,\ldots,\psi_n)$.

The invariant subspace $K[Y']^{GL_1^n}\cong KA^{GL_1^n}$ is then spanned by $ker(\psi)\subseteq A$. We claim the following:
\begin{lemma}
We have an isomorphism $\Xi:ker(\psi)\cong ([F,F]\cap R)/[F,R]$ where $F=\langle x_g|g\in G\rangle$ is a free group and $R$ is the kernel of the canonical projection $F\to G$. 
\end{lemma}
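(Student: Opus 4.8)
The plan is to identify the entire group $A$ with $R/[F,R]$ by a map that carries $\psi$ to the natural map onto the free abelianization, and only afterwards to pass to kernels. Recall that $F=\langle x_g\mid g\in G\rangle$ maps onto $G$ by $x_g\mapsto g$, that $R$ is the kernel, and that $R$ is the normal closure in $F$ of the elements $r(g,h):=x_g x_h x_{gh}^{-1}$ (imposing $x_gx_h=x_{gh}$ collapses $F$ onto $G$). First I would build a homomorphism $\ti{\Xi}\colon A\to R/[F,R]$ by $\alpha(g,h)\mapsto r(g,h)\bmod[F,R]$. To see that this respects the defining relations of $A$, observe that $1\to R/[F,R]\to F/[F,R]\to G\to 1$ is a \emph{central} extension (dividing out $[F,R]$ makes $R$ central), and that the $2$-cocycle attached to the set-theoretic section $g\mapsto x_g$ is exactly $(g,h)\mapsto r(g,h)$. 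Being the cocycle of an honest central extension, it satisfies the $2$-cocycle identity automatically, and this identity is precisely the relation defining $A$; hence $\ti{\Xi}$ is well defined.

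Next I would prove $\ti{\Xi}$ is an isomorphism. Surjectivity is immediate: since $R$ is the normal closure of the $r(g,h)$ and conjugation acts trivially on $R/[F,R]$, the classes of the $r(g,h)$ already generate $R/[F,R]$ as an abelian group. For injectivity I would exhibit an explicit inverse. Form the set $E=A\times G$ with the twisted product $(a,g)(b,h)=(ab\,\alpha(g,h),gh)$; associativity of this product is equivalent to the cocycle relations imposed in $A$, so $E$ is a group fitting into a central extension $1\to A\to E\to G\to 1$. Because $F$ is free, $x_g\mapsto(1,g)$ extends to a homomorphism $\theta\colon F\to E$, which sends $R$ into $A$ (the $G$-component of $\theta(r(g,h))$ is trivial) and, $A$ being central, kills $[F,R]$. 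Thus $\theta$ descends to $\Theta\colon R/[F,R]\to A$ with $\Theta(r(g,h))=\alpha(g,h)$, and $\Theta,\ti{\Xi}$ are mutually inverse.

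Finally I would match the two gradings. Let $\mu\colon R/[F,R]\to F/[F,F]=\Z^n$ be induced by $R\hookrightarrow F$; it sends $r(g,h)$ to $e_g+e_h-e_{gh}$, which is exactly the weight $\psi(\alpha(g,h))$, so $\mu\circ\ti{\Xi}=\psi$. Hence $\ti{\Xi}$ restricts to an isomorphism $\ker\psi\xrightarrow{\ \sim\ }\ker\mu$, and since a class in $R/[F,R]$ dies in $F^{ab}$ iff it lies in $[F,F]$, we get $\ker\mu=([F,F]\cap R)/[F,R]$, which is the asserted $\Xi$ (and equals $H_2(G,\Z)$ by Hopf's formula). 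I expect the injectivity step to be the main obstacle, together with the normalization bookkeeping: since $A$ carries only the cocycle relations, the classes $\alpha(g,e),\alpha(e,h)$ all coincide with a single element $u=\alpha(e,e)$, matching $r(e,e)=x_e$; one must check that $\theta$ and the inverse behave correctly with respect to this element. This causes no harm for the final statement because $\psi(u)=e_e\neq 0$, so $u$ lies outside $\ker\psi$ and does not disturb the identification of the kernels.
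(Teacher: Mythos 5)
Your proof is correct, and it takes a genuinely different route from the paper's, even though both ultimately implement the same correspondence $\alpha(g,h)\leftrightarrow r(g,h):=x_gx_hx_{gh}^{-1}$. The paper works only at the level of the kernels: it defines $\Xi$ on an element of $\ker(\psi)$ by choosing a word expression $a=\prod_i\alpha(g_i,h_i)^{\ep_i}$ and sending it to $\prod_i r(g_i,h_i)^{\ep_i}\,[F,R]$, and it defines the inverse by choosing an expression of $x\in[F,F]\cap R$ as a word in the $r(g,h)^{\pm1}$ (using that these generate $R$ as a subgroup); the delicate points are then the by-hand well-definedness checks — independence of the ordering of factors, independence of the chosen word decomposition, compatibility with the associativity relations of $A$ — which the paper only sketches, especially in the inverse direction. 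You instead prove the stronger global statement $A\cong R/[F,R]$ and only afterwards cut down to kernels via the grading compatibility $\mu\circ\ti{\Xi}=\psi$. This buys you well-definedness for free: the forward map respects the relations of $A$ because $r(g,h)$ is precisely the cocycle of the central extension $1\to R/[F,R]\to F/[F,R]\to G\to 1$, and the inverse requires no choice of word decomposition at all, being induced by the universal property of the free group $F$ mapping into your explicitly constructed extension $E$, whose kernel is central so that $[F,R]$ dies automatically. The cost is the auxiliary construction of $E$ and the normalization bookkeeping you flag; that check does go through: since the cocycle in $A$ is unnormalized, the identity of $E$ is $(u^{-1},e)$ with $u=\alpha(e,e)$, the embedding of $A$ is $a\mapsto(au^{-1},e)$ rather than $a\mapsto(a,e)$, and then indeed $\theta(r(g,h))=(\alpha(g,h)u^{-1},e)$, so $\Theta(r(g,h))=\alpha(g,h)$ exactly as you claim, and $\Theta,\ti{\Xi}$ are mutually inverse on generators, hence everywhere. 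As a bonus, your argument identifies all of $K[Y']=KA$ with the group algebra of $R/[F,R]$, not merely the part relevant to the invariants, and makes the link to Hopf's formula for $H_2(G,\Z)$ explicit.
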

\begin{proof}
We will construct explicitly $\Xi$ and its inverse $\Xi^{-1}$.
Assume first that $a=\prod_{i=1}^m \alpha(g_i,h_i)^{\ep_i}\in ker(\psi)$,
where as usual $\ep_i\in\{\pm 1\}$.
We consider the element $a'=\prod_{i=1}^m(x_{g_i}x_{h_i}x_{g_ih_i}^{-1})^{\ep_i}\in F$.
The element $a'$ is contained in $R$ since it is a product of elements in $R$.
The fact that $a\in ker(\psi)$ implies that the element $a'$ is contained in $[F,F]$. Indeed, the exponent sum of $x_h$ for $h\in G$ in $a'$ is equal to $b_h$, 
where $(\lambda_g)\cdot a = (\prod_g \lambda_g^{b_g})a'$. But since $a'\in ker(\psi)$ this implies that this sum is zero. It follows that the exponent sum is zero for every $h\in G$, and $a'$ is thus an element of $[F,F]\cap R$.

We would like to define now $\Xi(a) = a' + [R,F]$. 
For this, we need to check that this map is well defined. Two problems may arise here: the first one is the fact that we took the product in $a'$ in a specific order. 
The second one is that the elements $\alpha(g,h)$ do not freely generate the abelian group $A$, and they satisfy relations arising from the associativity in the algebra.

For the first problem, we just need to consider the commutation of two instances of $\alpha$ variables: 
namely elements of the form \begin{equation}\alpha(g_1,h_1)^{\ep_1}\alpha(g_2,h_2)^{\ep_2}\alpha(g_1,h_1)^{-\ep_1}\alpha(g_2,h_2)^{-\ep_2}.\end{equation} 
But this will be the same as the commutator of two elements in $R$, which is obviously in $[R,F]$.
For the second problem, notice that all the relations among the $\alpha$ variables are generated in degree zero, 
namely by elements of the form $a=\alpha(g,h)\alpha(gh,k)\alpha(g,hk)^{-1}\alpha(h,k)^{-1}$.
But a direct calculation shows that in this case $a'$ is an element in $[F,R]$, so $\Xi$ is well defined indeed.

In order to define $\Xi^{-1}$ we proceed in a similar way.
For every element $x\in [F,F]\cap R$ we write $x$ as the product $\prod_i r_{g_i,h_i}^{\ep_i}$ where $r_{g,h} = x_gx_hx_{gh}^{-1}$ (this is possible since the elements $r_{g,h}$ generate $R$ as a group).
We then define $x' = \prod_i\alpha(g_i,h_i)^{\ep_i}\in A$. 
We then use the fact that $x\in [F,F]$ to show that $x'\in ker(\psi)$, 
and we use the fact that $A$ is an abelian group with defining relations arising from the associativity of $\alpha$ in order to show that if $x\in [R,F]$ then $x'$ is trivial in $A$. 
This then enables us to define a group homomorphism $([F,F]\cap R)/ [F,R]\to ker(\psi)\subseteq A$ by $\Theta(x)=x'$ and show that $\Theta=\Xi^{-1}$. This finishes the proof of the lemma.
\end{proof}
Notice that with the proof of the above lemma we have re-established the special case of the Universal Coefficients Theorem:
indeed, we have seen that an equivalence class of a 2-cocycle is equivalent to a ring homomorphism $K[Y]^{GL_1^n}\to K$.
But by the above isomorphism, this is the same as an abelian group homomorphism $[F,F]\cap R/[F,R]\to K^{\times}$, which is the isomorphism given by the Universal Coefficients Theorem.
 
\subsection{Dual group algebras}
Consider the case where the Hopf algebra is a dual group algebra $H=K[G]$. 
We will describe the basic invariants we receive here.
In this case, cocycle deformations of $H$ will be associative algebras with a $G$-action, which are isomorphic to $KG$ as $G$-representations.
We use now the fact that cocycle deformations of $H$ are the same thing as Drinfeld twists on $H^*$. Following \cite{Movshev} and \cite{EG3} (see also \cite{GN} and Theorem 3.2 in \cite{Ostrik}) we have the following classification result:
\begin{proposition}\label{prop:dualgroup}
A cocycle deformation for $K[G]$ is given by a pair $(F,[\alpha])$  where $F$ is a subgroup of $G$ and $[\alpha]\in H^2(F,K^{\times})$ 
is a non-degenerate 2-cocycle (which means that $K^{\alpha}F$ is isomorphic with a full matrix algebra).
Two such pairs will give rise to equivalent cocycle deformations if and only if they differ by conjugation by an element of $G$.
The algebra which corresponds to $(F,[\alpha])$ is given explicitly by $\oplus_{i}e_{t_i}K^{\alpha}F$ where $\{t_i\}_i$ is a set of coset representatives of $F$ in $G$,
and the $e_{t_i}$ are pairwise orthogonal idempotents. 
The action of $G$ on this algebra is given explicitly in the following way: 
if $gt_i = t_jf$ then $$g\cdot (e_{t_i}U_{f'}) = e_{t_j}U_fU_{f'}U_f^{-1}.$$
\end{proposition}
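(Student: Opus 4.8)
The plan is to deduce the classification from the categorical dictionary between cocycle deformations and fiber functors, and then to verify the explicit description of the comodule algebra by a direct computation. First I would use the identification, recalled in the introduction, between cocycle deformations of $H$ and fiber functors on the category of $H$-comodules, together with the fact that for $H=K[G]$ one has $\mathrm{Comod}(H)=\mathrm{Rep}(G)$ (equivalently, that a cocycle on $K[G]$ is the same datum as a Drinfeld twist on $H^*=KG$). The classification of fiber functors on $\mathrm{Rep}(G)$ of \cite{Movshev}, \cite{EG3}, \cite{Ostrik} and \cite{GN} then supplies the bijection: such functors correspond, up to natural isomorphism, to pairs $(F,[\alpha])$ with $F\leq G$ and $[\alpha]\in H^2(F,K^{\times})$ non-degenerate, and two pairs give naturally isomorphic functors exactly when they are $G$-conjugate. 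Since two cocycles are equivalent iff the comodule algebras $\alH$ are isomorphic iff the associated fiber functors are naturally isomorphic, this already yields the first clause and the ``conjugation'' clause of the proposition.

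The remaining, more hands-on task is to match the abstract classification with the explicit algebra $W=\bigoplus_i e_{t_i}K^\alpha F$. I would realize $W$ as the induced $G$-algebra $\mathrm{Ind}_F^G(K^\alpha F)$, where $F$ acts on $K^\alpha F$ by the twisted conjugation $U_{f'}\mapsto U_fU_{f'}U_f^{-1}$, and then check in turn that: (i) the $G$-action written in the statement is well defined (independent of the choice of coset representatives $t_i$, up to isomorphism) and acts by algebra automorphisms, so that $W$ is an $H$-comodule algebra; and (ii) as a $G$-representation $W$ is the regular representation, so that $W\cong H$ as an $H$-comodule. For (ii) the key point is that the twisted-conjugation representation of $F$ on $K^\alpha F\cong\mathrm{Mat}_d(K)$ is $V\ot V^*$ for the unique irreducible $\alpha$-projective module $V$, and non-degeneracy of $\alpha$ is exactly the statement that the projective character of $V$ vanishes off the identity, i.e. that $V\ot V^*$ is the regular representation of $F$; inducing up gives the regular representation of $G$. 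This is precisely where the hypothesis that $K^\alpha F$ is a full matrix algebra enters.

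Finally I would confirm that $W$ is genuinely a cocycle deformation, i.e. a Hopf--Galois object, by checking that the map $M:W\ot W\to W\ot H$ of Proposition \ref{prop:FormofW} is invertible; note that once $M$ is invertible, $W\cong\alH$ and hence $W\cong H$ as an $H$-comodule is automatic, so (ii) above also serves as an independent and illuminating cross-check. The cleanest route to invertibility is Lemma \ref{lemma:Minv}: exhibit the algebra homomorphism $\ti{T}:H\to W^{op}\ot W$ and verify the two required compositions on multiplicative generators, using that each block $K^\alpha F$ is a separable (Azumaya) algebra. Conceptually this is the assertion that $W$ is the internal endomorphism algebra of the generator of the module category $\mathrm{Rep}(K^\alpha F)$ over $\mathrm{Rep}(G)$, which is automatically Azumaya over the base; tracing this identification back also pins down which pair $(F,[\alpha])$ produces a given $W$, so that the correspondence and its equivalence relation agree with those obtained in the first step.

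I expect the main obstacle to be the bookkeeping in steps (ii) and the invertibility of $M$: proving cleanly that the induced twisted-conjugation representation is the regular representation, and making the identification of the concrete $W$ with the abstract fiber functor precise enough that the two notions of equivalence, namely isomorphism of comodule algebras versus $G$-conjugacy of pairs, become literally the same. The categorical input from the cited papers does the heavy lifting for completeness and for the equivalence criterion, so the genuine work is the explicit, representation-theoretic verification that the displayed algebra and $G$-action have the stated properties.
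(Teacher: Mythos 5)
Your proposal takes essentially the same route as the paper: the paper gives no independent proof of this proposition, but obtains it exactly as you do, by invoking the classification of twists/fiber functors on $\mathrm{Rep}(G)$ from \cite{Movshev}, \cite{EG3}, \cite{GN} and \cite{Ostrik} via the identification of cocycles on $K[G]$ with Drinfeld twists on $KG$. Your additional verifications of the explicit model --- the induced-algebra description, the fact that non-degeneracy of $\alpha$ makes the twisted-conjugation representation regular, and the invertibility of $M$ --- are correct and mirror what the paper does immediately after the proposition, where the inverse $T$ of $M$ is computed explicitly using precisely that non-degeneracy.
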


We can, in principle, write a cocycle on $K[G]$ which arises from the cocycle on $F$. 
For this, one uses the fact that as an $F$-module under conjugation $K^{\alpha}F$ is isomorphic with $KF$ with the regular action. 

We will take a different approach here. 
We will write the linear maps $T_f$ explicitly, and we will use them to write down the invariants explicitly.
We will not describe explicitly the relations between these invariants, but we will show how they relate to the basic invariants of the cocycle $\alpha$, 
and we will say something about their arithmetic properties.

The map $M$ sends $e_{t_i}U_{f_1}\ot e_{t_j}U_{f_2}$ to $\sum_{f\in F}e_{t_i}U_{f_1}U_fU_{f_2}U_f^{-1}\ot e_{t_ift_j^{-1}}$.
Notice that the idempotent in the second tensor factor is an element of $K[G]$, and not of $W$.
We shall now use the fact that the cocycle $\alpha$ is non-degenerate in order to describe explicitly the inverse $T$ of $M$.
For this, we will use the following facts: since $K^{\alpha}F\cong M_n(K)$, this algebra has a trivial center. 
For every $1\neq f\in F$ the element $\sum_{f'\in H}U_{f'}U_fU_{f'}^{-1}$ is central and not a scalar multiple of the identity, and is therefore zero. 

For an element $f'\in F$ and two coset representatives $t_i$ and $t_j$ we consider now the element 
\begin{equation}X_{i,j,h_1,h_2}= \frac{1}{|F|}\sum_{f\in F}e_{t_i}U_{f_1}U_{f}\ot e_{t_j} U_{f_2}^{-1}U_f^{-1}U_{f_2}.\end{equation} Its image under $M$ is the element
$$\frac{1}{|F|}\sum_{f,f_3\in H} e_{t_i} U_{f_1}U_fU_{f_3}U_{f_2}^{-1}U_f^{-1}U_{f_2}U_{f_3}^{-1}\ot e_{t_if_3t_j^{-1}} = $$
\begin{equation}e_{t_i} U_{f_1}\ot e_{t_if_2t_j^{-1}}.\end{equation}
For the equality we have used the fact that the sum \begin{equation}\frac{1}{|F|}\sum_{f\in F}U_fU_{f_3}U_{f_2}^{-1}U_f^{-1}\end{equation} is zero, unless $f_3=f_2$, in which case it is 1. 
This already gives us the inverse of the map $M$: indeed, for $t_i\in T$ and $g\in G$ there is a unique index $j$ such that $f_2:=t_i^{-1}gt_j$ is in $H$ 
(this follows easily from the fact that $t_j^{-1}$ are the representatives of the right cosets of $F$ in $G$). We then have that \begin{equation}T(e_{t_i} U_{h_1}\ot e_g) = X_{i,j,f_1,t_i^{-1}gt_{j}}.\end{equation}
Writing this more explicitly, we have that 
\begin{equation}T_{e_g}(e_{t_i} U_{f_1}) = \frac{1}{|F|}\sum_{f\in F}e_{t_i}U_{f_1}U_{f}\ot e_{t_j} U_{f_2}^{-1}U_f^{-1}U_{f_2}\text{ where } f_2=t_i^{-1}gt_j.\end{equation}

We now turn to the calculation of the invariants. 
We will begin with the case $F=G$.
In this case our algebra $W$ is just $K^{\alpha}F$ and \begin{equation}T_{e_g}(U_f) = \frac{1}{|F|}\sum_{f'} U_{f}U_{f'}\ot U_g^{-1}U_{f'}^{-1}U_g.\end{equation}
The map $T(e_{g(1)},\ldots e_{g(l)})$ sends $U_f$ to 
\begin{equation}\frac{1}{|F|^{l}}\sum_{f'_1,\ldots,f'_{l}} U_fU_{f'_1}\cdots U_{f'_{l}}\ot U_{g(l)}^{-1}U_{f'_{l}}^{-1}U_{g(l)}\ot\cdots\ot U_{g(1)}^{-1}U_{f'_1}^{-1}U_{g(1)}. \end{equation}
We would like to show that the trace of the map $A_g m^{l}L_{\sigma}T(e_{g(1)},\ldots,e_{g(l)})$ is the sum of invariants of the cocycle $\alpha$.
In order to do so we first show that the projection $$E_f:K^{\alpha}F\to K^{\alpha}F$$ \begin{equation}\sum_{f'\in F} a_{f'}U_{f'}\mapsto a_{f}U_{f}\end{equation} can be expressed using the maps $T_{e_g}$. 
Indeed, consider the map $m\tau T_{e_f}$ where $\tau$ is the flip of vector spaces. This is the same as the action of $K[G]$ on $W$ described in Section \ref{sec:Prelim}. We have 
\begin{equation}m\tau T_{e_f}(U_{f''}) = \frac{1}{|F|}\sum_{f'}U_f^{-1}U_{f'}^{-1}U_fU_{f''}U_{f'}.\end{equation}
In case $ff''\neq 1$ this sum is zero. When $ff''=1$ then this sum is equal to
\begin{equation}\frac{1}{|F|}\sum_{f'}U_f^{-1}U_fU_{f''}U_{f'}^{-1}U_{f'} = U_{f''},\end{equation}
and we thus see that $m\tau T_{e_{f^{-1}}}=E_f$.
We write \begin{equation}A_gm^{l}L_{\sigma}T(e_{g(1)},\ldots e_{g(l)}) = \sum_{f_1,\ldots f_l} A_gm^{l}L_{\sigma}(E_{f_1}\ot \cdots\ot E_{f_l})T(e_{g(1)},\ldots e_{g(l)}).\end{equation}
The trace of the left hand side will thus be the sum of the traces of the maps on the right hand side. But following the calculation we have done for group algebras, 
for every $l$-tuple $f_1,\ldots f_l$ the trace of any of the maps on the right hand side is either zero or $\frac{1}{|F|^{l}}$ times one of the basic invariants of the cocycle $\alpha$. 
Moreover, it can easily be shown that all the invariants of the cocycle $\alpha$ will be received in this way. 
Lastly, notice that the fact that $E_f$ can be expressed using $m$ and $T$ implies that the summands on the  right hand side are also basic invariants. 
We summarize this discussion in the following proposition:
\begin{proposition}
In the case $F=G$ all the basic invariants are either zero or of the form $$|F|^{-r}\sum_i c_i$$ where all the $c_i$ are basic invariants of the cocycle $\alpha$.
\end{proposition}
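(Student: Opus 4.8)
The plan is to reduce the computation of each basic invariant of $W=K^{\alpha}F$, viewed as a cocycle deformation of $K[G]$ in the case $F=G$, to the trace computation already carried out for group algebras, using the explicit form of $T$ and the projections $E_f$ obtained above. A general basic invariant is a linear combination of the invariants $c(l,\sigma,g,e_{g(1)},\ldots,e_{g(l)})=Tr_W(A_g m^l L_{\sigma}T(e_{g(1)},\ldots,e_{g(l)}))$, since $\{e_{g(i)}\}$ is a basis of $H=K[G]$ and the group elements span $H^*=KG$; so it suffices to treat these. First I would use that the projections $E_f$ onto the $U_f$-components of $K^{\alpha}F$ satisfy $\sum_{f\in F}E_f=Id_W$, and insert a resolution of the identity $\sum_{f_i}E_{f_i}$ into each of the $l$ output tensor factors of $T(e_{g(1)},\ldots,e_{g(l)})$ carrying the conjugated terms $U_{g(i)}^{-1}U_{f'_i}^{-1}U_{g(i)}$. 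By linearity of the trace this rewrites the invariant as the finite sum
$$\sum_{f_1,\ldots,f_l}Tr_W\bigl(A_g m^l L_{\sigma}(Id_W\ot E_{f_1}\ot\cdots\ot E_{f_l})T(e_{g(1)},\ldots,e_{g(l)})\bigr).$$

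Next, for a fixed tuple $(f_1,\ldots,f_l)$ I would observe that $E_{f_i}$ applied to the factor $U_{g(i)}^{-1}U_{f'_i}^{-1}U_{g(i)}$ selects the unique internal index $f'_i$ for which this conjugate is proportional to $U_{f_i}$; this collapses the internal sums $\frac{1}{|F|^l}\sum_{f'_1,\ldots,f'_l}$ to a single surviving summand while retaining the global prefactor $|F|^{-l}$. The resulting operator sends each basis vector $U_h$ to a word $U_{h_1}^{\ep_1}\cdots U_{h_{2l+1}}^{\ep_{2l+1}}$ of exactly the shape analysed in the group-algebra subsection, the cocycle $\alpha$ entering only through the relations $U_aU_b=\alpha(a,b)U_{ab}$ in $K^{\alpha}F$.

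Finally I would invoke the group-algebra trace computation verbatim: such an operator is nilpotent, hence of trace zero, unless the group word it produces matches the element dictated by $A_g$, in which case the trace equals the scalar $U_{h_1}^{\ep_1}\cdots U_{h_{2l+1}}^{\ep_{2l+1}}U_g^{-1}\in K$, which is precisely a basic invariant of the cocycle $\alpha$ on the group algebra $KF$. Summing over $(f_1,\ldots,f_l)$ then yields the claimed form $|F|^{-r}\sum_i c_i$ with $r=l$. As a byproduct, since $E_f=m\tau T_{e_{f^{-1}}}$ is built only from $m$ and $T$, each summand is itself a basic invariant of $W$, and every basic invariant of $\alpha$ arises this way. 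I expect the only genuine difficulty to be combinatorial bookkeeping: checking that inserting each $E_{f_i}$ pins down $f'_i$ correctly, and that $L_{\sigma}$ together with the conjugations by $g$ and the $g(i)$ arrange the word so that it falls exactly under the group-algebra pattern; once this matching is verified, the heart of the argument is the already-established group-algebra trace computation.
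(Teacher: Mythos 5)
Your proof is correct and follows essentially the same route as the paper: the paper likewise expresses the projections as $E_f = m\tau T_{e_{f^{-1}}}$, inserts the resolution of identity $\sum_{f_1,\ldots,f_l} E_{f_1}\ot\cdots\ot E_{f_l}$ into the output factors of $T(e_{g(1)},\ldots,e_{g(l)})$, and reduces each summand to the group-algebra trace computation, yielding the prefactor $|F|^{-l}$. The combinatorial bookkeeping you flag (each $E_{f_i}$ pinning down the internal index $f'_i$) is exactly what the paper leaves implicit with ``following the calculation we have done for group algebras,'' and your two byproduct observations (each summand is itself a basic invariant of $W$, and every invariant of $\alpha$ arises this way) also appear verbatim in the paper's argument.
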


We next consider the case where $F$ is a proper subgroup of $G$. 
We write $W=\oplus W_i$ where $W_i = e_{t_i}K^{\alpha}H$. 
We would like to calculate the trace of linear maps of the form $A_gm^{l}L_{\sigma}T(e_{g(1)},\ldots e_{g(l)})$.
We consider the action of this linear map on $W_i$. First notice that if for some $k=1,\ldots l$ the index $j$ for which $t_jF = g_kt_iF$ is different from $i$, this map will be the zero map:
this follows directly from the fact that the multiplication $e_{t_i} U_{f_1}\cdot e_{t_j} U_{f_2}$ is zero if $i\neq j$.
We thus see that this trace will be zero unless $g_k\in t_iFt_i^{-1}$ for every $k$.
Notice also that if $g\notin t_iHt_i^{-1}$ the action of $A_g$ will send $W_i$ to $W_j$ for some $j\neq i$, and the trace will be zero again. 
Let now 
\begin{equation}d_F(g,\sigma,g(1),\ldots g(l)) = \begin{cases}  
Tr_{W_1}(A_gm^{l}L_{\sigma}T(e_{g(1)},\ldots, e_{g(l)}) \text{ if }g,g_1,\ldots g_{l}\in F \\ 
0 \qquad \text{ else} \end{cases}\end{equation}
These are the invariants which we encountered in the case $F=G$. The above discussion can be summarized in the following proposition:
\begin{proposition}
The basic invariants of $W$ are of the form $$c(l,\sigma,g,e_{g(1)},\ldots, e_{g(l)}) = \sum_i d_F(t_i^{-1}gt_i,\sigma,t_i^{-1}g(1)t_i,\ldots,t_i^{-1}g(l)t_i)$$ 
\end{proposition}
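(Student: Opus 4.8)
The plan is to compute the basic invariant $c(l,\sigma,g,e_{g(1)},\ldots,e_{g(l)})=Tr_W(\Phi)$, where $\Phi=A_gm^{l}L_{\sigma}T(e_{g(1)},\ldots,e_{g(l)})\in\End(W)$, by splitting it along the block decomposition $W=\oplus_i W_i$ with $W_i=e_{t_i}K^{\alpha}F$. The only general fact I need is that the trace of an endomorphism of a direct sum is the sum of the traces of its diagonal blocks: $Tr_W(\Phi)=\sum_i Tr_{W_i}(p_i\Phi\iota_i)$, where $\iota_i\colon W_i\hookrightarrow W$ and $p_i\colon W\to W_i$ are the canonical inclusion and projection. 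I will show that the $i$-th diagonal block equals, after transport by the algebra isomorphism $\phi_i\colon W_i\to K^{\alpha}F$, $e_{t_i}U_f\mapsto U_f$, the operator whose trace is $d_F(t_i^{-1}gt_i,\sigma,t_i^{-1}g(1)t_i,\ldots,t_i^{-1}g(l)t_i)$.

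First I would record how the three building blocks interact with the decomposition, using the explicit formulas established above. Since $e_{t_i}e_{t_j}=\delta_{ij}e_{t_i}$, the multiplication $m$ kills $W_i\ot W_j$ for $i\neq j$ and restricts to each $W_i$ as the algebra $K^{\alpha}F$, so $\phi_i$ is an algebra isomorphism intertwining $m$ on $W_i$ with $m$ on $K^{\alpha}F$, and hence $m^{l}$ with $m^{l}$. From the explicit formula for $T_{e_g}$, the map $T_{e_g}$ carries $W_i$ into the single summand $W_i\ot W_j$, where $j$ is the unique index with $t_i^{-1}gt_j\in F$; similarly, by the action formula of Proposition \ref{prop:dualgroup}, $A_g$ carries $W_i$ into $W_j$ with $j$ determined by $gt_i\in t_jF$.

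Next I would chain these observations through $T(e_{g(1)},\ldots,e_{g(l)})$. Applying $T_{e_{g(k)}}$ to the $k$-th factor successively sends $W_i$ into a single component $W_i\ot W_{j_1}\ot\cdots\ot W_{j_l}$, with indices determined inductively by $t_{j_{k-1}}^{-1}g(k)t_{j_k}\in F$ and $j_0=i$. After $L_{\sigma}$ and the total multiplication $m^{l}$ --- which annihilates any tensor whose factors lie in distinct blocks --- the output is nonzero only if $i=j_1=\cdots=j_l$; an immediate induction shows that this is equivalent to $t_i^{-1}g(k)t_i\in F$ for all $k$. Requiring further that $A_g$ return $W_i$ to itself forces $t_i^{-1}gt_i\in F$. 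These are exactly the conditions under which $d_F(t_i^{-1}gt_i,\sigma,t_i^{-1}g(1)t_i,\ldots)$ is declared nonzero, so the vanishing cases agree on both sides automatically.

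It then remains to identify the surviving blocks. Setting $j=i$ in the formulas and writing $g'=t_i^{-1}gt_i$ and $g'(k)=t_i^{-1}g(k)t_i$, the factor $f_2=t_i^{-1}gt_j$ in the $T_{e_g}$-formula becomes $g'$, and one reads off directly that $\phi_i$ sends the $W_i$-block of $T_{e_g}$ to $T_{e_{g'}}$ on $K^{\alpha}F$ and the $W_i$-block of $A_g$ (which is conjugation by $U_{g'}$) to $A_{g'}$. Since $L_{\sigma}$ commutes with $\phi_i^{\ot(l+1)}$, the whole block is conjugate by $\phi_i$ to $A_{g'}m^{l}L_{\sigma}T(e_{g'(1)},\ldots,e_{g'(l)})$ acting on $K^{\alpha}F=W_1$; taking traces gives $Tr_{W_i}(p_i\Phi\iota_i)=d_F(g',\sigma,g'(1),\ldots,g'(l))$, and summing over $i$ yields the claim. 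I expect the only delicate point to be the bookkeeping of the conjugating elements $t_i$: one must check that the conjugation by $U_{g'}$ appears consistently in the $T_{e_g}$-, $A_g$-, and $m$-formulas, so that $\phi_i$ is a genuine intertwiner of all three operators and not merely a linear isomorphism of the underlying spaces.
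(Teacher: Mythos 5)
Your proposal is correct and takes essentially the same approach as the paper: the paper also decomposes $W=\oplus_i W_i$, observes that the multiplication kills cross-block terms, that each $T_{e_{g(k)}}$ and $A_g$ move blocks according to the coset conditions $g_k\in t_iFt_i^{-1}$ and $g\in t_iFt_i^{-1}$, and identifies each surviving diagonal block trace with $d_F$ of the $t_i$-conjugated data. Your write-up simply makes explicit (via the isomorphisms $\phi_i$ and the induction on the block indices $j_k$) the identification that the paper leaves as a summarizing remark.
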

As a more specific example, consider the case where $F$ is a normal subgroup of $G$. In this case, if any of the elements $g,g(1),\ldots g(l)$ is not in $F$, then the invariant is automatically zero. 
If all these elements are in $F$, then $d_F(g,\sigma,g(1),\ldots,g(l))$ is a sum of basic invariants of the cocycle $\alpha$. 
In a similar fashion, for every $i$ the scalar $d_F(t_i^{-1}gt_i,\sigma,t_i^{-1}g(1)t_i,\ldots,t_i^{-1}g(l)t_i)$ will be the sum of the same basic invariants, but for the cocycle $t_i^*(\alpha)$.

\subsubsection{Dual group algebras- a concrete example.}  Assume that the ground field $K$ is $\overline{\Q}$. 
Consider the group $F=\Z/3\times \Z/3$. Let $\{x,y\}$ be a basis for $F$, considered as a vector space over $\Z/3$. Consider the following action of $\Z/4 = \langle g \rangle$ on $F$: $g(x^iy^j) = x^{2i}y^j$,
and construct the semidirect prouct $G = (\Z/3\times\Z/3)\rtimes \Z/4$. Notice that the subgroup $2\Z/4$ of $\Z/4$ is central.

Let $\alpha:F\times F\to K^{\times}$ be the 2-cocycle defined by $(x^iy^j,x^ky^l) = \zeta^{jk}$ where $\zeta\in K$ is a third root of unity.
This cocycle is non-degenerate, and a direct calculation shows that $g^*(\alpha)$ is cohomologous to $\nu^*(\alpha)$ where $\nu\in Gal(\Q(\zeta)/\Q)$ sends $\zeta$ to $\zeta^{-1}$.
Then we see that in this case all the basic nonzero invariants will be of the form $c(g,\sigma,g(1),\ldots, g(l))= (2+2\nu)(d_F(g,\sigma,g(1),\ldots g(l)))$ for some $g,g(1),\ldots g(l)\in F$, and will therefore be \textit{rational}.

We thus have the following situation: the Hopf algebra $K[G]$ is already defined over $\Q$, and all the basic invariants of $W$ are contained in $\Q$. As was mentioned in Proposition \ref{prop:Galois}, this only means that $^{\nu}W\cong W$. 
We show here that the cocycle deformation $W$ is, however, not definable over $\Q$.
For this, we begin by writing $W$ explicitly. As an algebra, 
\begin{equation}W = e_1K^{\alpha}F\oplus e_gK^{\nu(\alpha)}F\oplus e_{g^2}K^{\alpha}F\oplus e_{g^3}K^{\nu(\alpha)}F,\end{equation} the action of $g$ is given by
\begin{equation}e_1U_{f_1} + e_gU_{f_2} + e_{g^2}U_{f_3}+ e_{g^3}U_{f_4}\mapsto e_1 U_{g(f_4)} + e_g U_{g(f_1)} + e_{g^2}U_{g(f_2)} + e_{g^3}U_{g(f_3)}.\end{equation}
The action of $f\in F$ is given by conjugation with $e_1U_f + e_gU_f + e_{g^2}U_f + e_{g^3}U_f$.
Assume now that $W$ has a form $W_{\Q}$ defined over the rational numbers. For convenience, we will think of $W_{\Q}$ as a subspace of $W$.
The algebra $W_{\Q}$ is the direct sum of 1, 2 or 4 simple algebras over $\Q$.
Since the action of conjugation by $x$ and by $y$ is defined in $W_{\Q}$, and since this action fixes the centre pointwise, Skolem-Noether Theorem implies, by considering this action on the different direct summands of $W_{\Q}$, that
the action of $x$ is given by conjugation by an element $V_x\in W_{\Q}$, and the action of $y$ is given by conjugation by an element $V_y\in W_{\Q}$.
It must hold that $V_x = a_1e_1U_x + a_ge_gU_x + a_{g^2}e_{g^2}U_x + a_{g^3}e_{g^3}U_x$ and $V_y = b_1e_1U_y + b_ge_gU_y + b_{g^2}e_{g^2}U_y + b_{g^3}e_{g^3}U_y$
We then get that $z=V_xV_yV_x^{-1}V_y^{-1} = \zeta e_1 + \zeta^2 e_g + \zeta e_{g^2} + \zeta^2 e_{g^3}\in W_{\Q}$.
The last element is contained in the centre $Z$ of $W_{\Q}$. We see that $dim_{\Q}Z=4$, since the center of $W$ over $K$ is four dimensional, and the dimension of the centre is stable under field extension.
We also see that $Z$ contains a subalgebra of dimension 2 
which is isomorphic with the field $\Q(\zeta)$ (this is the subalgebra generated by the element $z$).
Moreover, there exists an automorphism $g$ of order 4 such that $Z^g=\Q$ (since taking dimensions of fixed subspaces is also stable under field extension). 
The element $g^2$ fixes $\Q(z)$ pointwise. Since we know that $Z^{g^2}$ has dimension 2 (again, since taking dimensions is stable under field extensions),
we see that $Z^{g^2}= \Q(z)$. The centre $Z$ is either a field or the direct sum of two fields. If it is a field, it will be a Galois extension of $\Q$ of order 4 which contains $\Q(\zeta)$ as a subfield.
However, one can prove directly that such an extension does not exist. 

It follows that $Z$ must be isomorphic with $\Q(\zeta)\oplus \Q(\zeta)$. 
The action of the element $g$ then necessarily sends $(1,0)$ to $(0,1)$, $(0,1)$ to $(1,0)$, $(\zeta,0)$ to either $(0,\zeta)$ or $(0,\zeta^2)$ and $(0,\zeta)$ to either $(\zeta,0)$ or $(\zeta^2,0)$. 
But by checking all 4 cases we see that in none of these options it holds that $Z^{g^2} \cong \Q(\zeta)$.
This is a contradiction, and therefore the form $W_{\Q}$ does not exist. This shows that unlike in the case of group algebras (see \cite{AHN}), the invariants of the 2-cocycle are not always enough in order to define the 2-cocycle.

\subsection{Taft Hopf algebras}
We turn now to examples arising from non-semisimple Hopf algebras.
We consider the Taft Hopf algebra $H_n$. This algebra has the following presentation:
\begin{equation} H_n = K\langle g,x\rangle/(g^n-1,x^n,gxg^{-1}-\zeta x)\end{equation}  where $\zeta\in K$ is a primitive $n$-th root of unity.
The comultiplication in this algebra is given on the generators by the rules $\Delta(g) = g\ot g$ and $\Delta(x) = x\ot 1 + g\ot x$. 
The classification of 2-cocycles for this Hopf algebras are known (see \cite{Masuoka} and the examples in \cite{Meir1}).
We will follow now some of the ideas of \cite{Meir1} and describe this problem as a problem in invariant theory.
To do so, we will begin by taking a cocycle deformation of $H_n$, and analyze its structure. We then use this analysis to reduce the acting group from $GL_{n^2}$ to $GL_1$, and study the invariants under this group.

To do so, we begin by recalling some of the calculations done in Lemma 13.1 of \cite{Meir1}. 
We begin by considering the element $\gamma\in H^*$ given by $\gamma(g^ix^j) = \delta_{j,0}\zeta^i$. 
As can easily be seen, this element is a group like element of $H^*$. 
We also define $\xi\in H^*$ by $\xi(g^ix^j) = \delta_{j,1}$. 
This element satisfies the equation $\Delta(\xi) = 1\ot \xi + \xi\ot\gamma^{-1}$.
Lastly, we define $\ti{T}_g = m\tau T_g:W\to W$. Under the isomorphism $W\cong \allH_n$ this map is given by conjugation by the element $g$ inside $\allH_n$. 

A direct calculation shows that the maps $T_g$ and $A_{\gamma}$ commute with one another. This follows from the fact that the corresponding elements commute inside the Drinfeld double $D(H_n)$ of $H_n$, 
and that $W$ is a representation of $D(H_n)$, as was explained in Section \ref{sec:Prelim}.

We thus see that $W$ is a representation of the abelian group $\Z/n\times \Z/n$. We let $W_{i,j}\subseteq W$ to be the subspace of $W$ upon which $\gamma$ acts by $\zeta^i$ and $T_g$ by $\zeta^j$.
We have a direct sum decomposition $W=\oplus_{i,j}W_{i,j}$. 
Using again the $D(H_n)$-representation structure on $W$, we see that $\xi(W_{i,j})\subseteq W_{i+1,j-1}$ where the indices are taken modulo $n$. 

Using now the presentation $W=\allH_n$, we see that the kernel of $\xi$ has a basis given by $g^i$ for $i=0,\ldots, n-1$. But $g^i\in W_{i,0}$. It follows that the restriction of $\xi$ to $W_{i,j}$ for $j\neq 0$ is injective.
By dimension considerations it follows now that $dim(W_{i,j}) = 1$ for every $i,j=0,\ldots n-1$. It then follows that if $j\neq 0$ then $\xi:W_{i,j}\to W_{i+1,j-1}$ is a linear isomorphism. 
Notice that the result about the dimensions of $W_{i,j}$ is independent of the particular cocycle.

The unit of $W$ must be an element of $W_{0,0}$. 
Let now $t\in W_{n-1,1}$ be an element which satisfies $\xi(t)=1$, and let $\ti{g}\in W_{1,0}$ be some nonzero element.
We claim that for every $i,j=0,\ldots, n-1$ the element $\ti{g}^it^j$ spans the one dimensional space $W_{i-j,j}$.
For this, we first notice that $\ti{g}$ is invertible. This follows from considering the restriction of the cocycle $\alpha$ to the sub-Hopf algebra of $H_n$ generated by the group like element $g$.
It thus remains to prove that $t^j\neq 0$ for $j<n$. We prove this by induction on $j$. We first calculate 
$A_{\xi}(t^j) = A_{\xi}(t\cdot t^{j-1}) = t\cdot A_{\xi}(t^{j-1}) + 1\cdot \zeta^{1-j}t^{j-1}$. It follows now by induction that $A_{\xi}(t^j) = (1+\zeta^{-1}+\cdots+\zeta^{1-j})t^{j-1}$,
so if we assume by induction that $t^{j-1}\neq 0$ it follows that $t^j\neq 0$ as well.

This already gives us a description of almost all the structure constants of $W$. 
Indeed, $W$ has a basis given by the elements $\ti{g}^it^j$. Using the associativity of $W$, and the fact that the action of $\ti{g}$ on $t$ by conjugation is given by $\ti{g}t\ti{g}^{-1} = \zeta t$ (because $t\in W_{n-1,1}$), 
we get that the multiplication of two basis elements is given explicitly by the formula 
\begin{equation} (\ti{g}^it^j)\cdot (\ti{g}^kt^l) = \begin{cases}
                            \zeta^{-jk}\ti{g}^{i+k}t^{j+l} \text{ if } i+k,j+l<n, \\
                            \zeta^{-jk}\ti{g}^n\ti{g}^{i+k-n}t^{j+l} \text{ if } i+k\geq n,j+l<n, \\
                            \zeta^{-jk}t^n\ti{g}^{i+k}t^{j+l-n} \text{ if } i+k<n,j+l\geq n, \\
                            \zeta^{-jk}\ti{g}^nt^n \ti{g}^{i+k-n}t^{j+l-n} \text{ if } i+k,j+l<n. \\
                           \end{cases} \end{equation} 
where we have used the fact that $\ti{g}^n,t^n\in W_{0,0} = span\{1\}$ are central elements. We will denote these elements by $\ti{g}^n=a$ and $t^n=b$.

Let $\{w_{i,j}\}_{i,j}$ be a basis for $W$. We now summarize the above discussion by using the Group-Reduction-Proposition (Proposition \ref{prop:Reduction}). 
Let \begin{equation} Y'=\{(m,T,A)| w_{i,j} = w_{1,0}^iw_{n-1,1}^j,A_{\xi}(w_{n-1,1})=1,\end{equation}  \begin{equation} \ti{T}_g(w_{i,j})=\zeta^jw_{i,j},A_{\gamma}(w_{i,j}) = \zeta^{i-j}w_{i,j}\}\subseteq Y.\end{equation} 
The above discussion shows that every orbit of $GL(W)$ in $Y$ intersects $Y'$. This is because we have shown that we can always find an orbit of $W$ for which the structure constants has a specific form. Since the action of the grop $GL(W)$ is esentially the base change operation, this implies that $Y'$ intersect all the orbits in $Y$.

Consider now the subgroup $N:=GL_1\subseteq GL(W)$ which acts on $W$ by the formula $\lambda\cdot w_{i,j} = \lambda^iw_{i,j}$.
The subgroup $N$ acts on $Y'$, and for every $GL(W)$-orbit $\Ow$ it holds that $\Ow\cap Y'$ is a single $N$-orbit.
This follows from the fact that the only liberty we have in the choice of the basis is by choosing $\lambda \ti{g}$ instead of $\ti{g}$.
The Group-Reduction-Lemma can thus be used here. All the structure constants can be expressed as polynomials in $a^{\pm 1},b$ as explained above: 
This is true for the structure constants of the multiplication by the above formula. 
For the structure constants of $A$, we have $A_{\gamma}(w_{i,j}) = \zeta^{i-j}w_{i,j}$ and $A_{\xi}(w_{i,j}) = (1+\zeta^{-1}+\ldots+\zeta^{1-j})h^it^{j-1}$,
and this is enough to describe $A$ because $\gamma$ and $\xi$ generate $H^*$.
We can thus also describe $T$ by using $a^{\pm 1}$ and $b$, since $T$ is the inverse of a linear map constructed from $A$ and $m$ 
(a priori this only tells us that we can describe the entries of $T$ by rational functions of $a$ and of $b$, but a direct verification shows that polynomials in $a^{\pm 1}$ and $b$ are enough).
By checking directly all the equations the structure $W$ should satisfy, we see that for every value of $(a,b)$ with $a\neq 0$ we will get an algebra which is a cocycle deformation for $H_n$.

We thus reach the conclusion that $K[Y']=K[a^{\pm 1},b]$. The action of $N$ on this ring is given by $\lambda\cdot a = \lambda^na$, $\lambda\cdot b = b$.
The ring of invariants is thus $K[b]$. 
We thus get here a concrete description of the moduli space of 2-cocycles on $H_n$ as the affine line. 

This result has been proven also in \cite{Meir1}. However, the proof there was different: we have also showed that over an algebraically closed field the equivalence class of a cocycle is given by a scalar $b\in K$.
What was less clear was the fact that for every $b$ we really get such an algebra. The proof there was by constructing such an algebra explicitly, using a crossed product construction.
This part of the proof appears also here, where we verified that for every value of $(a,b)$ with $a\neq 0$ we get a cocycle deformation of $H_n$ (by checking the equations for associativity and the other axioms).
In the next examples we will have to make similar considerations. We will use again the Group-Reduction Proposition in order to reduce the acting group to a relatively small group.
This will enable us to describe explicitly a generating set for the invariants. The problem is that it is not clear a-priori what will be the relations that these invariants should satisfy.
We will then use Lemma \ref{lemma:Minv} to show that for every set of invariants we get a cocycle deformation (and so, the generating sets we find will freely generate a polynomial algebra).

\subsection{Hopf algebras arising as Bosonizations of finite non-abelian groups}\label{subsec:S3}
We will consider now examples of cocycle deformations of Hopf algebras of the form $H=H_0\# R$ where $H_0$ is a semisimple Hopf algebra and $R\in\! _{H_0}^{H_0}\YD$ is the Nichols algebra $\B(V)$ of some object $V\in \!_{H_0}^{H_0}\YD$. 
We will consider here the case where $H_0=KG$ or $H_0=K[G]$ for a finite group $G$.
In both cases the category of Yetter Drinfeld modules is the same. An object in $_{H_0}^{H_0}\YD$ is a representation $V$ of $G$, which is also $G$ graded, in such a way that $g(V_h) = V_{ghg^{-1}}$ for every $g,h\in G$. 
We consider now the case where $G=S_3$ and $V=Ind_{\langle(12)\rangle}^{S_3}sign$, where $sign$ is the sign representation of $\langle(12)\rangle$. 
In the terminology of \cite{IM} this representation is denoted $(\Ow^3_2,-1)$. 
Written explicitly, $V=span\{a,b,c\}$ where 
$$a=1\ot 1$$
$$b= (123) \ot 1\text{ and}$$
$$c = (132)\ot 1.$$
The degrees of the elements of $V$ is given by 
$$\text{deg}(a) = (12)$$
$$\text{deg}(b) = (23)$$
$$\text{deg}(c) = (13)$$
The action of $(12)$ on the basis elements of $V$ is given by $$a\mapsto -a, b\mapsto -c, c\mapsto -b,$$ and the action of $(123)$ is given by $$a\mapsto b\mapsto c\mapsto a.$$
The structure of the Nichols algebra $\B(V)$ was studied in \cite{MS}. It is given explicitly as $\T(V)/I$, where $I$ is the two-sided ideal generated by the degree 2 elements $$a^2,b^2,c^2,ab+bc+ca\text{ and } ac+cb+ba.$$
This algebra is known as $FK_3$, the Fomin-Kirillov algebra. See also \cite{HecVen2}. The algebra $\B(V)$ is graded, and the dimensions of its homogeneous components are 1,3,4,3,1 (where $\dim \B(V)_0=1$ and so on).

We will study now cocycle deformations on the Hopf algebras $\B(V)\#KS_3$ and $\B(V)\#K[S_3]$. 
Notice that the first Hopf algebra is pointed (that is- all its irreducible comodules are one dimensional), while the second one is not, because the group $S_3$ is not abelian. Both Hopf algebras are of dimension 72.
See also \cite{AndVay1} and \cite{AndVay2} for a deeper study of these Hopf algebras.\\

\subsection{The Hopf algebra $H=\B(V)\#KS_3$.}
We have $H=KS_3\otimes \B(V)$ as vector spaces. The algebra structure is the bicrossed product algebra structure, where the group $S_3$ acts on $\B(V)$ by conjugation (see \cite{AndSch}). The elements of $S_3$ are group-like elements, and the elements of $V$ satisfy the coproduct formula
$$\Delta(a) = a\ot 1 + (12)\ot a$$
\begin{equation}\Delta(b) = b\ot 1 + (23)\ot b\end{equation}
$$
\Delta(c) = c\ot 1 + (13)\ot c
$$

 Analysing the equations for the variety $X_H$ will be too complicated.
Instead, we will use again the group reduction technique. For this we will begin by finding a specific form for any cocycle deformation of the Hopf algebra $H_1$.

Notice first that the Hopf algebra $H$ is also graded, where $$H_i = \T(V)_i\cdot KS_3.$$ For every $j$ it then holds that $$\oplus_{i\leq j} H_i$$ is a subcoalgebra of $H$.
Let now $W=\allH$ for some 2-cocycle $\alpha$. We begin by analysing $W$ similar to the way we proceeded in the previous examples. In the language of \cite{Meir1}, we study the fundamental category of $W$.

Whenever it will be convenient for us, we will use the isomorphism $W\cong H$ of right $H$-comodules.
We recall also the fact that $$T(1\ot h) = \tiS(h_1)\ot h_2\in W\ot W.$$
We start with analysing the comodule structure of $W$.
For every $g\in S_3$ we write $$W_g = \rho^{-1}(W\ot Kg).$$ This is a subspace of $W$ of dimension 1 (by using the isomorphism of comodules),
and the direct sum $$W_{S_3}=\oplus_{g\in S_3} W_g$$ will give us an algebra which is isomorphic to $K^{\beta}S_3$ for some $$[\beta]\in H^2(S_3,K^{\times}).$$ 

Next, we consider the subspace $$H_a=span\{1,a\}\subseteq H.$$ It holds that $\Delta^{-1}(H\ot H_a) = H_a.$ 
It then holds that $$W_a:=\rho^{-1}(W\ot H_a)\subseteq W$$ is a subspace of dimension 2.
Pick a nonzero element $w_{(12)}\in W_{(12)}$. We know that this element is invertible because the restriction of the cocycle $\alpha$ to $KS_3$ is also invertible.
Write $$c_{(12)}:W\to W$$ for conjugation by the element $w_{(12)}$. Notice that $c_{(12)}$ does not depend on the choice of $w_{(12)}$, and has order 2, since $w_{(12)}^2\in span\{1\}$.
Since $(12)a(12)=-a$ in $H$, it holds that $c_{(12)}(W_a)=W_a$. We write $\{1,w\}$ for a basis of $W_a$. It holds that $c_{(12)}(1)=1$. If $$\rho(w) = w_1\ot 1 + w_2\ot a,$$
then from the coassociativity and counitality of the coaction of $H$ we get that $w_1=w$ and $\rho(w_2) = w_2\ot (12)$.
This implies that $w_2$ is proportional to $w_{(12)}$, and therefore $c_{(12)}(w_2) = w_2$.
Using the fact that $\rho$ is an algebra morphism, we get $$\rho(c_{(12)}(w)) = c_{(12)}(w_1)\ot (12)(12)+ c_{(12)}(w_2)\ot (12)a(12) = $$ $$c_{(12)}(w_1)\ot 1 - w_2\ot a.$$
We thus have that \begin{equation}\rho((c_{(12)} + Id_W)(w)) = (c_{(12)} + Id_W)(w)\ot 1,\end{equation} and therefore $$(c_{(12)}+Id_W)(w)\in span\{1\}.$$
It follows that $$Im(c_{(12)}+Id_W))|_{W_a} = span\{1\}.$$ Since $dim W_a=2$ this means that there exists a unique vector (up to a nonzero scalar) $w_a\in W_a$ such that $c_{(12)}(w_a) = -w_a$.
This follows by considering the representation of the group $\Z/2$ (given by the conjugation action of $c_{(12)}$) on $W_a$ and its decomposition to isotypic components.

We next choose a non-zero element $w'_{(23)}\in W_{(23)}$. This element is again invertible since the cocycle $\alpha$ is invertible, and we consider the element $$w_{(123)} = w'^{-1}_{(23)}c_{(12)}(w'_{(23)})\in W_{(123)}.$$
Notice that this element is canonically defined, and does not depend on the choice of the elements $w_{(12)}$ and $w'_{(23)}$. 
Moreover, since $$H^2(S_3,K^{\times}) = 0$$ one can prove that $w_{(123)}^3=w_1$. We then define $$w_{(23)} = w_{(12)}w_{(123)} , w_{(13)} = w_{(12)}w_{(123)}^2,$$
\begin{equation}w_b = w_{(123)}w_aw_{(123)}^{-1}\text{ and } w_c = w_{(123)}^{-1}w_aw_{(123)}.\end{equation}
We can write $$\rho(w_a) = w_a\ot 1 + \nu w_{(12)}\ot a$$ for some scalar $\nu$. The element $w_a$ is not central, and $\nu$ is therefore not zero. 
We can then rescale $w_a$ to assume that $\nu=1$. Using the definition of $w_b$ and $w_c$ and the multiplicativity of $\rho$ we get that 
$$\rho(w_{b}) = w_{b}\ot 1 + w_{(23)}\ot b \text{ and }$$ $$\rho(w_{c}) = w_{c}\ot 1 + w_{(13)}\ot c.$$
We claim the following:
\begin{lemma}\label{lemma:productsbasis}
 The set of products $$\{1,w_a,w_b,w_c,w_aw_b,w_aw_c,w_bw_a,w_bw_c,$$ $$w_aw_bw_a,w_aw_bw_c,w_bw_aw_c,w_aw_bw_aw_c\}\cdot \{w_g|g\in S_3\}$$ is a basis for $W$
\end{lemma}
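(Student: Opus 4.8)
To prove this lemma I would argue by \emph{linear independence}, reducing everything to a single leading-term computation under the coaction $\rho$. Since $\dim W = \dim H = \dim\B(V)\cdot|S_3| = 12\cdot 6 = 72$, and the proposed set has exactly $12\cdot 6 = 72$ elements, it suffices to show these products are linearly independent. I use that the twelve words $\{1,a,b,c,ab,ac,ba,bc,aba,abc,bac,abac\}$ form the standard monomial basis of $\B(V)=FK_3$ (with homogeneous dimensions $1,3,4,3,1$), as in the description of $FK_3$ in \cite{MS}; the twelve products in the lemma arise from these monomials via $a\mapsto w_a$, $b\mapsto w_b$, $c\mapsto w_c$, and I denote them $u_1,\ldots,u_{12}$, writing $m_i\in\B(V)$ for the monomial $u_i$ corresponds to. The plan is to read off a leading term of each $u_iw_g$ using the coalgebra grading $H=\oplus_n H_n$ with $H_n=\B(V)_n\cdot KS_3$.

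For the leading terms, set $g_a=(12)$, $g_b=(23)$, $g_c=(13)$, so the established formulas read $\rho(w_x)=w_x\ot 1+w_{g_x}\ot x$ for $x\in\{a,b,c\}$, while $\rho(w_g)=w_g\ot g$ for $g\in S_3$. Since $\rho$ is an algebra map, for $u=w_{x_1}\cdots w_{x_k}$ I expand
$$\rho(u\,w_g)=\rho(w_{x_1})\cdots\rho(w_{x_k})\rho(w_g)$$
by choosing from each $\rho(w_{x_j})$ either the coalgebra-degree-$0$ summand $w_{x_j}\ot 1$ or the degree-$1$ summand $w_{g_{x_j}}\ot x_j$. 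The summand of highest coalgebra degree (degree $k$) is the unique one taking the degree-$1$ part everywhere, namely $w_{g_{x_1}}\cdots w_{g_{x_k}}w_g\ot (x_1\cdots x_k)\,g$. Here $w_{g_{x_1}}\cdots w_{g_{x_k}}w_g$ is a product of invertible elements inside the twisted group algebra $W_{S_3}\cong K^{\beta}S_3$, hence a nonzero scalar multiple of a single $w_{h_i g}$; and $x_1\cdots x_k=m_i$ is a nonzero basis element of $\B(V)$. Thus the leading term of $u_iw_g$ is $c_{i,g}\,w_{h_i g}\ot(m_i\,g)$ with $c_{i,g}\neq 0$, where $m_i\,g$ is the corresponding PBW basis element of $H$.

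For independence, suppose $\sum_{i,g}\lambda_{i,g}\,u_iw_g=0$ and let $n_0$ be the largest integer with $\lambda_{i,g}\neq 0$ for some $g$ and some $u_i$ with $|m_i|=n_0$. Applying $\rho$ and projecting onto the top piece $W\ot H_{n_0}$: products with $|m_i|<n_0$ contribute nothing (their $\rho$ has top degree $<n_0$), products with $|m_i|>n_0$ have vanishing coefficient by choice of $n_0$, and each length-$n_0$ product contributes exactly its leading term, giving
$$\sum_{i:\,|m_i|=n_0}\ \sum_{g\in S_3}\lambda_{i,g}\,c_{i,g}\,w_{h_i g}\ot(m_i\,g)=0.$$
As $m_i$ ranges over the degree-$n_0$ monomials and $g$ over $S_3$, the elements $m_i\,g$ are distinct basis elements of $H_{n_0}=\B(V)_{n_0}\cdot KS_3$; since each $w_{h_i g}\neq 0$ and each $c_{i,g}\neq 0$, comparing coefficients of the linearly independent vectors $(-)\ot(m_i g)$ forces $\lambda_{i,g}=0$ for all $|m_i|=n_0$, contradicting the choice of $n_0$. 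Hence all coefficients vanish and the $72$ products form a basis.

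The step I expect to need the most care is verifying that the top-degree term genuinely ``separates'' and is nonzero: that $w_{g_{x_1}}\cdots w_{g_{x_k}}w_g$ is a nonzero multiple of a single group-basis element (resting on $W_{S_3}\cong K^{\beta}S_3$ with invertible standard basis, already established) and that the twelve words really match the $FK_3$ basis of \cite{MS}, so that the $m_i$ are independent in $\B(V)$ and the $m_i g$ independent in $H$. Everything else is multiplicativity of $\rho$ together with bookkeeping in the coalgebra grading.
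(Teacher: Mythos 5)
Your proof is correct and follows essentially the same route as the paper's: a dimension count reduces the claim to linear independence, which is then established by applying the multiplicative map $\rho$, projecting onto the top nonvanishing piece $W\ot H_{n_0}$ of the coalgebra grading, and using that the leading terms are nonzero multiples of $w_{h_ig}\ot m_ig$ with the $m_ig$ linearly independent in $H$ and the $w_{h_ig}$ invertible. The only difference is presentational: the paper starts at degree $4$ and says ``continue in a similar fashion,'' whereas you organize the same downward induction uniformly via the maximal degree $n_0$ carrying a nonzero coefficient, which is a cleaner write-up of the identical argument.
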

\begin{proof}
We will use the multiplicativity of the map $\rho:W\to W\ot H$ together with the fact that the corresponding set of products in $H$ is a basis for $H$.
Since the dimension of $W$ agrees with the size of the set we have, it is enough to prove that this set is linearly independent.
Assume that we have a linear relation of the form \begin{equation}w_aw_bw_aw_c\cdot X + (\text{ terms of degree $\leq 3$} ) = 0\end{equation} where $X\in span\{w_g|g\in S_3\}$.
Apply the map $$W\stackrel{\rho}{\to}W\ot H\to W\ot H_4$$ where the second map is given by the projection $H\to H_4$.
If $X= \sum_{g\in S_3} t_gw_g$ then we get that $$\sum_{g\in S_3}t_gw_{(12)}w_{(23)}w_{(12)}w_{(13)}w_g\ot abacg=0.$$
But this implies that for every $g\in S_3$ we have $$t_gw_{(12)}w_{(23)}w_{(12)}w_{(13)}w_g = 0,$$ which implies that $X=0$ (since all the $w_g$ are invertible).
We then write the above linear combination as a linear combination of terms of degree $\leq 3$, and continue in a similar fashion.
\end{proof}

This gives us a very big reduction of the acting group. 
Indeed, by the last lemma we see that once we have chosen the element $w_{(12)}\in W_{(12)}$ we have a canonically defined basis for $W$. 
We write $$w_{(12)}^2 = c_0, w_a^2 = \lambda_0 \text{ and } w_aw_b + w_bw_c + w_cw_a=\mu_0.$$
Notice that due to the nature of defining $w_b$ and $w_c$ as conjugates of $w_a$, it holds that $$w_b^2 = w_c^2 = \lambda_0, \text{ and }
w_bw_a+ w_aw_c+w_cw_b = \mu_0.$$

We can now write specifically a subvariety $Y'\subseteq Y$ for the Group-Reduction-Lemma in the following way: Let $\{w_1,\ldots w_{72}\}$ be a basis for $W$, and let $\{q_1,\ldots q_{72}\}$ be an enumeration of the basis from the last lemma.
So for example $q_1=1$, $q_2=w_{(12)},\ldots q_7=w_a,q_8=w_aw_{(12)}$ and so on. 
We know that these basis elements satisfy a large collection of relations arising from the coaction of $H$ and the multiplication, for example $q_8=q_7q_2$, and $\rho(q_2) = q_2\ot (12)$.
Let us denote by $\{R_i(q_j)\}$ the set of all such relations. 
Consider now the subset $Y'= \{(m,T,A)| \forall i\, R_i(w_j)\}$. The last lemma implies that every orbit in $Y$ intersects $Y'$. 
This follows by choosing carefully the basis for $W$ and considering the fact that the action of $\Ga$ on $Y$ is given by base change.
By the above lemma we see that the only freedom in choosing the basis elements above is in choosing $w_{(12)}$. In other words, consider the subgroup $N=GL_1\subseteq GL(W)$.
The specific embedding of $GL_1$ in $GL(W)$ is given in the following way: an element $\nu\in N$ acts diagonally with respect to the above basis.
If $w_{i_1}\cdots w_{i_t}$ is a monomoial of degree $t$ in $w_a,w_b,w_c$ which appears in the basis, and $g\in S_3$ then the action of $\nu$ on $w_{i_1}\cdots w_{i_t}w_g$ is given by the scalar $\nu^{t+\frac{1-sign(g)}{2}}$.
We then see that each $GL(W)$-orbit in $Y$ intersects $Y'$ in exactly one $N$ orbit. The conditions of Proposition \ref{prop:Reduction} then hold.

We can now write explicitly all the structure constants using the scalars $c_0,\lambda_0$ and $\mu_0$.
The action of $\nu\in GL_1$ on these elements is given by $\nu\cdot (c_0,\lambda_0,\mu_0) = (\nu c_0,\nu\lambda_0,\nu\mu_0)$.
Since the scalar $c_0$ is necessarily invertible (since the element $w_{(12)}$ must be invertible) we reach the conclusion that the ring of invariants $K[Y']^N$ is generated here by the
elements $\lambda:=\lambda_0c_0^{-1}$ and $\mu:=\mu_0c_0^{-1}$.

We conclude this result in the following proposition:
\begin{proposition} 
We have an isomorphism $K[Y']^N\cong K[\lambda,\mu]/I$, where $I$ is some ideal.
\end{proposition}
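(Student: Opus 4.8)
The plan is to present $K[Y']$ as a quotient of a Laurent polynomial ring carrying a linear torus action, and then to read off the invariants by a weight computation. First I would invoke the structural analysis preceding the proposition: every structure constant of a point of $Y'$ — the entries of the multiplication $m$, of the maps $A_f$, and of $T$ — is a polynomial in $c_0,\lambda_0,\mu_0$ and $c_0^{-1}$. Since the restriction of $\alpha$ to $KS_3$ is invertible, $w_{(12)}$ is invertible, so the coordinate function $c_0=w_{(12)}^2$ is nowhere vanishing on $Y'$ and hence invertible in $K[Y']$. Therefore the functions $c_0,\lambda_0,\mu_0$ together with $c_0^{-1}$ generate $K[Y']$, so that the evaluation map
$$R:=K[c_0^{\pm 1},\lambda_0,\mu_0]\longrightarrow K[Y']$$
is surjective, giving an $N$-equivariant presentation $K[Y']\cong R/J$ for some $N$-stable ideal $J$.

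Next I would exploit that $N=GL_1$ is a torus, hence reductive, with character group $\Z$. The action $\nu\cdot(c_0,\lambda_0,\mu_0)=(\nu c_0,\nu\lambda_0,\nu\mu_0)$ assigns each of the three generators the common weight $1$, and $c_0^{-1}$ the weight $-1$, making $R$ and $K[Y']$ into $\Z$-graded algebras whose $N$-invariants are exactly the weight-zero components. A weight-zero Laurent monomial $c_0^{a}\lambda_0^{b}\mu_0^{c}$ has $a+b+c=0$ with $b,c\geq 0$, so $a=-(b+c)$ and the monomial equals $(\lambda_0 c_0^{-1})^{b}(\mu_0 c_0^{-1})^{c}$. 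Hence $R^N=K[\lambda,\mu]$, a polynomial ring in the two invariants $\lambda=\lambda_0 c_0^{-1}$ and $\mu=\mu_0 c_0^{-1}$.

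Finally, since $N$ is reductive the functor of $N$-invariants is exact on rational $N$-modules; applying it to $0\to J\to R\to K[Y']\to 0$ yields $K[Y']^N\cong R^N/J^N=K[\lambda,\mu]/I$ with $I:=J\cap R^N$, an ideal of $K[\lambda,\mu]$, which is the claimed isomorphism. (For a torus this last step needs no machinery: every module is the direct sum of its weight spaces, so passing to the weight-zero part is visibly exact.) The only genuine content lies in the first paragraph, namely the assertion that the entries of $T$ are polynomial, and not merely rational, in $c_0^{\pm 1},\lambda_0,\mu_0$: a priori $T$ is only the linear inverse of $M$, so its entries could involve further denominators, and one must check directly — exactly as in the Taft example — that no denominators beyond powers of $c_0$ appear, so that $Y'$ is genuinely cut out inside the Laurent ring $R$. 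Once this generation statement is secured, the remaining torus-invariant computation is routine.
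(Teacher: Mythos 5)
Your proof is correct and takes essentially the same route as the paper: the paper likewise deduces the proposition directly from the facts that all structure constants of points of $Y'$ are polynomials in $c_0^{\pm 1},\lambda_0,\mu_0$, that $N=GL_1$ scales $c_0,\lambda_0,\mu_0$ with a common weight, and that $c_0$ is invertible because $w_{(12)}$ is. Your added scaffolding (the $N$-equivariant presentation $K[Y']\cong R/J$ with $R=K[c_0^{\pm 1},\lambda_0,\mu_0]$, the weight-zero computation $R^N=K[\lambda,\mu]$, and exactness of invariants for a torus) simply makes rigorous what the paper states as an immediate conclusion, and the caveat you flag---that the entries of $T$ must be checked to be polynomial, not merely rational, in $c_0^{\pm 1},\lambda_0,\mu_0$---is precisely the direct verification the paper carries out explicitly in the Taft example and invokes implicitly here.
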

Therefore, to complete the classification of cocycle deformations of $H$, we need to understand the ideal $I$.
In other words- we need to understand what polynomial relations (if any) the elements $\lambda$ and $\mu$ must satisfy.
We will prove here the following (see also \cite{HecVen2} and \cite{IM}):
\begin{proposition}\label{prop:allS3}
The ideal $I$ above is the zero ideal. In other words, the moduli space $X_H$ of cocycle deformations for $H$ is the affine space $\Aa^2$.
\end{proposition}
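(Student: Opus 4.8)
The plan is to deduce $I=0$ from a \emph{realization} statement: for every pair $(\lambda,\mu)\in K^2$ I will construct a genuine cocycle deformation of $H$ whose two invariants are exactly $\lambda$ and $\mu$. This suffices, because $K[X_H]\cong K[\lambda,\mu]/I$ means that a nonzero $f\in I$ would be a nonzero polynomial vanishing, as a function on $X_H$, at the class of every cocycle deformation $W$; that is, $f(\lambda(W),\mu(W))=0$ for all $W$. If the assignment $W\mapsto(\lambda(W),\mu(W))$ is surjective onto $K^2$, then $f$ vanishes on all of $K^2$ and hence $f=0$, since $K$ is an infinite field. Using the $N=GL_1$-action, which scales $(c_0,\lambda_0,\mu_0)\mapsto(\nu c_0,\nu\lambda_0,\nu\mu_0)$, together with the fact that $c_0\neq 0$, I may normalise $c_0=1$, so that $\lambda=\lambda_0$ and $\mu=\mu_0$ and the problem becomes: for arbitrary $\lambda,\mu\in K$ build an algebra $W_{\lambda,\mu}$ with $w_{(12)}^2=1$, $w_a^2=\lambda$ and $w_aw_b+w_bw_c+w_cw_a=\mu$.

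I would then write $W_{\lambda,\mu}$ down explicitly: it is the algebra with basis the $72$ monomials of Lemma \ref{lemma:productsbasis}, whose multiplication is forced by the reduction of Proposition \ref{prop:Reduction}. The elements $w_g$ ($g\in S_3$) multiply according to the trivial class in $H^2(S_3,K^{\times})=0$ (so $w_{(12)}^2=1$, $w_{(123)}^3=1$, and conjugation by $w_{(12)}$ and $w_{(123)}$ permutes $\{w_a,w_b,w_c\}$ exactly as $S_3$ permutes $\{a,b,c\}$), subject to the deformed quadratic relations $w_a^2=w_b^2=w_c^2=\lambda$ and $w_aw_b+w_bw_c+w_cw_a=w_bw_a+w_aw_c+w_cw_b=\mu$. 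In other words, $W_{\lambda,\mu}$ is the inhomogeneous (PBW-type) deformation of $\B(V)\#KS_3=FK_3\#KS_3$ in which the defining relations $a^2=0$ and $ab+bc+ca=0$ of the Fomin--Kirillov algebra are deformed to $a^2=\lambda$ and $ab+bc+ca=\mu$.

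Verifying that $W_{\lambda,\mu}$ is a cocycle deformation of $H$ then proceeds in three steps. (i) \emph{Flatness}: show the $72$ monomials are linearly independent, so $\dim W_{\lambda,\mu}=72=\dim H$ and the associated graded algebra is $\B(V)\#KS_3$; this is a Diamond-Lemma/PBW computation resolving the overlap ambiguities coming from the quadratic relations of $FK_3$. (ii) \emph{Comodule algebra structure}: define $\rho\colon W_{\lambda,\mu}\to W_{\lambda,\mu}\ot H$ on generators by the same formulas as in $H$, for instance $\rho(w_g)=w_g\ot g$ and $\rho(w_a)=w_a\ot 1+w_{(12)}\ot a$, and check that $\rho$ respects all defining relations and is counital and coassociative, so that $W_{\lambda,\mu}$ is an $H$-comodule algebra isomorphic to $H$ as a comodule. (iii) \emph{Invertibility of $M$}: apply Lemma \ref{lemma:Minv}, defining the candidate $\ti{T}\colon H\to W_{\lambda,\mu}^{op}\ot W_{\lambda,\mu}$ on the algebra generators $(12),(123),a$ of $H$ by the formula $\ti{T}(h)=\tiS(h_1)\ot h_2$ from the remark following Lemma \ref{lemma:Minv}, extend it multiplicatively, and check on these generators that it is a well-defined algebra homomorphism and that the two compositions of Lemma \ref{lemma:Minv} equal $h\mapsto 1\ot h$ and $w\mapsto 1\ot w$; since it suffices to test multiplicative generators, this is a finite verification. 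By Proposition \ref{prop:FormofW} this makes $W_{\lambda,\mu}$ a cocycle deformation, and by construction its invariants are $\lambda_0/c_0=\lambda$ and $\mu_0/c_0=\mu$. Surjectivity of $(\lambda,\mu)$ follows, forcing $I=0$ and hence $X_H\cong\Aa^2$.

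I expect the main obstacle to be step (i): one must ensure that imposing the inhomogeneous relations $w_a^2=\lambda$ and $w_aw_b+w_bw_c+w_cw_a=\mu$ for \emph{arbitrary} scalars produces no unexpected collapse, and the non-abelian $S_3$-symmetry makes the overlap resolutions delicate. Equivalently, the weight of the argument is carried by the verification in step (iii) through Lemma \ref{lemma:Minv}, which at once certifies that $W_{\lambda,\mu}$ has the correct dimension and that it is a Hopf--Galois object; concretely checking the homomorphism property of $\ti{T}$ and the two unit conditions on the generators $(12),(123),a$ is the computational heart of the proof.
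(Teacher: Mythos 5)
Your strategy coincides with the paper's: realize every pair $(\lambda,\mu)$ by an explicit deformed algebra, certify that it is a cocycle deformation via Lemma \ref{lemma:Minv} and Proposition \ref{prop:FormofW}, and conclude $I=0$ from surjectivity of $(\lambda,\mu)$. But there is a genuine gap at the sentence claiming that step (iii) ``at once certifies that $W_{\lambda,\mu}$ has the correct dimension.'' It does not. Every condition in Lemma \ref{lemma:Minv} --- the homomorphism property of $\ti{T}$ and the two composition identities --- is an equation holding \emph{inside} $W_{\lambda,\mu}$ (resp.\ $W_{\lambda,\mu}\ot W_{\lambda,\mu}$), so all of these checks pass vacuously if the inhomogeneous relations collapse $W_{\lambda,\mu}$ to the zero algebra; and Proposition \ref{prop:FormofW} explicitly hypothesizes $W\neq 0$. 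This is not a hypothetical worry: deforming homogeneous relations by arbitrary scalars can force $1$ into the defining ideal, and the paper flags exactly this point (``it is possible that by deforming the original relations into non-homogeneous ones we would have gotten the element $1$ inside the ideal of relations''). So your route via (iii) alone is incomplete, and your route via (i) --- the Diamond Lemma/PBW computation, the only part of your argument that would rule out collapse --- is precisely the step you do not carry out; the two routes are not ``equivalent.''

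The missing idea, and the way the paper closes this hole cheaply, is to prove only that $W_{\lambda,\mu}\neq 0$: once that is known, Lemma \ref{lemma:Minv} together with Proposition \ref{prop:FormofW} upgrades ``nonzero'' to ``a cocycle deformation of $H$,'' hence automatically of dimension $72$, with no PBW computation at all. Concretely, the paper writes $W_{\lambda,\mu}=R\rtimes KS_3$, where $R$ is the deformed Fomin--Kirillov algebra on $w_a,w_b,w_c$, so that $W_{\lambda,\mu}\neq 0$ if and only if $R\neq 0$, and then exhibits explicit nonzero quotients of $R$: the map $R\to K$, $w_a,w_b,w_c\mapsto 0$, when $\lambda=\mu=0$; an explicit map $R\to \mathrm{M}_2(K)$ sending $w_a,w_b$ to a nilpotent matrix and $w_c$ to its ``transpose'' scaled by $\mu$ when $\lambda=0$, $\mu\neq 0$; and another explicit map $R\to \mathrm{M}_2(K)$ built from $\begin{pmatrix}1&0\\0&-1\end{pmatrix}$ and $\begin{pmatrix}0&1\\1&0\end{pmatrix}$ when $\lambda\neq 0$. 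Supplementing your step (iii) with such a non-vanishing argument would make your proposal complete; without it, the proof does not go through.
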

\begin{proof}
In principle, we can write all the structure constants for $W$ using $\lambda$ and $\mu$, and check that for every value of $(\lambda,\mu)$ we get a cocycle deformation.
Unfortunately, as the dimension of $W$ is quite big (72), this requires some effort.
What we will do instead is to construct, for every pair $(\lambda,\mu)$, such a cocycle deformation. We thus fix now $(\lambda,\mu)\in \Aa^2$.
We begin by defining an algebra $R$ by the presentation:
	$$R = K\langle w_a,w_b,w_c\rangle/(w_a^2 - \lambda, w_b^2-\lambda, w_c^2-\lambda,$$ $$ w_aw_b+w_bw_c+w_cw_a-\mu, w_aw_c+w_cw_b+w_bw_a-\mu).$$
This algebra has an action of the group $S_3$ given on the generators of $S_3$ and on the generators of $R$ by 
$$(12)(w_a) = -w_a, (12)(w_b) = -w_c, (12)w_c = -w_b$$
$$(123)(w_a) = w_b, (123)(w_b) = w_c, (123)w_c = w_a.$$
We define $W$ to be the crossed product algebra 
$$W:=R\rtimes KS_3.$$ We write $w_g$ for the group elements in $S_3$ inside $W$. Notice that $W=0$ if and only if $R=0$. At this level it is still not clear if this is the case or not. 

We define $\rho:W\to W\ot H$ as the algebra map which is defined, on generators, by $$\rho(w_g) = w_g\ot g, \rho(w_a) = w_a\ot 1 + w_{(12)}\ot a,$$
$$\rho(w_b) = w_b\ot 1 + w_{(23)}\ot b, \rho(w_c) = w_c\ot 1 + w_{(13)}\ot c.$$
A direct verification shows that $\rho$ sends the defining relations of $W$,  arising from the relations among the generators of $R$ and from the formulas for the action of $S_3$ on $R$, to zero, and thus defines an algebra morphism. Verification of coassociativity is also direct.
We next define $\ti{T}:H\to W^{op}\ot W$ to be the algebra map which, on the level of generators of $H$ is given by 
$$g\mapsto w_{g^{-1}}\ot w_g, a\mapsto -w_{(12)}w_a\ot 1 + w_{(12)}\ot w_a,$$ 	$$b\mapsto -w_{(23)}w_b\ot 1 + w_{(12)}\ot w_a, c\mapsto -w_{(13)}w_c\ot 1 + w_{(13)}\ot w_c.$$
A direct verification shows that the conditions of Lemma \ref{lemma:Minv} are satisfied, so if we can prove that the algebra $W$ is nonzero, we will be done.

Notice, again, that this is not clear a-priori. Indeed, it is possible that by deforming the original relations into non-homogeneous ones we would have gotten the element 1 inside the ideal of relations, 
and this would imply that $W=0$. However, Proposition \ref{prop:FormofW} tells us that proving that $W\neq 0$ will be enough. 
We shall do so by proving that $R\neq 0$.

In order to do so, it will be enough to show that $R$ has a nonzero quotient.
We proceed as follows: if $\mu=\lambda=0$ we know that this is the case, since we can simply map $R\to K$ by sending $w_a,w_b,w_c$ to $0$.
If $\lambda=0$ but $\mu\neq 0$, then we consider the algebra homomorphism $\phi: R\to \mathrm{M}_2(K)$ given by sending $w_a,w_b\to \begin{pmatrix}0 & 1 \\ 0 & 0\end{pmatrix}$ and $w_c\to \begin{pmatrix} 0 & 0 \\ \mu & 0\end{pmatrix}$.
A direct calculation shows that $\phi$ is an algebra map.

If $\lambda\neq 0$ we define $X = \begin{pmatrix} 1 & 0 \\ 0 & -1\end{pmatrix}\in \mathrm{M}_2(K)$ and
$Y=\begin{pmatrix}0 & 1 \\ 1 & 0\end{pmatrix}\in \mathrm{M}_2(K)$, and define $\phi:R\to \mathrm{M}_2(K)$ by 
$\phi(w_a) = \phi(w_b) = tX$ and $\phi(w_c) = rX+sY$ where $t^2=\lambda$, $r = (\mu-\lambda)/t$ and $s^2 = \lambda-r^2$
(the choice of the scalars is done to ensure that the relations of $R$ will hold in $\mathrm{M}_2(K)$).
Again, this proves that $R\neq 0$ and we are done.
\end{proof}

We denote the last cocycle deformation by $W=W_{\lambda,\mu}$. By applying Lemma \ref{lemma:Schauenburg} we get the following description of the double-twisted Hopf algebra $\alHal$:
\begin{proposition}
Write $W=\allH$ for an appropriate cocycle $\alpha$ on $H$. 
Then $\alHal$ is the Hopf algebra generated by the group like elements of $S_3$ together with the elements $a,b,c$, and which satisfy the relations 
$a^2=b^2=c^2=0$, $ab+bc+ca = \mu(1-(123))$, and $ba+ac+cb = \mu(1-(132))$. The coproduct is the same as in the Hopf algebra $H$,
and the action of the group like elements on $a,b$ and $c$ is the same as in $H$.
\end{proposition}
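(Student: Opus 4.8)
The plan is to invoke Lemma \ref{lemma:Schauenburg}, which identifies $\alHal$ with the algebra of $H$-coinvariants in $\allH\ot\allH$, equipped with the multiplication inherited from $\alH\ot(\allH)^{op}$, via the \emph{coalgebra} isomorphism $\Lambda:H\to\allH\ot\allH$, $\Lambda(x)=x_1\ot\tiS(x_2)$. Because $\Lambda$ is an isomorphism of coalgebras, the coalgebra structure of $\alHal$ is literally that of $H$, so the assertions about the coproduct and counit are immediate and only the multiplication on the algebra generators remains to be identified. For this I would work in the explicit model $W=W_{\lambda,\mu}=R\rtimes KS_3$ built in the proof of Proposition \ref{prop:allS3}, where $w_gw_h=w_{gh}$, $w_a^2=w_b^2=w_c^2=\lambda$, $w_aw_b+w_bw_c+w_cw_a=w_bw_a+w_aw_c+w_cw_b=\mu$, and the group elements act on $a,b,c$ by the stated conjugation rules (e.g. $w_{(12)}w_aw_{(12)}=-w_a$).

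First I would record $\Lambda$ on the generators. From the explicit $\ti T$ of that proof together with $\ti T(h)=\tiS(h_1)\ot h_2$ one reads off $\tiS(g)=w_{g^{-1}}$ and $\tiS(a)=-w_{(12)}w_a$ (cyclically for $b,c$), whence
$$\Lambda(g)=w_g\ot w_{g^{-1}},\qquad \Lambda(a)=w_a\ot 1-w_{(12)}\ot w_{(12)}w_a,$$
and similarly for $b,c$, while $\Lambda(1)=1\ot 1$ and $\Lambda((123))=w_{(123)}\ot w_{(132)}$. Since $\Lambda$ is injective (being an isomorphism onto the coinvariants) and an algebra map, a relation $r=0$ holds in $\alHal$ if and only if the corresponding identity among the $\Lambda$-images holds in the coinvariant algebra, where products are computed by $(u\ot v)(u'\ot v')=(u\cdota u')\ot(v'\cdota v)$ and then simplified using the relations of $W$.

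Carrying this out, for $a^2$ one expands $\Lambda(a)\Lambda(a)$ into four terms and uses $w_{(12)}^2=1$, $w_{(12)}w_aw_{(12)}=-w_a$ and $w_aw_{(12)}=-w_{(12)}w_a$ to see that they cancel in pairs, giving $\Lambda(a)^2=0$ and hence $a^2=0$; the relations $b^2=c^2=0$ then follow by symmetry once the group-like action is settled. For that action I would verify $\Lambda(g)\Lambda(h)=\Lambda(gh)$ (immediate, since $\tiS$ reverses products on group-likes) and, for instance, $\Lambda((123))\Lambda(a)=\Lambda(b)\Lambda((123))$ and $\Lambda((12))\Lambda(a)=-\Lambda(a)\Lambda((12))$, which reduce directly to the conjugation rules for $w_{(123)}$ and $w_{(12)}$ on $w_a,w_b,w_c$. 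The substantive step is the quadratic relation: I would compute $\Lambda(a)\Lambda(b)+\Lambda(b)\Lambda(c)+\Lambda(c)\Lambda(a)$ and show it equals $\mu\,(1\ot 1)-\mu\,w_{(123)}\ot w_{(132)}=\Lambda\!\left(\mu(1-(123))\right)$, and symmetrically for the opposite orientation. The ``leading'' terms $(w_a\cdota w_b)\ot 1$ sum to $\mu(1\ot 1)$ by the defining relation of $R$, and the point is to show that the remaining mixed terms assemble into $-\mu\,w_{(123)}\ot w_{(132)}$. This bookkeeping, requiring repeated use of the conjugation rules and of both $FK_3$-relations inside the opposite tensor factor, is where I expect the real effort to lie.

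Finally, to upgrade ``these relations hold in $\alHal$'' to ``$\alHal$ is exactly the presented Hopf algebra'', I would argue by dimension. The presented algebra $H'$ surjects onto $\alHal$, since its relations are satisfied and the generators span $\alHal$ (the twisted product differs from that of $H$ by lower-order terms, so $\alHal$ is still generated by $S_3$ and $a,b,c$). Filtering $H'$ by degree in $a,b,c$, the associated graded is a quotient of $FK_3\# KS_3=\B(V)\#KS_3$, because the deformed relations have the homogeneous $FK_3$-relations as leading terms; hence $\dim H'\le 72=\dim H=\dim\alHal$. A surjection between spaces of equal finite dimension is an isomorphism, which completes the identification.
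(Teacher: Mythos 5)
Your proposal is correct and follows essentially the same route as the paper: the paper's entire proof of this proposition consists of invoking Lemma \ref{lemma:Schauenburg} for the explicit model $W_{\lambda,\mu}$ constructed in Proposition \ref{prop:allS3}, and your argument simply supplies the computations the paper leaves implicit (reading off $\tiS$ on generators from $\ti{T}$, the pairwise cancellations giving $\Lambda(a)^2=0$, the assembly of the mixed terms into $\mu(1\ot 1)-\mu\,w_{(123)}\ot w_{(132)}$, and the filtration/dimension argument identifying the presented algebra with $\alHal$). These computations do check out, so nothing further is needed.
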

\begin{remark}
The construction of this algebra was done in \cite{IM} by Garcia Iglesias and Mombelli. The calculation done there also includes checking that a certain algebra is non-zero. 
They do so by using computer software. Checking that the algebra $R$ has a nonzero quotient, as we did here, provides a shorter proof.
\end{remark}
\begin{remark}
Since we have used Proposition \ref{prop:Reduction}, it is possible that $K[Y]^{GL(W)}$ will be a proper subalgebra of $K[Y']^N$. One can prove directly that this is not the case by constructing basic invariants which are equal to $\la$ and to $\mu$. A similar statement holds for the Taft Hopf algebras. 
\end{remark}

\subsection{Non-pointed non-semisimple Hopf algebra over $S_3$}\label{subsec:dualS3}
We turn now to the Hopf algebra $H=\B(V)\# K[S_3]$. This algebra was studied by Andruskiewitsch and Vay in \cite{AndVay1} and \cite{AndVay2}.
They have also classified Hopf algebras arising from this Hopf algebra by deformation, and showed that these algebras are all of the form $\alHal$.
Here we will consider the classification of all cocycle deformations of this algebra.
The braided vector space $V\in\! ^{S_3}_{S_3}\YD$ is the same as before. This algebra is generated by the dual group algebra $K[S_3]$ together with three elements $a,b,c$ which satisfy the same relations as before: 
$a^2=b^2=c^2=ab+bc+ca = ac+cb+ba = 0$. The action of $K[S_3]$ on $\B(V)$, which amounts to the grading by $S_3$ gives us here that $$e_ga = ae_{(12)g}, e_gb = be_{(23)g}, \text{ and } e_gc = ce_{(13)g} \text{ for every }g\in S_3.$$
The comultiplication is more complicated, as it is induced now by the action of $S_3$. It is given by the formulas:
$$\Delta(a) = a\ot 1  + (e_1-e_{12})\ot a + (e_{132}-e_{13})\ot b + (e_{123}-e_{23})\ot c$$
\begin{equation}\Delta(b) = b\ot 1  + (e_1-e_{23})\ot b + (e_{132}-e_{12})\ot c + (e_{123}-e_{13})\ot a\end{equation}
$$\Delta(c) = c\ot 1  + (e_1-e_{13})\ot c + (e_{132}-e_{23})\ot a + (e_{123}-e_{12})\ot b.$$

Let now $W$ be a cocycle deformation of $H$. We would like to analyze, as before, the structure of $W$ and describe the moduli space of all cocycle deformations.
We start now by considering the restriction of $W$ to $K[S_3]$. 
In other words: let us write $W_{S_3}:= \rho^{-1}(W\ot K[S_3])$ where $\rho$ denotes, as before, the coaction of $H$ on $W$.
By using the fact that $W$ is isomorphic with $H$ as an $H$-comodule, we get that $dim(W_{S_3})=6$, and that this is a cocycle deformation of $K[S_3]$
(in cocycle terms, we will just get here the restriction of the cocycle on $H$ to $K[S_3]$).
We know that all cocycle deformations on $K[S_3]$ are trivial. Indeed, this follows from Proposition \ref{prop:dualgroup} above, and the fact that 
the group $S_3$ does not contain any nontrivial subgroups of central type.
We thus fix an isomorphism $\Psi:K[S_3]\to W_{S_3}$ as $K[S_3]$-comodule algebras.

Notice that this isomorphism is not unique: indeed, the set of such isomorphisms is a torsor over the group of automorphisms of $K[S_3]$ as a $K[S_3]$-comodule algebra,
which is the finite group $S_3$. This lack of uniqueness, very similar to the lack of uniqueness in choosing $w_{(12)}$ in the previous example, 
will come into play later when we will determine the invariants.

We consider the isomorphism $\Psi$ from now on as an identification. We thus have a basis for $W_{S_3}$ given by $\{e_g\}_{g\in S_3}$, 
the multiplication is given by $$e_ge_h = \delta_{g,h}e_g$$ and the coaction of $H$ is given by $$\rho(e_g) = \sum_{g_1g_2=g}e_{g_1}\ot e_{g_2}\in W\ot H.$$
Next, we consider the subspace $$W_1:= \rho^{-1}(W\ot span\{1,a,b,c\}).$$ 
By comparing $W$ and $H$ as $H$-comodules we find out that $dim(W_1)=4$. As can easily be seen, the multiplicative unit $1\in W$ is contained in $W_1$.
We now define the projections $$E_h:W\to W$$ \begin{equation}w\mapsto \sum_{g\in S_3} e_g\cdot w \cdot e_{hg}.\end{equation}
These maps are projections because all the elements $e_g$ are pairwise orthogonal idempotents. 
A direct calculation shows that $$\sum_h E_h = Id_W \text{ and that }E_hE_{h'} = \delta_{h,h'}E_h.$$
Notice that for $w\in W_{S_3}$ it holds that $E_h(w) = \delta_{1,h}w$. This follows from the fact that the subalgebra $W_{S_3}$ is commutative.

Using again the isomorphism of $W$ and $H$ as $H$-comodules, we find out that for every $w\in W_1$ it holds that $$\rho(w)-w\ot 1 \in W_{S_3}\ot span\{a,b,c\}.$$
We write $$\rho(w) = w\ot 1 + r_a(w)\ot a + r_b(w)\ot b + r_c(w)\ot c.$$
We use the fact that the map $\rho$ is multiplicative. We calculate $\rho(E_1(w)):$
$$\rho(E_1(w)) = \sum_{g_1g_2=g_3g_4} (e_{g_1}\ot e_{g_2}) \cdot \rho(w)\cdot (e_{g_3}\ot e_{g_4}) = $$ $$
\sum_{g_1g_2=g_3g_4} [e_{g_1}we_{g_3}\ot e_{g_2}e_{g_4} + e_{g_1}r_a(w)e_{g_3}\ot e_{g_2}ae_{g_4} + $$ $$e_{g_1}r_b(w)e_{g_3}\ot e_{g_2}be_{g_4} + e_{g_1}r_c(w)e_{g_3}\ot e_{g_2}ce_{g_4}]=$$  
$$\sum_{g_1,g_2\in G}[e_{g_1}we_{g_1}\ot e_{g_2} + e_{g_1}r_a(w)\ot e_{g_2}ae_{g_2} + $$ \begin{equation}e_{g_1}r_b(w)\ot e_{g_2}be_{g_2} + e_{g_1}r_c(w)\ot e_{g_2}ce_{g_2}]= 
E_1(w)\ot 1.\end{equation}
We have used here the fact that the elements $r_a(w),r_b(w)$ and $r_c(w)$ are contained in the commutative algebra $W_{S_3}$, and that for every $g\in G$ it holds that 
$e_gae_g = e_gbe_g = e_gce_g = 0$ in $H$. The fact that $\rho(E_1(w)) = E_1(w)\ot 1$ implies that $E_1(w)$ is a scalar multiple of 1.
This implies that the image of $E_1$ is exactly $K1$, and the kernel of $E_1$ is therefore 3 dimensional. We denote this kernel by $W_2$.

We consider next the map $r_a:W_1\to W_{S_3}$. This is a linear map, and by comparing again with $H$, we find out that the kernel of this map is spanned by 1.
It follows that the restriction $r_a:W_2\to W_{S_3}$ is injective.
We next define $$w_a = r_a^{-1}(e_1-e_{12}), w_b = r_a^{-1}(e_{123}-e_{13}) \text{ and }w_c = r_a^{-1}(e_{132}-e_{23}).$$
Again, we know that all these elements are contained in the image of $r_a$ by comparing $W$ with $H$.
We have to be a bit careful here, since the map $r_a$ depends on the isomorphism $\Psi$ we chose above.
It can be shown that the above elements will be in the image of $r_a$ for every choice of $\Psi$.
The fact that the elements $e_1-e_{12}$, $e_{123}-e_{13}$ and $e_{132}-e_{23}$ are linearly independent in $W_{S_3}$ implies that the elements $w_a,w_b,w_c\in W_1$ are linearly independent as well.
Considering again the isomorphism between $W$ and $H$ we find out the following explicit formulas for $\rho$:
$$\rho(w_a) = w_a\ot 1  + (e_1-e_{12})\ot a + (e_{132}-e_{13})\ot b + (e_{123}-e_{23})\ot c$$
\begin{equation}\label{eq:Rhos}\rho(w_b) = w_b\ot 1  + (e_1-e_{23})\ot b + (e_{132}-e_{12})\ot c + (e_{123}-e_{13})\ot a\end{equation}
$$\rho(w_c) = w_c\ot 1  + (e_1-e_{13})\ot c + (e_{132}-e_{23})\ot a + (e_{123}-e_{12})\ot b.$$

Using the above formulas for $\rho$ we calculate the projections $E_h$ defined before to the elements $w_a,w_b$ and $w_c$. 
For $h\notin \{(12),(23),(13)\}$ we use the fact that inside $H$ we have $e_xae_{hx}=0$, and similarly for $b$ and $c$. We find out that for such $h$ it holds that
$\rho(E_h(w_a)) = E_h(w_a)\ot 1$. This implies that $E_h(w_a)\in K1$. But then $E_h^2(w_a) = 0$. Using the fact that $E_h$ is a projection, we conclude that $E_h(w_a)=0$.
A similar result holds for $w_b$ and $w_c$.

For the transpositions, we calculate $E_{(23)}(w_a)$, using the commutativity of $W_{S_3}$. 
Similar to the calculation of $E_1(w)$ before, and using the explicit formulas for $\rho$ above, we find out that $$\rho(E_{(23)}(w_a)) = E_{(23)}(w_a)\ot 1.$$ 
By the same argument as before we conclude that $$E_{(23)}(w_a)=0.$$
Similar results hold for $E_{(13)}(w_a), E_{(12)}(w_b), E_{(13)}(w_b), E_{(12)}(w_c),$ and $E_{(13)}(w_c)$.
If we write $Im(E_h) = W^h$ then this gives us a direct sum decomposition $W=\oplus_h W^h$.
An easy calculation then shows that $$W^{h_1}\cdot W^{h_2}\subseteq W^{h_1h_2}\text{ for every } h_1,h_2\in S_3.$$

The proof of the following lemma is very similar to the proof of Lemma \ref{lemma:productsbasis}, and we omit it.
\begin{lemma}
The set of products $$\{1,w_a,w_b,w_c,w_aw_b,w_aw_c,w_bw_a,w_bw_c,$$ $$w_aw_bw_a,w_aw_bw_c,w_bw_aw_c,w_aw_bw_aw_c\}\cdot \{e_g|g\in S_3\}$$ is a basis for $W$
\end{lemma}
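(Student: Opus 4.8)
The plan is to mirror the proof of Lemma \ref{lemma:productsbasis}. The proposed spanning set has $12\cdot 6 = 72$ elements, and $\dim W=\dim H=72$, so it suffices to prove linear independence: $72$ independent vectors in a $72$-dimensional space automatically form a basis. The organizing device is the grading of $\B(V)$. The comultiplication formulas \eqref{eq:Rhos} (together with $\Delta(e_g)=\sum_{g_1g_2=g}e_{g_1}\ot e_{g_2}$) show that $H$ is a graded coalgebra, $\Delta(H_k)\subseteq\bigoplus_{i+j=k}H_i\ot H_j$, where $H_k$ denotes the degree-$k$ component for the $\B(V)$-grading. Pulling back along $\rho$ gives a multiplicative filtration $F_kW=\rho^{-1}\big(W\ot\bigoplus_{i\le k}H_i\big)$. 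Since $\rho(e_g)\in W\ot H_0$ and each $\rho(w_x)\in W\ot(H_0\oplus H_1)$ for $x\in\{a,b,c\}$, multiplicativity of $\rho$ shows that any monomial of $\B(V)$-degree $d$ in $w_a,w_b,w_c$, multiplied by an idempotent $e_g$, lies in $F_dW$.

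Suppose then a nontrivial relation $\sum_m (\text{monomial }m)\cdot\big(\sum_g t_{m,g}e_g\big)=0$, where $m$ ranges over the twelve listed monomials, and let $d$ be the largest degree of a monomial occurring with a nonzero coefficient. I would apply $(\mathrm{id}_W\ot\pi_d)\circ\rho$, where $\pi_d\colon H\to H_d$ is the projection onto the degree-$d$ homogeneous part. Every term coming from a monomial of degree $<d$ dies, since its image under $\rho$ lies in $W\ot\bigoplus_{i<d}H_i$, and one is left only with the leading terms of the degree-$d$ monomials inside $W\ot H_d$. Writing the degree-$1$ part of $\rho(w_x)$ as $\sum_y u^x_y\ot y$ with coefficients $u^x_y\in W_{S_3}$ read off from \eqref{eq:Rhos}, the leading term of $w_{x_1}\cdots w_{x_d}$ is the product $\big(\sum_y u^{x_1}_y\ot y\big)\cdots\big(\sum_y u^{x_d}_y\ot y\big)$, all computed in $W\ot H$ and projected onto $H_d$. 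The claim to establish is that these leading terms, for the degree-$d$ monomials among the twelve, times the $e_g$, are linearly independent in $W\ot H_d$; this forces all $t_{m,g}=0$ for $m$ of degree $d$, after which descending induction on $d$ finishes the proof.

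The step that needs real work, and is the only genuine difference from the pointed case, is this leading-term analysis, because the comultiplication of $w_a,w_b,w_c$ now spreads over all three letters rather than pairing with a single group-like element. The cleanest way to control it is to observe that the leading symbols $\bar w_a,\bar w_b,\bar w_c$ in the associated graded $\mathrm{gr}_\bullet W$ satisfy the homogeneous Nichols relations: for instance the degree-$2$ leading term of $w_a^2$ is $\sum_{x,y}u^a_x u^a_y\ot xy$, and this vanishes because the diagonal terms carry $x^2=0$ while the off-diagonal products $u^a_x u^a_y$ vanish for $x\neq y$, their idempotent supports $\{1,(12)\}$, $\{(132),(13)\}$, $\{(123),(23)\}$ being pairwise disjoint; an analogous computation handles $\bar w_a\bar w_b+\bar w_b\bar w_c+\bar w_c\bar w_a$ and its companion. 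Consequently the leading symbols generate a graded quotient of $\B(V)\#K[S_3]=H$, which by the dimension count is all of $\mathrm{gr}_\bullet W$, and the twelve monomials descend to the Fomin--Kirillov PBW basis of $\B(V)$ times the $e_g$; their independence is then exactly the known independence of that basis in $H$. I expect verifying these leading relations to be the main obstacle, since it is precisely where the more complicated, non-group-like comultiplication enters the computation.
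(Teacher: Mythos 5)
Your overall framework is the right one, and it is essentially what the paper intends (the omitted proof is the top-degree projection argument of Lemma \ref{lemma:productsbasis} adapted to this setting): the coalgebra grading of $H$, the filtration $F_kW=\rho^{-1}\bigl(W\ot\bigoplus_{i\le k}H_i\bigr)$, the reduction to independence of leading terms by descending induction, and your verification of the homogeneous relations for the leading symbols are all correct --- in particular the disjoint-support computation showing $\bar w_a^2=0$ in $\mathrm{gr}_2 W$, and the analogous one for the cyclic relations, check out. (One small omission: to get an algebra map out of $H$ you also need the commutation relations between $\bar e_g$ and $\bar w_a,\bar w_b,\bar w_c$; these hold exactly in $W$ thanks to the decomposition $w_a\in W^{(12)}$, $w_b\in W^{(23)}$, $w_c\in W^{(13)}$ established via the projections $E_h$ before the lemma, so this is easily repaired.)

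The genuine gap is in your last inference. From the relations you obtain a surjection $\phi$ from $H$ onto the subalgebra $S\subseteq \mathrm{gr}_\bullet W$ generated by the leading symbols, so $\dim S\le 72$; but ``by the dimension count $S$ is all of $\mathrm{gr}_\bullet W$'' does not follow. That conclusion would require either surjectivity of $\phi$ onto $\mathrm{gr}_\bullet W$ --- i.e.\ that the symbols generate $\mathrm{gr}_\bullet W$, which is equivalent to your $72$ products spanning $W$, i.e.\ to the lemma itself --- or injectivity of $\phi$, which is literally the independence of the images of the Fomin--Kirillov basis that you want to deduce. As written, the step assumes what it proves: a quotient of $H$ sitting inside a $72$-dimensional algebra can perfectly well be proper. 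Two ways to close the gap. (i) Upgrade $\phi$ to a morphism of $H$-comodule algebras ($\mathrm{gr}_\bullet W$ carries the coaction induced by $\rho$; compatibility need only be checked on generators, where it is Equation \ref{eq:Rhos}), and then use that $H$ has no nonzero proper two-sided ideal which is also a right coideal: for such an ideal $I$, the Galois map $\beta(x\ot y)=xy_1\ot y_2$ maps both $H\ot I$ and $I\ot H$ into, hence by bijectivity of $\beta$ and dimension count onto, $I\ot H$, so $H\ot I=I\ot H$; if $I$ is proper, choosing $0\neq f\in H^*$ vanishing on $I$ and applying $\mathrm{id}\ot f$ forces $I=0$. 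Since $\ker\phi$ is such an ideal and $\phi(1)=1\neq 0$ in $\mathrm{gr}_\bullet W\neq 0$, $\phi$ is injective and your conclusion follows. (ii) Alternatively, do the leading-term computation directly, which is what the paper's ``very similar'' proof amounts to: the leading term of $w_{x_1}\cdots w_{x_d}e_g$ equals $\sum_{g'}e_{g'}\ot\bigl(g'^{-1}\cdot(x_1\cdots x_d)\bigr)e_{g'^{-1}g}$, where $g'$ acts on $\B(V)$ by degree-preserving (signed) automorphisms; since the $e_{g'}$ are orthogonal idempotents, independence reduces to independence of the degree-$d$ monomials inside $\B(V)_d$, with the idempotent decomposition plus the $S_3$-action playing the role that invertibility of the group-likes $w_g$ played in the pointed case.
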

We almost have the entire structure of $W$ in our hands now. The last thing that we need to do is to find out how the homogeneous defining relations in $\B(V)$ are deformed in $W$.

Consider the element $w_a^2\in W$. We calculate $\rho(w_a^2):$
$$\rho(w_a^2) = \rho(w_a)^2 = (w_a\ot 1 + (e_1-e_{12})\ot a + (e_{132}-e_{13})\ot b + (e_{123}-e_{23})\ot c)^2 = $$
$$ w_a^2\ot 1 + (a(e_1-e_{12}) + (e_1-e_{12})a)\ot a + (a(e_{132}-e_{13}) + (e_{132}-e_{13})a)\ot b + $$ \begin{equation}(a(e_{123}-e_{23}) + (e_{123}-e_{23})a)\ot c = w_a^2\ot 1.\end{equation}
We have used here the fact that in $H$ it holds that $a^2=b^2=c^2=0$, and the fact that $E_{(12)}(w_a)=w_a$, which implies that all the anti-commutators vanish.
This implies that $w_a^2\in K1$ and we write $$w_a^2=\la_a.$$ Similarly, $w_b^2\in K1$ and $w_c^2\in K1$ and we write $$w_b^2=\la_b\text{ and }w_c^2 = \la_c.$$

We turn next to the second type of relations. A similar calculation reveals the fact that 
$$\rho(w_aw_b + w_bw_c + w_cw_a) = w_aw_b + w_bw_c + w_cw_a\ot 1\text{ and }$$
\begin{equation}\rho(w_aw_c + w_cw_b + w_bw_a) = w_aw_c + w_cw_b + w_bw_a\ot 1.\end{equation} This implies that these elements are also scalar multiples of 1.
But $1\in W^1$ and $w_aw_b+w_bw_c + w_cw_a\in W^{(123)}$, and so we get the relations
$$w_aw_b + w_bw_c + w_cw_a=w_aw_c + w_cw_b + w_bw_a=0.$$ Notice that the above problem does not arise for $w_a^2$ since $w_a^2\in W^1$.

The last relations already give us a complete description of $W$. We summarize it here:
\begin{proposition}\label{prop:relations}
The algebra $W$ is generated by the elements $\{e_g|g\in S_3\}, w_a,w_b,w_c$. 
These elements satisfy the following relations:
$$e_ge_h = \delta_{g,h}e_g, \sum_g e_g=1$$
$$e_gw_a = w_ae_{g(12)}, e_gw_b = w_be_{g(23)}, e_gw_c = w_ce_{g(23)}$$
$$w_a^2 = \la_a, w_b^2 = \la_b, w_c^2 = \la_c$$
$$w_aw_b+w_bw_c+w_cw_a = w_aw_c+w_cw_b+w_bw_a = 0$$
\end{proposition}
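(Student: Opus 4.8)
The plan is to recognize that every relation in the statement has already been established in the analysis preceding it, so that the real content is the completeness of the presentation. That $\{e_g\}_{g\in S_3},w_a,w_b,w_c$ generate $W$ is immediate from the basis lemma stated just above, and each displayed relation has been verified: the idempotent relations $e_ge_h=\delta_{g,h}e_g$ and $\sum_g e_g=1$ come from the identification $\Psi\colon K[S_3]\to W_{S_3}$; the commutation rules such as $e_gw_a=w_ae_{g(12)}$ record the grading $W=\oplus_h W^h$ with $W^{h_1}W^{h_2}\subseteq W^{h_1h_2}$; the relations $w_a^2=\la_a,\ w_b^2=\la_b,\ w_c^2=\la_c$ were obtained by applying $\rho$ and using $a^2=b^2=c^2=0$ in $H$; and the two vanishing quadratic expressions were obtained the same way, together with the observation that $w_aw_b+w_bw_c+w_cw_a\in W^{(123)}$ while $1\in W^1$. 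Hence there is a surjective algebra map $\phi\colon A\to W$, where $A$ is the abstract algebra with the displayed presentation, and it remains to prove that $\phi$ is injective.

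First I would reduce the claim to the inequality $\dim_K A\le 72=\dim_K W$, which together with surjectivity forces $\phi$ to be an isomorphism. To bound $\dim_K A$ I bring an arbitrary word in the generators to a normal form: the commutation relations let me push every idempotent to the right of every $w_a,w_b,w_c$ (each passage multiplying the subscript by a transposition), and then $e_ge_h=\delta_{g,h}e_g$ collapses the trailing product of idempotents to a single $e_g$. Writing $B\subseteq A$ for the subalgebra generated by $w_a,w_b,w_c$, this shows $A=\sum_{g\in S_3}Be_g$, whence $\dim_K A\le 6\cdot\dim_K B$. It therefore suffices to prove $\dim_K B\le 12$.

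The main obstacle is exactly this last bound, which is the purely algebraic statement that the deformed Fomin--Kirillov relations cut the free algebra down to dimension $12$. The subalgebra $B$ is a homomorphic image of $R_{\la}:=K\langle w_a,w_b,w_c\rangle/(w_a^2-\la_a,\ w_b^2-\la_b,\ w_c^2-\la_c,\ w_aw_b+w_bw_c+w_cw_a,\ w_aw_c+w_cw_b+w_bw_a)$, so it is enough to show $\dim_K R_{\la}\le 12$. I would do this by filtering $R_{\la}$ by the total degree in $w_a,w_b,w_c$ and passing to the associated graded algebra: since each $\la_x$ has degree $0$, the relation $w_x^2-\la_x$ has leading term $w_x^2$, while the quadratic relations are already homogeneous, so $\mathrm{gr}\,R_{\la}$ is a quotient of the homogeneous algebra $\B(V)=FK_3$, which has graded dimensions $1,3,4,3,1$ and hence total dimension $12$. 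Since $\dim_K R_{\la}=\dim_K\mathrm{gr}\,R_{\la}$ for the standard degree filtration, we conclude $\dim_K R_{\la}\le 12$, hence $\dim_K A\le 72$, finishing the proof.

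As an alternative to the dimension count, one could follow the pattern of Proposition~\ref{prop:allS3}: equip $A$ with the coaction $\rho$ given by the formulas in Equation~\ref{eq:Rhos} (and $\rho(e_g)=\sum_{g_1g_2=g}e_{g_1}\ot e_{g_2}$), define the candidate inverse datum $\ti{T}\colon H\to A^{op}\ot A$ on generators, check the two compatibilities of Lemma~\ref{lemma:Minv} on the multiplicative generators, and invoke Proposition~\ref{prop:FormofW} to conclude that $A$ is a cocycle deformation of $H$ as soon as $A\neq 0$; the nonvanishing would again be reduced to $R_{\la}\neq 0$ and checked by exhibiting explicit finite-dimensional representations, exactly as in the $KS_3$ case.
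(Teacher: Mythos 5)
Your proposal is correct, and its first paragraph is precisely the paper's own proof: in the paper the proposition carries no separate proof because it is a summary of the preceding analysis, with generation coming from the basis-of-products lemma and each relation coming from the $\rho$-computations and the $S_3$-grading argument (the cyclic sums are coinvariant, hence lie in $K1\subseteq W^1$, while they also lie in $W^{(123)}$, forcing them to vanish). Where you genuinely go beyond the paper is in proving that the listed relations are \emph{defining}, i.e.\ that the surjection $\phi:A\to W$ from the abstractly presented algebra is an isomorphism; the paper asserts this only implicitly (``a complete description of $W$'') and never argues it, since it works throughout with $W$ itself and its explicit basis. Your dimension count closes that gap cleanly: pushing idempotents to one side gives $\dim_K A\le 6\dim_K B$, and the degree-filtration argument --- the associated graded of $R_{\la}$ is a quotient of $FK_3$, whose graded dimensions $1,3,4,3,1$ sum to $12$ --- gives $\dim_K B\le 12$, hence $\dim_K A\le 72=\dim_K W$, so surjectivity forces injectivity. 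This is exactly the PBW-type mechanism (inhomogeneous relations with homogeneous leading terms) that the paper's phrasing presupposes but does not spell out, and it is also what justifies using the presentation in the paper's subsequent existence proposition. Your alternative route via Lemma \ref{lemma:Minv} and the nonvanishing of $R_{\la}$ is, in turn, precisely how the paper handles that existence statement, so it is fully consistent with the paper's methods. One cosmetic remark: the commutation relations as printed contain a typo (the relation for $w_c$ should involve the transposition $(13)$, not $(23)$, to match $e_gc=ce_{(13)g}$ in $H$); your argument is insensitive to this, since it only uses that moving an idempotent across each $w_x$ changes its subscript by a fixed transposition.
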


We know the map $\rho$ on the generators $e_g$ and $w_a,w_b,w_c$. 
Indeed, we have $\rho(e_g) = \sum_{g'\in S_3}e_{g'}\ot e_{g'^{-1}g}$ and $\rho(w_x)$ is given by Equation \ref{eq:Rhos}.
We claim the following:
\begin{proposition}
For every value of $(\la_a,\la_b,\la_c)$ there exists a cocycle deformation algebra $W$ in which $w_a^2=\la_a$, $w_b^2=\la_b$ and $w_c^2=\la_c$.
\end{proposition}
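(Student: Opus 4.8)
The plan is to follow the same strategy used in the proof of Proposition \ref{prop:allS3} for the pointed Hopf algebra $\B(V)\#KS_3$: fix a triple $(\la_a,\la_b,\la_c)$, define $W$ abstractly by the presentation of Proposition \ref{prop:relations} with these scalars, equip it with a candidate coaction and a candidate map $\ti{T}$, and then invoke Lemma \ref{lemma:Minv} together with Proposition \ref{prop:FormofW}. Concretely, I would let $W$ be the algebra generated by orthogonal idempotents $\{e_g\}_{g\in S_3}$ with $\sum_g e_g=1$, together with $w_a,w_b,w_c$, subject to $e_gw_a=w_ae_{g(12)}$, $e_gw_b=w_be_{g(23)}$, $e_gw_c=w_ce_{g(13)}$, the quadratic relations $w_a^2=\la_a$, $w_b^2=\la_b$, $w_c^2=\la_c$, and $w_aw_b+w_bw_c+w_cw_a=w_aw_c+w_cw_b+w_bw_a=0$. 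I would define $\rho:W\to W\ot H$ on generators by $\rho(e_g)=\sum_{g_1g_2=g}e_{g_1}\ot e_{g_2}$ and by the formulas in Equation \ref{eq:Rhos} for $w_a,w_b,w_c$, and define $\ti{T}:H\to W^{op}\ot W$ as the algebra map given on the generators $e_g,a,b,c$ of $H$ by the evident analogues of the formulas used in the pointed case.

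The first block of work is to check that $\rho$ and $\ti{T}$ are well defined with the required properties. For $\rho$ one verifies that it annihilates each defining relation of $W$ and is coassociative and counital; these are finite computations on generators, entirely parallel to the verification for $\B(V)\#KS_3$, using that $e_xae_y$, $e_xbe_y$ and $e_xce_y$ vanish in $H$ for the relevant indices and that $a^2=b^2=c^2=0$ there. For $\ti{T}$ one checks that the two compositions of Lemma \ref{lemma:Minv} reduce to the identity on the multiplicative generators of $H$ and of $W$; since that lemma permits checking on generating sets, this is again a routine finite calculation. Once this is done, Lemma \ref{lemma:Minv} forces the map $M$ attached to $W$ to be invertible.

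The crux, and the step I expect to be the genuine obstacle, is showing that $W\neq0$; everything else is formal, since once $W\neq0$ and $M$ is invertible, Proposition \ref{prop:FormofW} yields $W\cong\allH$ for some cocycle $\alpha$, hence $\dim W=\dim H=72$, and reading off the relations gives $w_a^2=\la_a$, $w_b^2=\la_b$, $w_c^2=\la_c$. As in the pointed case, the danger is that deforming the homogeneous Fomin--Kirillov relations into the inhomogeneous relations $w_x^2=\la_x$ could collapse the algebra and force $1=0$. To rule this out I would exhibit, for each triple, a nonzero unital representation of $W$, which suffices because the image of $1=\sum_g e_g$ is then the identity operator. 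The natural candidate is the six-dimensional module $M=\bigoplus_{h\in S_3}Kv_h$ on which $e_g$ acts as the projection onto $Kv_g$ and $w_a,w_b,w_c$ act as weighted matchings, $w_av_h=t_{a,h}v_{h(12)}$, $w_bv_h=t_{b,h}v_{h(23)}$, $w_cv_h=t_{c,h}v_{h(13)}$ for scalars $t_{x,h}$ to be chosen. The idempotent relations and the commutation relations between the $e_g$ and the $w_x$ hold automatically, since right multiplication by a transposition is a fixed-point-free involution of $S_3$; and the three monomials occurring in each of the two sum relations act as scalar multiples of one and the same weighted matching (this is exactly the compatibility responsible for the quadratic relations of $\B(V)$). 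Thus the whole presentation collapses to a finite system of scalar equations: $t_{x,h}\,t_{x,h\tau_x}=\la_x$ for each transposition-pair, where $\tau_a=(12)$, $\tau_b=(23)$, $\tau_c=(13)$, together with the vanishing of two sums of three weight-products coming from $w_aw_b+w_bw_c+w_cw_a=0$ and $w_aw_c+w_cw_b+w_bw_a=0$.

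Finally I would solve this system by a short case analysis on which of $\la_a,\la_b,\la_c$ vanish, mirroring the case split in the proof of Proposition \ref{prop:allS3}. If all three vanish one takes $w_a=w_b=w_c=0$; if some vanish one sets the corresponding weights to $0$, which kills every mixed monomial appearing in the two sum relations; and in the remaining cases one chooses the nonzero weights so that each of the two sums of three weight-products takes the shape $1+\zeta+\zeta^2=0$ for a primitive cube root of unity $\zeta$, which is available since $K$ is algebraically closed. Producing this explicit solution gives a nonzero unital representation of $W$, hence $W\neq0$, and the argument is complete.
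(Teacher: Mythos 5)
Your overall framework is sound and runs parallel to the paper's: present $W$ by the relations of Proposition \ref{prop:relations}, verify $\rho$ and $\ti{T}$ on generators via Lemma \ref{lemma:Minv}, and reduce everything to proving $W\neq 0$ by exhibiting a nonzero unital representation. Your reduction of the nonvanishing question to a scalar system is also correct: on $M=\bigoplus_{h\in S_3}Kv_h$ the idempotent and commutation relations hold automatically, all three monomials in each sum relation shift $v_h$ into the same line, and the presentation collapses to $t_{x,h}t_{x,h\tau_x}=\la_x$ together with, for every $h\in S_3$, the two equations $t_{b,h}t_{a,h(23)}+t_{c,h}t_{b,h(13)}+t_{a,h}t_{c,h(12)}=0$ and $t_{c,h}t_{a,h(13)}+t_{b,h}t_{c,h(23)}+t_{a,h}t_{b,h(12)}=0$. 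The gap is in the final case analysis, where you claim to solve this system: two of your three cases fail.

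First, suppose exactly one scalar vanishes, say $\la_a=0$ and $\la_b,\la_c\neq 0$. The relations $t_{b,h}t_{b,h(23)}=\la_b$ and $t_{c,h}t_{c,h(13)}=\la_c$ force \emph{all} $b$- and $c$-weights to be nonzero, and the monomial $w_bw_c$ contributes the term $t_{c,h}t_{b,h(13)}$, which contains no $a$-weight at all; so setting $t_{a,h}=0$ for all $h$ turns the first sum relation into $t_{c,h}t_{b,h(13)}=0$, a contradiction. (A matching representation does exist here, but only with $t_{a,h}\neq 0$ on a suitable half of $S_3$, not with $t_a\equiv 0$.) Second, in the case where all three scalars are nonzero, your cube-root recipe requires the three terms of each sum equation to be pairwise equal up to cube roots of unity. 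Multiplying the first family of equations over all $h\in S_3$ and using $\prod_h t_{x,h}=\la_x^3$ (a consequence of the quadratic relations), the total products of the three term-families are $\la_a^3\la_b^3$, $\la_b^3\la_c^3$ and $\la_a^3\la_c^3$; the cube-root ansatz then forces $(\la_a/\la_c)^3$, $(\la_b/\la_a)^3$, $(\la_c/\la_b)^3$ to be cube roots of unity, which fails for generic $(\la_a,\la_b,\la_c)$. Moreover, the twelve sum equations share variables, so they cannot be arranged one at a time in any case. The statement you want is true: for instance, when $\la_b\la_c\neq 0$ and $\la_b\neq\la_c$, taking $t_{b,h}\in\{\beta,\la_b/\beta\}$ and $t_{c,h}\in\{\gamma,\la_c/\gamma\}$ on appropriate halves of $S_3$ makes all three product conditions collapse to the single quadratic $\la_b\la_c(\gamma-\beta)(\la_c\beta-\la_b\gamma)=\la_a(\la_b-\la_c)^2\beta\gamma$, which is solvable with $\beta\gamma\neq0$ over $K$. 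But such an argument is genuinely needed, and it essentially reproduces what the paper does algebraically: when $\la_b\neq\la_c$ the paper shows $(\la_b-\la_c)w_a=w_cw_bw_c-w_bw_cw_b$, eliminates $w_a$, and exhibits the subalgebra $R$ as a crossed product of a visibly nonzero commutative algebra by $\Z/2$; when all three scalars are equal it invokes Proposition \ref{prop:allS3} with $\mu=0$. As written, your proof does not establish nonvanishing outside the two cases $\la_a=\la_b=\la_c=0$ and ``at most one $\la_x$ nonzero''.
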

\begin{proof} (see also Theorem 3.2. of \cite{AnG})
As in the Example of Subsection \ref{subsec:S3}, it will be enough to prove that the algebra $R$ generated by $w_a,w_b$ and $w_c$ modulo the relations $w_aw_b+w_bw_c+w_cw_a=w_aw_c+w_cw_b+w_bw_a=0, w_a^2=\la_a, w_b^2=\la_b,$ and $w_c^2=\la_c$
is nonzero. In case $\la_a=\la_b=\la_c$ this follows from Proposition \ref{prop:allS3}, by considering the case where $\la=\la_a=\la_b=\la_c$ and $\mu=0$.

Assume then that $\#\{\la_a,\la_b,\la_c\}\geq 2$. Without loss of generality we assume that $\la_b\neq \la_c$. 
We begin by making some calculations inside the algebra $R$. We have $$w_a\cdot w_c^2= (w_aw_c)w_c = -w_bw_aw_c-w_cw_bw_c. $$
On the other hand, we have that $$w_bw_aw_c = -w_bw_bw_a - w_bw_cw_b$$ and so we get $$w_a\cdot w_c^2 = w_b^2w_a + w_bw_cw_b - w_cw_bw_c.$$
This implies that $$(\la_b-\la_c)w_a = w_cw_bw_c-w_bw_cw_b.$$ By our assumption this means that $w_a$ can be written as a noncommutative polynomial in $w_b$ and $w_c$, and so the algebra $R$ is generated by the elements $w_b$ and $w_c$.
We re-write the defining relations of $R$ as noncommutative polynomials in $w_b$ and $w_c$. For this, we write $(\la_b-\la_c)^{-1}=x$. 
The cyclic relation $w_aw_b+w_bw_c + w_cw_a=0$ then becomes, modulo the quadratic relations for $w_b$ and $w_c$:
$$x(w_cw_bw_c-w_bw_cw_b)w_b + w_bw_c + xw_c(w_cw_bw_c-w_bw_cw_b) = $$ \begin{equation}x(w_cw_bw_cw_b-w_bw_c(\la_b-\la_c)-w_cw_cw_cw_b) + w_bw_c = 0.\end{equation}
In other words, it becomes redundant. The same happens for the other cyclic relation.
We are left with the algebra generated by the elements $w_b$, $w_c$ and defined by the relations
$$w_b^2=\la_b,w_c^2=\la_c$$
\begin{equation}x^2(w_cw_bw_c-w_bw_cw_b)^2=\la_a\end{equation}
We open the parenthesis in the last relation and we get 
\begin{equation}x^2(\la_c^2\la_b + \la_b^2\la_c - (w_cw_b)^3 - (w_bw_c)^3)=\la_a\end{equation}
Since $\la_b\neq \la_c$ at least one of them is different from zero. Assume without loss of generality that $\la_b\neq 0$.
This implies that $w_b$ is invertible.
We write now $w_cw_b = v_1$, $w_bw_c=v_2$. Then the algebra $R$ is generated by the elements $w_b^{\pm 1}$, $v_1$ and $v_2$, and has the defining relations 
$$w_b^2=\la_b, v_1v_2 = \la_c, w_bv_1w_b^{-1}=v_2,w_bv_2w_b^{-1}=v_1\text{ and } $$ 
\begin{equation}\label{eq:Rel1} v_1^3+v_2^3 = y\text{ where }y = \la_ax^{-2}\la_c\la_b(\la_b+\la_c).\end{equation}
It follows that the subring $S\subseteq R$ generated by $v_1$ and $v_2$ is commutative. 
Moreover, since conjugation by $w_b$ stabilizes $S$ and stabilizes the relations, we see that the ring $R$ is the crossed product $$R=S*\langle w_b\rangle,$$
and that the ring $S$ can be represented by the generators $v_1,v_2$ and the relations $v_1v_2=v_2v_1= \la_c$ and $v_1^3+v_2^3=y$.
In order to prove that $R\neq 0$ it will thus be enough to prove that $S\neq 0$.

To prove this, consider first the case $\la_c=0$. We then have that $$S/(v_2) = K[v_1]/(v_1^3-z)\neq 0 $$ where $z$ is an appropriate scalar.
In case $\la_c\neq 0$ then $v_2$ is the inverse of $v_1$ up to a nonzero scalar, and we can write $$S = K[v_1^{\pm 1}]/(f)\neq 0$$ where $f$ is a polynomial of degree 6.
In any case, the ring $S$ is not the zero ring and we are done.
\end{proof}
So we see that every cocycle deformation of $H$ arises from a triple $(\la_a,\la_b,\la_c)$, and that every such triple gives a cocycle deformation.
However, there is no one to one correspondence between such triples and cocycle deformations of $H$. The reason for this is that the triple $(\la_a,\la_b,\la_c)$ 
depends on the choice of the isomorphism $\Psi$ we made before, and $\Psi$ is defined only up to an element of $S_3$.

The resulting action of the group $S_3$ on the variety $\Aa^3$ is given by permuting the coordinates $(\la_a,\la_b,\la_c)$.
This implies that the set of cocycle deformations is in a one to one correspondence with $\Aa^3/S_3$. 
Since the group $S_3$ is finite it is reductive, and for obvious reasons all the orbits for this action are closed.
We have $K[\Aa^3/S_3] = K[\Aa^3]^{S_3} = K[c_1,c_2,c_3]$ where $$c_1= \la_a+\la_b+\la_c, c_2 = \la_a\la_b + \la_b\la_c + \la_c\la_a, c_3=\la_a\la_b\la_c.s$$
In other words, the moduli space of cocycle deformation is again isomorphic with $\Aa^3$.
\begin{remark}
 It is also possible to receive this result by using reduction of the acting group to $S_3$. The result will be the same.
\end{remark}

Using again Lemma \ref{lemma:Schauenburg}, we can describe also the Hopf algebra $\alHal$. 
This Hopf algebra is again generated by $e_g,a,b,c$ with the same coproduct as before. 
The multiplication remains almost the same, except for the relations $a^2=b^2=c^2=0$, which deform to 
$$a^2 = (\la_a-\la_b)(e_{13}+e_{132}) + (\la_a-\la_c)(e_{23}+e_{123})$$
$$b^2 = (\la_b-\la_c)(e_{12}+e_{132}) + (\la_b-\la_a)(e_{13}+e_{123})$$
$$c^2 = (\la_c-\la_a)(e_{23}+e_{132}) + (\la_c-\la_b)(e_{12}+e_{123}).$$
These Hopf algebras were also described in \cite{AndVay1} and \cite{AndVay2}.
Notice that in this case most of the analysis of the cocycle deformation was done ``by hand'', and only at the very final step we have used the action of $S_3$ to describe the moduli space $X_H$ as an affine variety.
This still leads to some geometrical questions, which we will describe in the next section. 

\end{section}
\begin{section}{Some further questions}\label{sec:Questions}
In case the Hopf algebra $H$ is semisimple, it is known by Ocneanu's rigidity that there are only finitely many cocycle deformations.
In the two non-semisimple examples which we have seen here, the space of cocycle deformations were affine spaces.
It is easy to combine these two examples to receive Hopf algebras whose space of cocycle deformations is a disjoint union of affine spaces.
This leads us to the following conjecture:
\begin{conjecture}
 Let $H$ be a finite dimensional pointed Hopf algebra. The moduli space of 2-cocycles up to equivalence is then the disjoint union of affine spaces.
\end{conjecture}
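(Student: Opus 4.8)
The plan is to reduce the conjecture to a statement about liftings of the associated graded Hopf algebra, and then to run, in a uniform way, the reduction-of-structure-group argument that was carried out by hand for the Taft algebras and for $\B(V)\#KS_3$. Since $H$ is pointed its coradical is a group algebra $KG$ for a finite group $G$, and $\mathrm{gr}H\cong\B(V)\#KG$ for some $V\in{}^{KG}_{KG}\YD$. The strategy splits into two parts: first isolate a \emph{finite} discrete invariant that decomposes $X_H$ into open--closed pieces, and then prove that each piece is an affine space by normalising coordinates and applying Proposition \ref{prop:Reduction}.

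First I would extract the discrete invariant. For a cocycle deformation $W=\allH$, restricting the coaction $\rho$ to the coradical gives the subalgebra $W_{G}=\rho^{-1}(W\ot KG)$, which is itself a cocycle deformation of $KG$ and hence a twisted group algebra $K^{\beta}G$ with $[\beta]\in H^{2}(G,K^{\times})$. Because $G$ is finite and $K$ is algebraically closed of characteristic zero, $H^{2}(G,K^{\times})\cong\Hom(H_{2}(G,\Z),K^{\times})$ is a finite group. By the computation of the group-algebra case, the class $[\beta]$ is recovered from the basic invariants $c(l,\sigma,f,h(1),\ldots,h(l))$ evaluated on coradical data ($h(i)\in KG$), and these take values in roots of unity of bounded order; being polynomial functions on $Y_{H}$ with values in a finite set, they are locally constant. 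Hence $X_{H}=\bigsqcup_{[\beta]}X_{H}^{[\beta]}$ is a disjoint union of open--closed affine subvarieties, and it suffices to prove that each $X_{H}^{[\beta]}$ is an affine space.

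Next, for a fixed $[\beta]$ I would apply Proposition \ref{prop:Reduction}. Fixing a model $K^{\beta}G$ and a choice of lifts in $W$ of a generating set of $V$ (exactly as $w_a,w_b,w_c$ were produced over $S_3$), one cuts out a closed subvariety $Y'\subseteq Y_{H}$ of deformations in canonical form and reduces $\Ga=GL(W)$ to the small residual group $N$ preserving this normalisation; by Lemma \ref{lemma:FiniteAutomorphisms} all stabilisers are finite, and the examples suggest $N$ is a torus, possibly extended by a finite group permuting the chosen lifts. The remaining free structure constants are precisely the deformation parameters recording how the homogeneous defining relations of $\B(V)$ acquire lower-order correction terms, and one expects $K[Y']^{N}$ to be generated freely by these parameters.

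The hard part will be proving that these parameters are unobstructed, namely that the ideal of relations among the invariants is zero and that every choice of parameters yields a genuine nonzero cocycle deformation. In the examples this was the ad hoc step of exhibiting a nonzero quotient of the deformed Nichols algebra and then invoking Lemma \ref{lemma:Minv} and Proposition \ref{prop:FormofW} to conclude $W\neq0$. A uniform argument must replace these case-by-case constructions by the general lifting theory: the results of \cite{AndSch}, \cite{Masuoka2} and \cite{AnG} identify liftings of $\B(V)\#KG$ with cocycle deformations and describe the admissible parameters, and one would have to show that, for a fixed type $[\beta]$, these range freely over an affine space. This is exactly where the difficulty of the conjecture concentrates, since controlling the defining relations of an arbitrary finite-dimensional Nichols algebra and proving the deformation problem unobstructed is not known in general; should a residual finite group permute the parameters, one would additionally invoke a Chevalley--Shephard--Todd type argument (as in the $K[S_3]$ computation, where $\Aa^{3}/S_{3}\cong\Aa^{3}$) to retain an affine quotient.
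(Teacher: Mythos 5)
The statement you are trying to prove is posed in Section \ref{sec:Questions} of the paper as an open \emph{conjecture}; the paper offers no proof of it, so there is nothing to compare your argument against except its own internal completeness. And judged on that basis, your proposal is a reduction strategy rather than a proof: you yourself concede, in the final paragraph, that the unobstructedness step --- showing that the deformation parameters cut out by your normalised subvariety $Y'$ range \emph{freely} over an affine space, i.e.\ that the ideal of relations among the residual invariants vanishes and that every parameter value yields a nonzero comodule algebra --- ``is not known in general.'' That concession is not a technical loose end; it is the entire content of the conjecture. In each of the paper's examples (Taft algebras, $\B(V)\#KS_3$, $\B(V)\#K[S_3]$) this was exactly the step requiring an ad hoc construction: one had to exhibit a nonzero quotient of the deformed relation algebra (a matrix representation, or a crossed product over a commutative ring) before Lemma \ref{lemma:Minv} and Proposition \ref{prop:FormofW} could be invoked. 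No uniform substitute exists: for a general finite-dimensional pointed $H$ with non-abelian group of group-likes, the defining relations of the Nichols algebra $\B(V)$ are themselves unknown, and the lifting results you cite, in particular \cite{AnG}, apply to the diagonal (abelian coradical) case, so they cannot be used to close the gap in the generality the conjecture demands.

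Two further steps in your outline would also fail as stated. First, your description of the residual group $N$ as ``a torus, possibly extended by a finite group permuting the chosen lifts'' is speculative, and even granting it, your appeal to a Chevalley--Shephard--Todd argument is unjustified: a quotient $\Aa^m/N'$ by a finite group is an affine space only when $N'$ acts by pseudo-reflections. In the paper's $K[S_3]$ example this holds because $S_3$ permutes the coordinates $(\la_a,\la_b,\la_c)$, which is a reflection action, but nothing forces the residual action on deformation parameters to be of this type in general; generically such a quotient is a singular affine variety, which would actually \emph{disprove} the conjecture along your route rather than prove it. Second, the claim that every $GL(W)$-orbit meets $Y'$ in exactly one $N$-orbit --- the hypothesis of Proposition \ref{prop:Reduction} --- is precisely the normal-form analysis (existence, invertibility and uniqueness up to rescaling of the lifts $w_a,w_b,w_c$, triviality of $H^2(S_3,K^\times)$, etc.) that occupied several pages per example in the paper; you assume it can be done ``exactly as over $S_3$,'' but for arbitrary $V$ and $G$ the coradical cocycle $[\beta]$ may be nontrivial and the lifts need not exist in the canonical form you posit. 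Your first step (the decomposition $X_H=\bigsqcup_{[\beta]}X_H^{[\beta]}$ into open--closed pieces, via the finiteness of $H^2(G,K^\times)$ and local constancy of finitely-valued regular functions) is sound and is a reasonable way to organise an attack, but what remains after it is the conjecture itself.
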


In the example in Subsection \ref{subsec:S3} we have had a canonical way to construct the cocycle deformation out of the invariants.
Indeed, one sets $c_0=1$ and receives a cocycle deformation for which all the structure constants are polynomials in the invariants $\mu,\la$.
We thus get a vector bundle of cocycle deformations over the space $X_H$. In the terminology of \cite{HM}, we get a \textit{fine} moduli space of cocycle deformations.
\begin{question}
Let $H$ be a finite dimensional Hopf algebra. Is there a vector bundle of cocycle deformations over $X_H$, similar to the previous example?
\end{question}
For the Hopf algebra which appears in Subsection \ref{subsec:dualS3}, the quotient by the action of $S_3$ makes it unclear how can one construct such a vector bundle.
We do not know the answer to that question even for that case.

\end{section}
\begin{section}*{Acknowledgments}
The author was supported by the Research Training Group 1670, ``Mathematics Inspired by String Theory and Quantum Field Theory''.
\end{section}


\begin{thebibliography}{abc}
\bibitem[AAIMV14]{AAIMV} N. Andruskiewitsch, I. Angiono, A. Garcia Iglesias, A. Masuoka and C. Vay, Lifting via cocycle deformation, Journal of Pure and Applied Algebra, Volume 218, Issue 4, April 2014, Pages 684-703 
\bibitem[AHN10]{AHN} E. Aljadeff, D. Haile and M. Natapov, Graded identities of matrix algebras and the universal graded algebra, Trans. Amer. Math. Soc. Volume 362, Issue 6, 2010, Pages 3125-3147 
\bibitem[AK08]{AK} E. Aljadeff and C. Kassel, Polynomial identities and noncommutative versal torsors, Advances in Mathematics, Volume 218, Issue 5, 2008, Pages 1453-1495
\bibitem[AS10]{AndSch} N. Andruskiewitsch and H.-J. Schneider, On the classification of finite-dimensional pointed Hopf algebras, Annals of Mathematics, Volume 171, 2010, Pages 375-417
\bibitem[AV11]{AndVay1} N. Andruskiewitsch  and C. Vay, Finite Dimensional Hopf Algebras Over the Dual Group Algebra of the Symmetric Group in Three Letters,  Communications in Algebra Volume 39, Issue 12, 2011, Pages 4507-4517
\bibitem[AV12]{AndVay2} N. Andruskiewitsch  and C. Vay, On a family of Hopf algebras of dimension 72, Bull. Belg. Math. Soc. Simon Stevin, Volume 19, Number 3, 2012, Pages 415-443.

\bibitem[AG19]{AnG} I. Angiono and A. Garcia Iglesias, Liftings of Nichols algebras of diagonal type II. All liftings are cocycle deformations. Selecta Mathematica, New Series 25:5, 2019.
\bibitem[DKS03]{DKS} S. Datt, V. Kodiyalam and V. S. Sunder, Complete invariants for complex semisimple Hopf algebras. Math. Res. Lett. 10, Number 5-6, 2003, Pages 571-586
\bibitem[EG03]{EG3} P. Etingof and S. Gelaki, The classification of finite dimensional triangular Hopf algebras over an algebraically closed field of char 0, Mosc. Math. J. Volume 3, 2003, Pages 37-43.
\bibitem[ENO05]{ENO} P. Etingof, D. Nikshych and V. Ostrik, On fusion categories, Annals of Mathematics, Volume 162, Issue 2, 2005, Pages 581-642, 
\bibitem[FGM19]{FGM} F. Fantino, G. Andres Garcia and M. Mastnak, On finite-dimensional copointed Hopf algebras over dihedral groups, Journal of Pure and Applied Algebra Volume 223, Issue 8, August 2019, Pages 3611-3634	
\bibitem[FH91]{Fulton-Harris} W. Fulton and J. Harris, Representation theory. A first course. Graduate Texts in Mathematics, Readings in Mathematics. 129. 1991. New York: Springer-Verlag
\bibitem[GN07]{GN} C. Galindo and S. Natale, Simple Hopf algebras and deformations of finite groups, Math. Res. Lett. Volume 14, Number 6, 2007, Pages 943-954
\bibitem[GP17]{GalPla} C. Galindo and J. Plavnik, Tensor functors between Morita duals of fusion categories, Letters in Mathematical Physics, Volume 107, Issue 3, 2017, Pages 553-590 
\bibitem[GarM15]{GarMa} G. Andres Garcia and M. Mastnak, Deformation by cocycles of pointed Hopf algebras over non-abelian groups, Mathematics Research Letters 22, 2015, 59-92
\bibitem[GruM]{GruMa} L. Grunenfelder and M. Mastnak, Pointed Hopf algebras as cocycle deformations, arXiv:1010.4976v1
\bibitem[HM98]{HM} J. Harris and I. Morrison, Moduli of Curves, Springer Graduate Texts in Mathematics, Book 187, 1998.
\bibitem[HV19]{HecVen2} I. Heckenberger and L. Vendramin, PBW deformations of a Fomin-Kirillov algebra and other examples, arXiv:1703.10632v1
\bibitem[IM11]{IM} A. G. Iglesias and M. Mombelli, Representations of the category of modules over pointed Hopf algebras over $S_3$ and $S_4$, Pacific Journal of Mathematics, Volume 252, Number 2, 2011, Pages 343-378
\bibitem[Mas94]{Masuoka} A. Masuoka, Cleft extensions for a Hopf algebra generated by a nearly primitive element, Comm. Algebra, Volume 22, 1994, Pages 4537-4559
\bibitem[Mas08]{Masuoka2} A. Masuoka, Abelian and non-abelian second cohomologies of quantized enveloping algebras, J. Algebra 320, 2008, Pages 1-47.
\bibitem[Mei16]{Meir1} E. Meir, Descent, fields of invariants and generic forms via symmetric monoidal categories, Journal of Pure and Applied Algebra Volume 220, Issue 6, 2016, Pages 2077-2111
\bibitem[Mei17]{Meir2} E. Meir, Semisimple Hopf algebras via geometric invariant theory, Advances in Mathematics, Volume 311, 2017, Pages 61-90
\bibitem[MS00]{MS} A. Milinski, H.-J. Schneider, Pointed indecomposable Hopf algebras over Coxeter groups, Contemporary Mathematics, Volume 267, 2000, Pages 215-236, 
\bibitem[Mov93]{Movshev} V. Movshev, Twisting in group algebras of finite groups , Funktsional. Anal. i Prilozhen. Volume 27, Number 4, 1993, Pages 17-23 (Russian). English translation in: Funct. Anal.Appl Volume 27, Number 4, 1993, Number 4, 1994, Pages 240-244
\bibitem[Mon93]{Montgomery} S. Montgomery, Hopf algebras and their actions on rings, CBMS Regional Conference Series in Mathematics 82, 1993
\bibitem[Mon09]{Montgomery2} S. Montgomery, Hopf Galois theory: A survey, Geometry \& Topology Monographs Volume 16, 2009, Pages 367-400
\bibitem[New78]{Newstead} P. E. Newstead, Introduction to moduli problems and orbit spaces. Published for the Tata Institute of Fundamental Research, Bombay, Springer Verlag, 1978 
\bibitem[Ost03]{Ostrik} V. Ostrik, Module categories, weak Hopf algebras and modular invariants, Transformation Groups, Volume 8, Number 2, 2003, Pages 177-206
\bibitem[Pro76]{Procesi} C. Procesi, The Invariant Theory of n x n Matrices, Advances in Mathematics Volume 19, 1976, Pages 306-381
\bibitem[Scha96]{Schauenburg1} P. Schauenburg, Hopf bigalois extensions, Communications in Algebra, 24(12), 3797-3825 (1996)
\bibitem[Schn90]{Schneider} H.-J. Schneider, Principal homogeneous spaces for arbitrary Hopf algebras , Israel J.Math. Volume 72, 1990, Pages 167-195
\bibitem[Wey]{Weyl} H. Weyl, The Classical Groups: Their Invariants and Representations, Princeton University Press, Reprint edition, 1997

\end{thebibliography}
\end{document}